\documentclass[11pt,letterpaper]{article}

\usepackage{algpseudocode}
\usepackage{algorithm}
\usepackage{amsfonts}
\usepackage{amsmath}
\usepackage{amssymb}
\usepackage{amstext}
\usepackage{amsthm}
\usepackage[toc,page]{appendix}
\usepackage{authblk}
\usepackage{booktabs}
\usepackage{cite}
\usepackage{datetime}
\usepackage{enumitem}
\usepackage{etoolbox}
\usepackage{framed}
\usepackage{fullpage}
\usepackage[margin=0.7in]{geometry}
\usepackage{graphics}
\usepackage{graphicx}
\usepackage[latin1]{inputenc}
\usepackage{lineno}
\usepackage{mathrsfs}
\usepackage{multirow}
\usepackage{pdflscape}
\usepackage{pgfplots}
\usepackage{rotating}
\usepackage{setspace}
\usepackage{subcaption}
\usepackage{tikz}
\usepackage{times}
\usepackage{units}
\usepackage{url}
\usepackage{xcolor}
\usepackage{hyperref}
\usepackage[font=small]{caption}

\apptocmd{\sloppy}{\hbadness 10000\relax}{}{}


\usetikzlibrary{shapes,arrows,plotmarks,matrix,positioning,fit,calc,3d}


\newcommand{\iid}{\stackrel{\mathrm{iid}}{\sim}}

\newtheorem{theorem}{Theorem}[section]
\newtheorem{lemma}[theorem]{Lemma}
\newtheorem{proposition}[theorem]{Proposition}
\newtheorem{corollary}[theorem]{Corollary}

\newtheorem{mydef}[theorem]{Definition}

\DeclareMathOperator*{\esssup}{ess\,sup}

\DeclareMathOperator*{\Glim}{\Gamma\text{-}\lim}
\def\eps{\epsilon}

\def\({\left(}
\def\({\right)}

\newcounter{num}
\setcounter{num}{0}
\renewcommand{\thenum}{\arabic{num}}



\newlength\figureheight
\newlength\figurewidth

\begin{document}
\title{Asymptotic Analysis of the Ginzburg-Landau Functional on Point Clouds}
\author[1]{Matthew Thorpe}
\author[2]{Florian Theil}
\affil[1]{Carnegie Mellon University, Pittsburgh, PA 15213, USA}
\affil[2]{University of Warwick, Coventry, CV4 7AL, UK}
\date{November 2016}
\maketitle

\begin{abstract}
The Ginzburg-Landau functional is a phase transition model which is suitable for classification type problems.
We study the asymptotics of a sequence of Ginzburg-Landau functionals with anisotropic interaction potentials on point clouds $\Psi_n$ where $n$ denotes the number data points.
In particular we show the limiting problem, in the sense of $\Gamma$-convergence, is related to the total variation norm restricted to functions taking binary values; which can be understood as a surface energy.
We generalize the result known for isotropic interaction potentials to the anisotropic case and add a result concerning the rate of convergence.
\end{abstract}

\section{Introduction \label{sec:Intro}}

\subsection{Finite Dimensional Modeling \label{subsec:Intro:Finite}}

In the age of `big data' the mathematical modeller is often without a physical model and instead uses a data driven approach for which graphical models are a powerful tool.
Graphical based modelling techniques are used across a very broad spectrum of problems from social science type problems, such as identifying communities~\cite{porter09,danon05,vangennip13,wasserman94,fortunato10}, to image segmentation~\cite{huiyi12,bertozzi12}, to cell biology~\cite{caldarelli07}, to modelling the world wide web~\cite{boccaletti06,caldarelli07,broder00,faloutsos99} and many more.
Anisotropic models, studied in this paper, have found applications in cosmological models~\cite{hennawi07,linder14}, modelling outbreaks of disease~\cite{li12} and image recognition~\cite{worsley99}.

Graphical models are based upon pairwise similarities which practitioners can design based on expert knowledge.
The measure of similarity (on pairs) then defines a geometry (on a data set).
The motivation in this paper is to consider a graphical approach to the classification problems.
Given a measure of similarity we wish to define a labelling using the geometry of the graph.

The problem is given data $\Psi_n=\{\xi_i\}_{i=1}^n\subset X$, where $X\subset \mathbb{R}^d$, find $\mu: \Psi_n\to \mathbb{R}$ that labels each data point.
The labelling is constructed so that $\mu(\xi_i)=0$ means that $\xi_i$ is associated with the class 0 and $\mu(\xi_i)=1$ means that $\xi_i$ is associated with the class 1.
For a finite number of observations we allow a soft classification however the scaling is chosen such that in the data rich limit classifiers are binary valued.
The motivation for our approach is to validate approximating the hard classification problem by a soft classification problem.
The soft classification problem is in general numerically easier~\cite{garcia15} and therefore more appealing to the practitioner.
However one also wants to be precise in regards to which class a data point belongs.
Minimizers of the Ginzburg-Landau functional are used as a classification tool~\cite{vangennip12a} in order to allow for phase transitions which allow a soft classification approach whilst also penalizing states that are not close to a hard classification.

Another important application for this work is in designing classifiers.
By not assuming that the model is isotropic we allow greater flexibility which allows one to choose some features as more important than others.
The next subsection contains a simple example which shows how the design choice can affect the classification.

Assessing the validity of such an approach is of high importance.
This is especially true as there is no natural link between the data generating process and the choice of classifier.
In particular we argue that although using the Ginzburg-Landau functional is a good choice due to its phase transition properties it is by no means the only available option.
When one can make a connection between classifier and data generating process, e.g. maximum likelihood estimators, then this link motivates the methodology.
Without such connection one needs to do more, such as show the estimators have desirable properties, in order to justify the approach.
Other approaches that use classifiers that are detached from the data generating process include~\cite{thorpe15} where the authors prove the convergence of the $k$-means method using similar variational techniques.

An important criterion for validating the model is the behaviour in the large data limit.
When increasing the size of the data set one should expect to see stability in classifiers.
In particular this requires convergence in the large data limit and the existence of a limiting (data  rich) model.
When one has a data generating model, i.e. there is some truth, then one can talk about consistency.
In the situation considered in this paper there is no truth so instead we use solutions to the limiting model.
Knowledge of the limiting model gives an insight into what features one should expect for estimates from the finite data problem.
In particular this paper considers three questions:
\begin{enumerate}
\item[(P1)] Do estimated classifiers $\mu_n$ converge as $n\to \infty$?
\item[(P2)] Can we attach some meaning to any limit of $\mu_n$?
I.e. does there exists a limiting problem?
\item[(P3)] Can we characterize the rate of convergence of estimators?
\end{enumerate}
The primary results of this paper concern the first two questions.
It is shown that estimates $\mu_n$ converge to the solution of a limiting problem.
Furthermore solutions to the limiting problem are binary valued which means we expect estimates $\mu_n$ for large $n$ to be approximately hard classifiers.
For the third question we give some preliminary results into characterizing the rate in a simplified example.
We believe these results will hold under more generality than stated here and it is the objective of ongoing work to extend them.

\begin{figure}
\centering
\includegraphics[width=6cm,height=6cm]{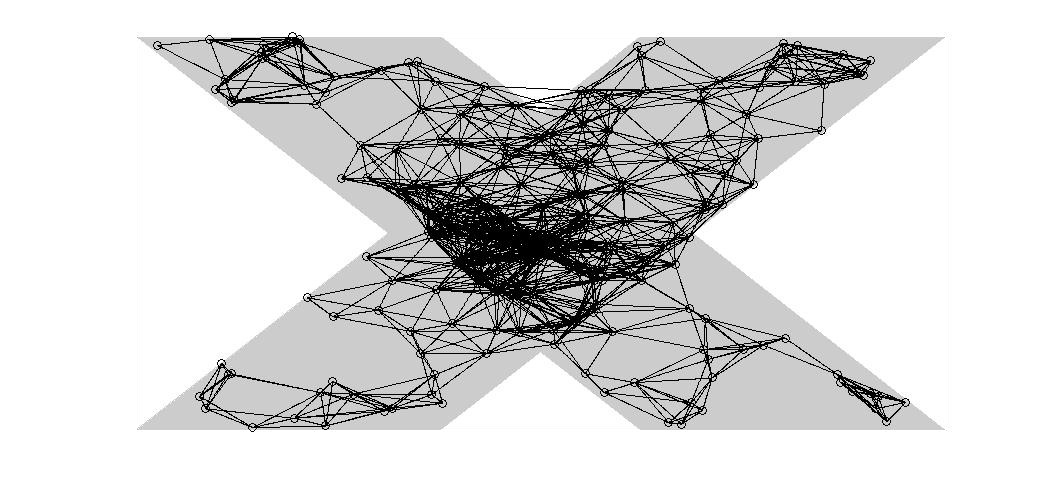}
\caption{An example graph. For the classifier estimates see Figure~\ref{fig:Intro:Finite:Minimizers}.}\label{fig:Intro:Finite:ExampleGraph}
\end{figure}

Our approach is motivated by~\cite{alberti98,trillos15,vangennip12a}.
Classifiers are constructed as the solution of a variational problem which is common in statistical problems, e.g. maximum likelihood and maximum-a-posterior problems.
In particular minimizers of the Ginzburg-Landau functional, a phase transition model popular in material science and image segmentation, are used as classifiers.
In the context of the 2-class classification problem the two phases are the classes and the phase transition corresponds to the set $\left\{ x \in \Psi_n \, : \, \mu_n(x) \in (c,1-c) \right\}$ for some $c\in (0,1)$, e.g. $c=0.1$.
This is the subset of $\Psi_n$ that are not strongly associated with either class.

Classifiers $\mu_n:\Psi_n\to \mathbb{R}$ are constructed as follows.
Let $V:\mathbb{R}\to [0,\infty)$ be a potential such that states taking the value 0 or 1 is favoured.
For example $V(t) = t^2(t-1)^2$.
A graph is constructed by taking the vertices as the set $\Psi_n=\{\xi_i\}_{i=1}^n \subset \mathbb{R}^d$ and weighting edges
\begin{equation} \label{eq:Intro:Finite:W}
W_{ij} = \eta_\epsilon(\xi_i-\xi_j) := \frac{1}{\epsilon^d} \varphi\circ \pi\left(\frac{\xi_i-\xi_j}{\epsilon}\right)
\end{equation}
where $\pi: X\to \mathbb{R}$ is a given one-dimensional map so that $\pi(\xi_i)$ represents a feature of $\xi_i$ and $\varphi:\mathbb{R}\to [0,\infty)$ penalizes the difference $\pi(\xi_i-\xi_j)$.
We say that there is an edge between $\xi_i$ and $\xi_j$ if $W_{ij}>0$, for example see Figure~\ref{fig:Intro:Finite:ExampleGraph}.
For a function $\mu_n$ on $\Psi_n$ the graph energy $\mathcal{E}_n(\mu_n) \in [0,\infty]$ is defined by
\begin{equation} \label{eq:Intro:Finite:En}
\mathcal{E}_n(\mu_n) = \frac{1}{\epsilon_n} \frac{1}{n} \sum_{i=1}^n V(\mu_n(\xi_i)) + \frac{1}{\epsilon_n} \frac{1}{n^2} \sum_{i,j} W_{ij} |\mu_n(\xi_i)-\mu_n(\xi_j)|.
\end{equation}
Our classifier is given as the minimizer of~$\mathcal{E}_n$.

We call the map $\pi:X\to \mathbb{R}$ the feature projection as it allows the practitioner to include feature selection of the data $\xi_i$. 
For example one may decide that two data points should be considered similar based on the pairwise difference.
In this case an appropriate choice would be the weighted Euclidean distance $\pi(x) = \sqrt{\sum_{i=1}^d w_i|x_i|^2}$.
The isotropic case would correspond to weights $w_i=1$.
Other choices could be to include correlations between dimensions, for example $\pi(x) = \sqrt{\sum_{i,j=1}^d w_{ij}|x_i||x_j|}$.

The authors of~\cite{trillos15} study the asymptotic properties of the graph total variation defined by
\begin{equation} \label{eq:Intro:Finite:TVG}
GTV_n(\mu_n) := \frac{1}{\epsilon_n} \frac{1}{n^2} \sum_{i,j} W_{ij} |\mu_n(\xi_i)-\mu_n(\xi_j)|
\end{equation}
when $W_{ij}$ is isotropic, i.e. $\pi(x) = |x|$.
In the special case that $\mu_n(\xi_i)\in\{0,1\}$ this reduces to the graph cut of $\Psi_n$, i.e. if $(\mu_n)^{-1}(0)=A_0$ and $(\mu_n)^{-1}(1) = A_1$ then
\[ GTV_n(\mu_n) = \frac{1}{\epsilon_n} \frac{1}{n^2} \sum_{\xi_i\in A_0} \sum_{\xi_j\in A_1} W_{ij}. \]
In particular the authors in~\cite{trillos15} show the $\Gamma$-convergence of $GTV_n$ to a weighted total variation $TV(\cdot;\rho,\eta)$ given by
\[ TV(\mu;\rho,\eta) := \sigma_\eta \sup \left\{ \int_X \mu \mathrm{div}(\phi) \, \mathrm{d} x \, : \, |\phi(x)| \leq \rho^2(x) \, \forall x \in X, \phi\in C_c^\infty(X;\mathbb{R}^d) \right\}, \]
and $L^1$-compactness for any sequence $\mu_n$ with $\sup_n (GTV_n(\mu_n) + \|\mu_n\|_{L^1}) < \infty$.

We wish to allow for soft classification and the total variation term alone is not enough to be able to do this informatively.
The classification approach is made more robust by including a first order term $V:\mathbb{R}\to [0,\infty)$ which penalizes associating a data point to more than one class.
See, for example, Figure~\ref{fig:Intro:Finite:Minimizers} for a comparison.
It is not trivial that the convergence results in~\cite{trillos15} will survive adding a penalty term.

\begin{figure}
\centering
\begin{subfigure}[b]{0.3\textwidth}
\includegraphics[width=4cm,height=4cm]{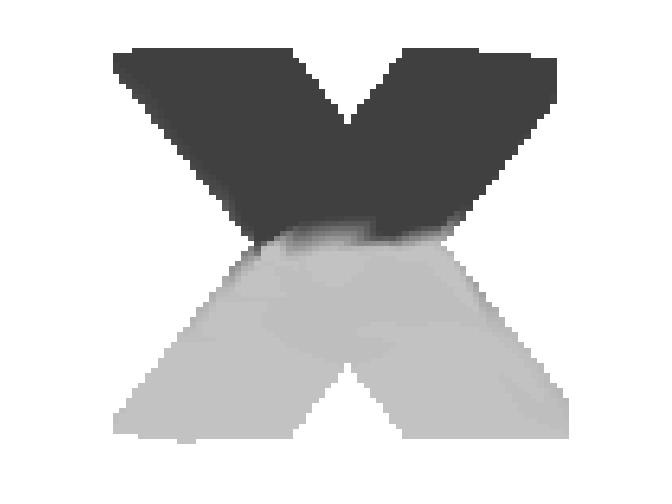}
\end{subfigure}
~
\begin{subfigure}[b]{0.3\textwidth}
\includegraphics[width=4cm,height=4cm]{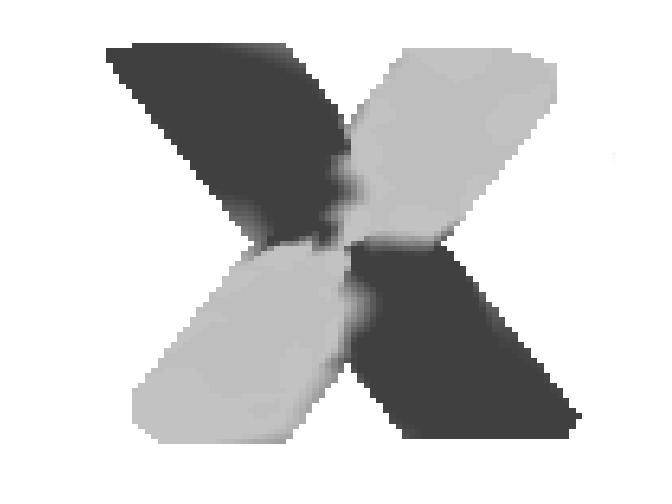}
\end{subfigure}

\begin{subfigure}[b]{0.3\textwidth}
\includegraphics[width=4cm,height=4cm]{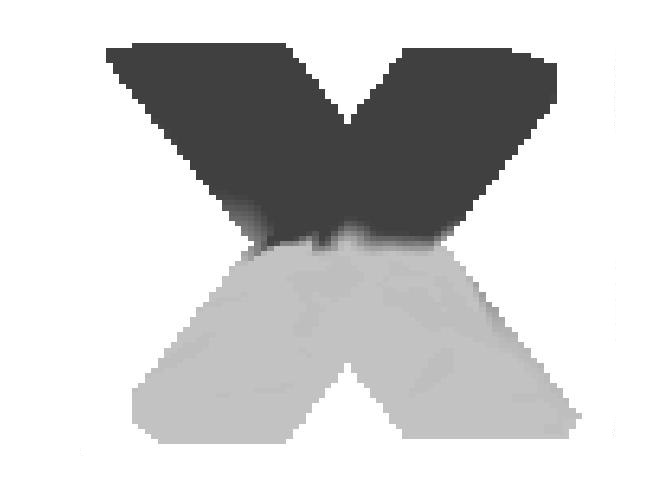}
\end{subfigure}
~
\begin{subfigure}[b]{0.3\textwidth}
\includegraphics[width=4cm,height=4cm]{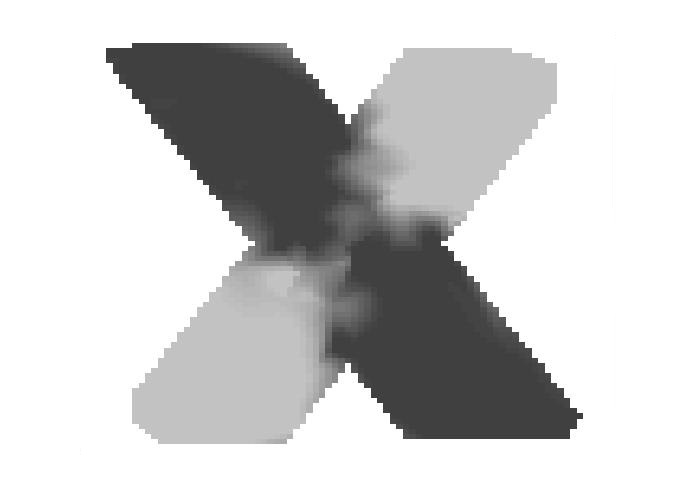}
\end{subfigure}
\caption{
The top row shows the minimizers of $\mathcal{E}_n$ and the bottom row shows the minimizers of $GTV_n$ for the graph given in Figure~\ref{fig:Intro:Finite:ExampleGraph} conditioned on the node closest to each corner taking either 0 or 1.
The left column is conditioned to have 0 in the bottom corners and 1 in the top corners.
The right column has 0 in the bottom left and top right corners and 1 in the top left and bottom right corners. There is very little difference between the outputs on the left but on the right the $GTV_n$ term fails to pick out the singularity at the center.
}
\label{fig:Intro:Finite:Minimizers}
\end{figure}

Finding minimizers of $\mathcal{E}_n$ is also an important problem but is not addressed in this paper.
We instead refer to \cite{bresson12,bresson13} for numerical methods.

\subsection{Example: Classification Dependence on the Choice of \texorpdfstring{$\eta$}{eta} \label{subsec:Intro:Ex}}

Through a toy problem we demonstrate how the interaction potential can be used to pick out features of the practitioners choice.
Data are points $\xi_i=(\xi_{1i},\xi_{2i})\in\mathbb{R}^2$ generated from four classes.
For a fixed $\alpha$ the feature projection $\pi:\mathbb{R}^2 \to [0,\infty)$ is defined by the weighted Euclidean norm
\[ \pi(\xi) = \sqrt{(1-\alpha) \xi_1^2 + \alpha \xi_2^2}. \]
For $\alpha\approx 1$ the classifiers are dominated by differences in the first coordinate whilst for $\alpha\approx 0$ the classifiers are dominated by differences in the second coordinate.
More precisely, let
\begin{align*}
\mu_1(\xi_i) & = \left\{ \begin{array}{ll} 1 & \text{if } \xi_{2i} \leq c_1 \\ 0 & \text{otherwise} \end{array} \right. \\
\mu_2(\xi_i) & = \left\{ \begin{array}{ll} 1 & \text{if } \xi_{1i} \leq c_2 \\ 0 & \text{otherwise.} \end{array} \right.
\end{align*}
Then define
\[ \Delta\mathcal{E}_n = \mathcal{E}_n(\mu_1) - \mathcal{E}_n(\mu_2). \]
The results are given in Figure~\ref{fig:Intro:Ex:Data}.

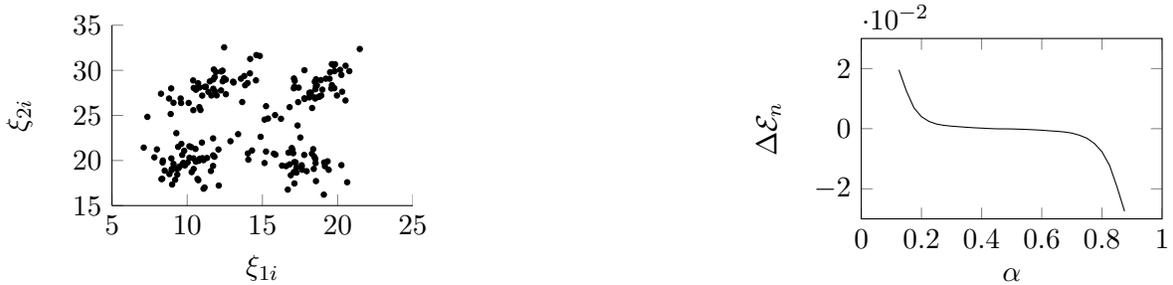
\begin{figure}[ht]
\centering
\begin{subfigure}[b]{0.45\textwidth}
\setlength\figureheight{2.4cm}
\setlength\figurewidth{4cm}
\centering
%
%
%
%
\begin{tikzpicture}

\begin{axis}[%
width=\figurewidth,
height=\figureheight,
scale only axis,
xmin=5,
xmax=25,
ymin=15,
ymax=35,
xlabel={$\xi_{1i}$},
ylabel={$\xi_{2i}$},
axis x line*=bottom,
axis y line*=left
]
\addplot [
color=black,
mark size=1.0pt,
only marks,
mark=*,
mark options={solid},
forget plot
]
table[row sep=crcr]{
10.1454775351944 21.5784029858823\\
9.85198513615763 19.4584366862316\\
10.7634520251769 19.9640338486672\\
7.81763632208053 20.3506499044257\\
8.50527524449953 18.8749666590756\\
11.7195440966756 19.3971157804164\\
14.0637714188745 20.0925821183902\\
8.99832629991091 17.3406294783807\\
9.20109404275693 17.8625567954474\\
11.085114247581 16.8714828097818\\
9.802819619058 21.0797717495514\\
8.28709570661618 17.9427109751534\\
9.28617537853287 19.1167812519677\\
11.0855751642994 20.0410169835516\\
10.9771901189658 21.9909366251986\\
12.0977034219099 17.2233816663058\\
9.67091685286115 20.6202912927611\\
7.98721575286231 21.2326098752796\\
9.66452098393983 19.5230921352996\\
11.7222836406613 22.4535032979681\\
10.2674261154024 21.4821332030238\\
10.4294688513925 19.061758115426\\
8.94877976282042 20.2337068122422\\
11.0139240261232 20.2585277622563\\
8.97010865966609 18.932433771544\\
8.36394034039859 19.7823486604429\\
10.7400447905052 17.8170193440694\\
8.98261102695603 19.0735518863846\\
7.11098347048211 21.4330347455332\\
11.7232028632219 20.5735063788355\\
10.0315573746835 19.7786152259513\\
9.47499553736306 19.3131829294177\\
9.27935134288355 23.0318503233304\\
10.147528843465 20.3037720459748\\
8.34142086229608 17.9650971862204\\
11.5751079653239 18.8360249031262\\
9.09063595869931 19.6193947165925\\
12.8748992437319 22.1379445710451\\
11.1563054829493 17.0073055066412\\
10.4199823276651 20.240294526587\\
8.38238504911821 19.9971028890555\\
12.0760027834058 21.2200130751659\\
11.8113179540804 20.4173078390204\\
10.5714787683188 19.8739444501002\\
9.39521216456401 21.5141685053032\\
9.73915589016471 19.7376660516104\\
8.81676978741258 18.496677145791\\
10.5510915452835 21.2843179362588\\
9.34201332567616 18.4424586052826\\
9.61419937532222 21.8412376316449\\
14.3093366683885 21.1114845435313\\
10.6835910528828 17.953729973938\\
13.3839769427905 22.9215890334553\\
11.3187369489854 20.2821387731203\\
14.0317867095482 20.7838967251802\\
9.42572416403748 19.1510461268531\\
10.8417983162711 20.200862569104\\
10.2980952748618 18.7097251388024\\
};
\addplot [
color=black,
mark size=1.0pt,
only marks,
mark=*,
mark options={solid},
forget plot
]
table[row sep=crcr]{
17.7806131651598 20.6171857413157\\
20.2505140749207 19.487292581389\\
16.8848305354838 18.3661909441054\\
18.5598673173893 17.7184505096065\\
16.8049649488618 19.572077155056\\
17.2643978369117 18.6661681529019\\
18.2887464685663 20.7584181520254\\
17.2681309457703 19.2048716570789\\
16.949816131694 21.3818488336435\\
19.1058258255468 19.7869464494887\\
17.2049322801205 19.7993710862117\\
16.5733108355867 19.3758575739082\\
17.0931212840095 18.7231823583871\\
17.0000136702987 21.1785033071587\\
16.3036257484632 19.4346892567944\\
20.6360225771026 17.5888740904175\\
19.0825616349992 16.2159785262911\\
17.1160186270963 19.8158797095813\\
18.4495555200491 20.4124193487677\\
15.8646357672253 20.6463611258962\\
18.5153732845244 20.1949966510702\\
19.1531427677943 19.3717876010168\\
16.6877785811333 16.7720337004001\\
16.7339851200373 20.8718875825303\\
14.8810836476469 22.6273749766853\\
18.4138449924647 19.0689682354616\\
17.6468459319147 18.9700666909013\\
17.9422794922584 18.8127786812278\\
17.1279359791893 17.4572938042914\\
15.7615907021083 20.7604140051346\\
15.3719498759988 24.6557784946368\\
17.6123195505466 19.3451871787195\\
15.1406228106664 19.7210886508643\\
17.0464619515085 20.8103740785287\\
18.3763776340932 21.258739396522\\
19.3338394766166 19.9104528694136\\
15.239202925877 20.9922204478614\\
18.5326948975688 19.7087063706826\\
19.4072284848048 18.9657319026549\\
17.6077548206476 19.4773997101316\\
};
\addplot [
color=black,
mark size=1.0pt,
only marks,
mark=*,
mark options={solid},
forget plot
]
table[row sep=crcr]{
11.1605910992255 28.1645252091497\\
14.1703532648574 29.6655435842378\\
12.5688091799319 28.9674213655699\\
10.9986182378446 27.1975217303461\\
11.6081353329437 27.2101703260863\\
12.450251527146 29.1168286438823\\
10.7269706122044 28.5630778293036\\
14.1874403922719 31.2641161642562\\
14.8261429077116 31.5956828771226\\
13.8533337157247 28.3460043246556\\
14.5705770344632 28.8884171416211\\
9.08973691808539 26.4043959984731\\
10.5796539804762 27.8465603153562\\
13.573252380519 29.0732424346368\\
13.0506542549449 28.7410734733499\\
9.59491743021193 26.3742964043527\\
11.8571903567053 27.6346389386906\\
10.8833394481904 25.6037478058056\\
10.3958335036265 28.8793083676942\\
12.2493040448237 27.7746229203608\\
11.733384363975 27.9599901324445\\
12.5733865322654 27.349069181293\\
12.4570419278169 32.5444162431666\\
14.5964570578476 31.6940594533981\\
10.4327358287395 28.0484498062901\\
12.3573849075125 28.7831123526512\\
10.4087105986419 25.5809722972505\\
11.91032624415 29.8154650113357\\
11.4993028102215 28.7717777040771\\
11.7701628533659 30.0936631984305\\
9.5649855021066 26.8751415583119\\
11.2400576495297 28.1843686064285\\
8.90309089184985 25.155908244\\
13.7951704452538 29.3727861824897\\
10.5800570479441 27.9705987657784\\
10.8180562467814 28.0965079611493\\
13.6565235520518 26.4822871675204\\
8.25884459870477 27.4060752619959\\
13.0828888465913 28.6415133612451\\
12.263563139052 29.8767703923367\\
11.8260037437765 29.2377044268489\\
8.79304861703094 26.8749864425802\\
14.0114237674355 28.5870388830581\\
10.80689715426 25.9181581904307\\
10.0963591123123 26.3857532716755\\
11.7336101618052 28.9919499693471\\
7.3549459886941 24.8393027830815\\
12.3490555723476 29.9680429763778\\
8.93593973975636 27.9921176410208\\
11.9560383017487 27.2246975431968\\
11.4091020919906 27.6122260826375\\
};
\addplot [
color=black,
mark size=1.0pt,
only marks,
mark=*,
mark options={solid},
forget plot
]
table[row sep=crcr]{
17.1257817171839 28.1867039993688\\
18.7655926945835 27.0834646255691\\
20.2490263709527 29.4892156756865\\
17.8013600616297 26.8250191488877\\
18.9839681901111 27.858861262298\\
19.9536571487301 27.2075398553868\\
18.5124818660683 26.8730148236953\\
17.0871359537003 28.007347151919\\
17.1479302198705 28.8270607590784\\
17.3736992122506 26.4720755011583\\
19.799848570299 29.9125132749391\\
16.2357834814862 24.6248506840159\\
16.8093313519223 25.915953753735\\
20.3019459163098 27.6280416938398\\
18.4828233161308 28.7330886942083\\
17.5203404474172 22.5490437057833\\
20.1660930163897 30.0534101142384\\
19.4963834353542 29.7892462058974\\
15.2418297308721 26.0236119450022\\
19.6182609201962 28.3597549222108\\
17.3367152286835 23.8846295429462\\
18.3060130009379 25.8199829167711\\
18.6212572741508 28.3383885491028\\
18.119447145721 26.9952007051027\\
20.7766540709307 29.9131924670913\\
19.5786527840887 27.9829169079694\\
18.1223407946924 27.5176222406577\\
17.408195279139 28.0783877939683\\
20.5225488003892 26.6579164186885\\
17.0943681480896 29.0971154615656\\
19.8287382661435 30.674403193106\\
18.4583819391316 26.9954220007696\\
18.1462201504098 27.3213183560846\\
17.7891710982061 26.8938963939522\\
15.8407795161202 25.0380612163376\\
18.5277361827434 27.9081964373673\\
19.4640523897363 29.0727010180685\\
15.1429806925411 24.5341763252867\\
21.4709565540875 32.3665639623753\\
19.2178210505242 29.0735520116656\\
19.6129669575273 30.6902304412629\\
19.7789474681808 28.0171382353566\\
18.759298806136 27.0541012586718\\
20.5297089692343 30.5073527284578\\
18.5639422121781 29.0077621438769\\
18.9108473269518 27.2027017581201\\
18.8835842994777 28.9126905021993\\
18.5558000962727 28.2676928380379\\
19.7448655839102 30.5572861321937\\
18.223898342154 27.5334965903209\\
17.7926929340664 30.0123416120173\\
};
\end{axis}
\end{tikzpicture}%
\end{subfigure}
\hfill
\begin{subfigure}[b]{0.45\textwidth}
\setlength\figureheight{2.4cm}
\setlength\figurewidth{4cm}
\centering
%
%
%
%
\begin{tikzpicture}

\begin{axis}[%
width=\figurewidth,
height=\figureheight,
scale only axis,
xmin=0,
xmax=1,
ymin=-0.03,
ymax=0.03,
xlabel={$\alpha$},
ylabel={$\Delta\mathcal{E}_n$}
]
\addplot [
color=black,
solid,
forget plot
]
table[row sep=crcr]{
0.125 0.019575\\
0.15 0.0125375\\
0.175 0.0069375\\
0.2 0.003975\\
0.225 0.002425\\
0.25 0.0015375\\
0.275 0.0011\\
0.3 0.0008375\\
0.325 0.00065\\
0.35 0.0004625\\
0.375 0.0003\\
0.4 0.0001875\\
0.425 5e-005\\
0.45 -5e-005\\
0.475 -6.25e-005\\
0.5 -8.75000000000001e-005\\
0.525 -0.0001625\\
0.55 -0.0002625\\
0.575 -0.000375\\
0.6 -0.0005125\\
0.625 -0.0007125\\
0.65 -0.0008625\\
0.675 -0.0011\\
0.7 -0.001475\\
0.725 -0.002175\\
0.75 -0.0031875\\
0.775 -0.00495\\
0.8 -0.0076875\\
0.825 -0.0121625\\
0.85 -0.0193875\\
0.875 -0.0274625\\
};
\end{axis}
\end{tikzpicture}%
\end{subfigure}
\caption{
The figure on the left shows a realisation of the data.
The interactions are parameterized by a potential $\alpha$ which favours horizontal partitions for $\alpha\approx 0$ and vertical partitions for $\alpha\approx 1$ as shown by the figure on the right.
}
\label{fig:Intro:Ex:Data}
\end{figure}

\subsection{The Limiting Model \label{subsec:Intro:Infinite}}

Rather surprisingly the problem of soft classifications for finite data sets and hard classification in the limit has received relatively little attention in the literature.
However it is well known that for finite data one can recover the $k$-means algorithm (hard classification) from the expectation-maximization algorithm (soft classification) in the zero-variance limit for the Gaussian mixture model and the Dirichlet process mixture model~\cite{kulis12,MacKay02}.

The results of this paper concern the asymptotics of the minimum and minimizers of $\mathcal{E}_n$, where $\epsilon_n\to 0$ as $n\to \infty$.
The advantages of scaling $\epsilon_n$ to zero are two-fold.
The first is that the matrix $W=(W_{ij})_{ij}$ is sparse and therefore we expect the minimization to be numerically less expensive than solving the minimization with a non-sparse matrix (since the sparse minimization has $O(n)$ terms and the non-sparse minimization $O(n^2)$ terms).
The second is to improve resolution of the boundary.
One can think of soft classification as estimating the probability that a data point belongs to a certain class and the hard classification problem as estimating the boundaries where one class is more likely than all others.
By scaling $\epsilon_n\to 0$ it will be shown that the limiting minimization problem is a hard classification.
For example, Figure~\ref{fig:Intro:Infinite:ExampleRes} shows (for a fixed number of data points) improved resolution in the boundary between classes as $\epsilon\to 0$.
See also~\cite{garcia15}.

Assume $X\subset\mathbb{R}^d$ and define $\mathcal{E}_\infty:L^1(X) \to [0,\infty]$ by
\begin{equation} \label{eq:Intro:Infinite:EinftySurface}
\mathcal{E}_\infty(\mu) = \left\{ \begin{array}{ll} \int_{\partial \{\mu = 1\}} \sigma(n(x)) \rho^2(x) \; \mathrm{d} \mathcal{H}^{d-1}(x) & \text{if } \mu\in L^1(X;\{0,1\}) \\ \infty & \text{otherwise} \end{array} \right.
\end{equation}
where $n(x)$ is the outward unit normal for the set $\partial\{\mu=1 \}$, $\mathcal{H}^{d-1}$ is the $d-1$ dimensional Hausdorff measure and
\[ \sigma(\nu) = \int_{\mathbb{R}^d} \varphi(\pi(x)) |x\cdot \nu| \; \mathrm{d} x. \]
It will be shown, in the sense of $\Gamma$-convergence, that $\mathcal{E}_\infty$ is the limiting problem and any sequence such that $\mathcal{E}_n(\mu_n)$ is bounded is precompact in an appropriate topology.
In particular this allows one to apply the results of this paper to infer the consistency of the constrained minimization problem (see Section~\ref{subsec:MainRes:Comments}).

\begin{figure}[ht]
\centering
\begin{subfigure}[b]{0.45\textwidth}
\setlength\figureheight{3cm}
\setlength\figurewidth{5cm}
\centering
%
%
%
%
\begin{tikzpicture}

\begin{axis}[%
width=\figurewidth,
height=\figureheight,
ticks=none,
scale only axis,
xmin=-5,
xmax=10,
ymin=-3,
ymax=4.5
]
\addplot [
color=black,
mark size=1.2pt,
only marks,
mark=*,
mark options={solid,fill=black},
forget plot
]
table[row sep=crcr]{
-4.61354441224525 0.832361464977937\\
-4.54881155210922 2.33882718654275\\
-4.25967345950225 0.0472303796587409\\
-4.14762514910441 -0.606895785658696\\
-4.08577066491708 -0.572165580214948\\
-4.06808555702868 -0.627206745334295\\
-4.03507907733803 1.10874732208355\\
-4.02253474133478 0.511976715695957\\
-3.9342951843913 0.0540746740583324\\
-3.90117871279631 -0.198846543738207\\
-3.82523307079921 0.613628133702281\\
-3.69051836443586 0.285213162034563\\
-3.68924436738692 -0.30263416245437\\
-3.66969410761964 0.658313598849679\\
-3.66117712704449 0.50847272370766\\
-3.51176945994596 0.884055764125925\\
-3.49836008009677 0.573189789389545\\
-3.49756408290977 0.24965429488233\\
-3.49389821329317 -0.453011823665195\\
-3.46645778778749 -0.43191487028127\\
-3.44954259960669 0.274294775641414\\
-3.38822063543571 -0.428028710934555\\
-3.28169317130559 0.606384741137854\\
-3.26688872999568 -0.301550997877725\\
-3.24634463268615 -0.989576112558169\\
-3.24547085566336 -0.665737286673611\\
-3.16527852996571 -0.430819866430574\\
-3.1543548164043 -0.113540629871013\\
-3.11804108897505 -1.90445541188568\\
-2.88367283508239 -2.31342226966951\\
-2.81461036898861 -0.0820649512189811\\
-2.76489631667491 -2.36529251516757\\
-2.74844806272599 -1.69163694142652\\
-2.73017639168586 -0.120274492152151\\
-2.72511768890796 -0.872462560090612\\
-2.65823637881099 -1.35161740327127\\
-2.60162911799044 -1.48491033439179\\
-2.51486429723368 -1.88432095929566\\
-2.44967043071795 -0.686518692680404\\
-2.28980223639312 -1.84942503650549\\
-1.96109579090123 -1.11766905155629\\
-1.55984742445426 -1.80570196376781\\
-1.38058367023737 -0.81496684519942\\
-1.35110851510594 -1.72294323809533\\
-1.35040130194246 -1.88045294243807\\
-1.32476774225315 -0.94185512694711\\
-1.13702734064839 -1.99898029011068\\
-0.87569691456725 -1.64643189167766\\
-0.792546802171743 0.927470716364932\\
-0.782094888031326 -2.38613280533269\\
-0.757045633897663 -1.80944351463797\\
-0.742456846377258 1.00064601529753\\
-0.613573299782455 -1.79779284226345\\
-0.59736714216632 -1.27156278870325\\
-0.595053446060927 0.205626055795326\\
-0.26702064033175 -2.01640471908406\\
-0.258269276567031 -0.227321988455058\\
-0.231462476864986 -1.40576339109543\\
-0.225007912239916 0.917381037592895\\
-0.2035262114635 0.472563188958718\\
-0.142836243607826 0.699012244270581\\
-0.118578543837518 -2.10046356679899\\
-0.0850627575387945 0.535823687738308\\
-0.0569059727056669 1.29229284102957\\
-0.0443805835823826 1.01356195274905\\
0.0217493401691951 -2.42131223805036\\
0.0556254878497774 -2.67579091357577\\
0.176610229060615 -1.81939992389389\\
0.246426045365254 0.0453455468413395\\
0.400045803621939 -2.34387699958838\\
0.407841782737313 -0.735970192237221\\
0.409155255286757 -1.73555908452236\\
0.558051976873671 0.482009371219448\\
0.570170142612265 -1.95687142280684\\
0.576601723681316 1.82260825280297\\
0.601367667417972 1.05067371619356\\
0.703449672681064 -1.71329554997515\\
0.70829433216618 2.10637467493826\\
0.823815329150159 0.150926972994904\\
0.940833017176902 2.02186407760649\\
0.963345889998623 1.86468692814079\\
0.970933992683241 -1.61064556920354\\
0.97977162590643 1.08351323187414\\
0.999773100667601 0.281582345406385\\
1.01455707268897 -2.08625617196336\\
1.09857231360013 1.74619610246882\\
1.1474789991681 -1.98294166220577\\
1.1683686578735 -2.40328734112304\\
1.20259879491837 0.991333437576832\\
1.20260742815256 1.01455147754742\\
1.23158588820431 -2.06952648375271\\
1.28936203003942 0.647415032315287\\
1.29956123250541 -1.83093608204321\\
1.32258212082176 1.639244137448\\
1.38340455002 -1.33640371085686\\
1.39985712333535 2.59684196520614\\
1.42035041816762 -1.68190192170078\\
1.44482528258127 -1.71375569317539\\
1.45434798552229 2.80325907303559\\
1.52415651738386 2.26955173451384\\
1.53933824118 3.24329529372174\\
1.56634979666714 -2.78698074897775\\
1.74984154025752 2.67959489804003\\
1.75180730820424 2.16217199136233\\
1.75771628513257 -1.42499917050143\\
1.81076707621864 2.22241262325692\\
1.81228716644463 2.63009388363283\\
1.86186433660041 1.326027550149\\
1.94776174921112 -0.911198419798203\\
1.95886691295846 2.17805845882628\\
1.96065405347892 1.68623908859185\\
1.97206433544139 -0.419737293592665\\
2.00264474528859 1.95756479254959\\
2.00811445757024 -0.636671205249424\\
2.12957593159012 2.05185853068469\\
2.16927105035048 2.20880088648508\\
2.22125854066778 2.46151997545208\\
2.22898457998684 3.14408526839907\\
2.36720378416185 -2.16218729511017\\
2.50419056777272 -1.85006833245121\\
2.5506901347904 -1.79656560017092\\
2.66104875789974 -0.923211461363686\\
2.77381224971741 -1.54055290370376\\
2.78527901429027 -2.35915787319209\\
2.79845809699079 -0.99347915854763\\
2.82710411064881 -1.42446045982968\\
2.96796484955628 -0.514714395626115\\
2.9765965705947 -0.832412983299476\\
2.98386567561932 -1.1319082875363\\
3.18707066381269 -1.59831255486527\\
3.34240508316767 -0.642979565853876\\
3.36989661320392 -2.19855122603605\\
3.52795771556165 -0.344737281859396\\
3.60908642938297 -0.468705081104389\\
3.633767159227 -0.0869142258942954\\
3.64771292428967 -0.497706106123618\\
3.95845704333806 1.13902836386786\\
4.28787671559419 -0.370877049007023\\
4.49126911480613 0.124886576788119\\
4.66094246599578 -0.101959576394049\\
};
\addplot [
color=black,
mark size=1.8pt,
only marks,
mark=star,
mark options={solid,fill=black},
forget plot
]
table[row sep=crcr]{
2.45603243675726 2.14223371667259\\
2.47263600229526 2.30561717061105\\
2.50001407489092 3.41467918245149\\
2.51201734019518 2.88636026055057\\
2.57452967337 3.04420267891004\\
2.66389728441031 3.33443664615167\\
2.74003545321369 2.55385062151143\\
2.76134559029887 2.85282058105898\\
2.81121681946537 2.80553950462336\\
2.83572205803198 3.50613140212989\\
2.93240882151738 1.83736382860606\\
2.95132933142285 2.3278344455669\\
2.97277820250871 3.42309358452525\\
3.18645016580519 2.86323708636668\\
3.19811168025026 2.96087040305759\\
3.20054379707007 3.36426028996155\\
3.44766754218778 2.97045094070978\\
3.55044549349746 3.3423108678542\\
3.61298802756715 2.7752481026087\\
3.92649364725662 3.13464773722323\\
3.9794804007129 2.81162138452037\\
4.04509796766267 3.60837271199117\\
4.12499974343306 1.62489259889907\\
4.35808048608049 3.43174442116082\\
4.44396947553451 2.85446396177875\\
4.45773123419463 3.44001274992716\\
4.73455689780449 2.91882837111999\\
4.81734434256348 3.86936184372023\\
4.96004616443754 1.72614981742001\\
4.98667973197624 3.97646656229831\\
5.04568317333892 2.23721918395813\\
5.05981415143811 2.20753070727994\\
5.12236460770113 3.60747870867204\\
5.15237865634964 2.82130203550387\\
5.41602718861991 3.33100760724794\\
5.42865942692448 1.95127872445285\\
5.43827704576723 3.12553281220753\\
5.57622007137191 2.96826909834262\\
5.68128203534513 2.54864699512814\\
6.03404824227742 2.76085103256797\\
6.13077378268931 2.04770237298951\\
6.17929908295531 2.03049570430471\\
6.24036143751548 1.94961218002873\\
6.40477683273998 2.4129031637066\\
6.43831673674194 2.74000274893336\\
6.44553206804616 2.02409871028665\\
6.57574145554378 1.90285788942662\\
6.58398613539796 2.8180262829879\\
6.67846304010799 1.41254229685518\\
6.81474153085136 2.19611237042717\\
7.16769976074083 -0.634077314465187\\
7.24591500177456 0.666216881473127\\
7.2761859737209 1.59219088719393\\
7.82280986002478 -0.623220681132199\\
7.90474172856017 0.560531847162946\\
7.9889359716979 1.22281060405386\\
8.11648444609119 1.30419362716364\\
8.24979006935024 -0.21430144040196\\
8.67134935353465 0.21550873570508\\
9.11363291113906 1.25915837644523\\
};
\end{axis}
\end{tikzpicture}%
\end{subfigure}
\hfill
\begin{subfigure}[b]{0.45\textwidth}
\centering
\setlength\figureheight{3cm}
\setlength\figurewidth{5cm}
%
%
%
%
\begin{tikzpicture}

\begin{axis}[%
width=\figurewidth,
height=\figureheight,
ticks=none,
scale only axis,
xmin=-5,
xmax=10,
ymin=-3,
ymax=4.5
]
\addplot [
color=black,
mark size=1.2pt,
only marks,
mark=*,
mark options={solid,fill=black},
forget plot
]
table[row sep=crcr]{
-4.61354441224525 0.832361464977937\\
-4.54881155210922 2.33882718654275\\
-4.25967345950225 0.0472303796587409\\
-4.14762514910441 -0.606895785658696\\
-4.08577066491708 -0.572165580214948\\
-4.06808555702868 -0.627206745334295\\
-4.03507907733803 1.10874732208355\\
-4.02253474133478 0.511976715695957\\
-3.9342951843913 0.0540746740583324\\
-3.90117871279631 -0.198846543738207\\
-3.82523307079921 0.613628133702281\\
-3.69051836443586 0.285213162034563\\
-3.68924436738692 -0.30263416245437\\
-3.66969410761964 0.658313598849679\\
-3.66117712704449 0.50847272370766\\
-3.51176945994596 0.884055764125925\\
-3.49836008009677 0.573189789389545\\
-3.49756408290977 0.24965429488233\\
-3.49389821329317 -0.453011823665195\\
-3.46645778778749 -0.43191487028127\\
-3.44954259960669 0.274294775641414\\
-3.38822063543571 -0.428028710934555\\
-3.28169317130559 0.606384741137854\\
-3.26688872999568 -0.301550997877725\\
-3.24634463268615 -0.989576112558169\\
-3.24547085566336 -0.665737286673611\\
-3.16527852996571 -0.430819866430574\\
-3.1543548164043 -0.113540629871013\\
-3.11804108897505 -1.90445541188568\\
-2.88367283508239 -2.31342226966951\\
-2.81461036898861 -0.0820649512189811\\
-2.76489631667491 -2.36529251516757\\
-2.74844806272599 -1.69163694142652\\
-2.73017639168586 -0.120274492152151\\
-2.72511768890796 -0.872462560090612\\
-2.65823637881099 -1.35161740327127\\
-2.60162911799044 -1.48491033439179\\
-2.51486429723368 -1.88432095929566\\
-2.44967043071795 -0.686518692680404\\
-2.28980223639312 -1.84942503650549\\
-1.96109579090123 -1.11766905155629\\
-1.55984742445426 -1.80570196376781\\
-1.38058367023737 -0.81496684519942\\
-1.35110851510594 -1.72294323809533\\
-1.35040130194246 -1.88045294243807\\
-1.32476774225315 -0.94185512694711\\
-1.13702734064839 -1.99898029011068\\
-0.87569691456725 -1.64643189167766\\
-0.782094888031326 -2.38613280533269\\
-0.757045633897663 -1.80944351463797\\
-0.613573299782455 -1.79779284226345\\
-0.59736714216632 -1.27156278870325\\
-0.26702064033175 -2.01640471908406\\
-0.231462476864986 -1.40576339109543\\
-0.118578543837518 -2.10046356679899\\
0.0217493401691951 -2.42131223805036\\
0.0556254878497774 -2.67579091357577\\
0.176610229060615 -1.81939992389389\\
0.400045803621939 -2.34387699958838\\
0.409155255286757 -1.73555908452236\\
0.570170142612265 -1.95687142280684\\
0.703449672681064 -1.71329554997515\\
0.970933992683241 -1.61064556920354\\
1.01455707268897 -2.08625617196336\\
1.1474789991681 -1.98294166220577\\
1.1683686578735 -2.40328734112304\\
1.23158588820431 -2.06952648375271\\
1.29956123250541 -1.83093608204321\\
1.38340455002 -1.33640371085686\\
1.42035041816762 -1.68190192170078\\
1.44482528258127 -1.71375569317539\\
1.56634979666714 -2.78698074897775\\
1.75771628513257 -1.42499917050143\\
1.94776174921112 -0.911198419798203\\
1.97206433544139 -0.419737293592665\\
2.00811445757024 -0.636671205249424\\
2.36720378416185 -2.16218729511017\\
2.50419056777272 -1.85006833245121\\
2.5506901347904 -1.79656560017092\\
2.66104875789974 -0.923211461363686\\
2.77381224971741 -1.54055290370376\\
2.78527901429027 -2.35915787319209\\
2.79845809699079 -0.99347915854763\\
2.82710411064881 -1.42446045982968\\
2.96796484955628 -0.514714395626115\\
2.9765965705947 -0.832412983299476\\
2.98386567561932 -1.1319082875363\\
3.18707066381269 -1.59831255486527\\
3.34240508316767 -0.642979565853876\\
3.36989661320392 -2.19855122603605\\
3.52795771556165 -0.344737281859396\\
3.60908642938297 -0.468705081104389\\
3.633767159227 -0.0869142258942954\\
3.64771292428967 -0.497706106123618\\
4.28787671559419 -0.370877049007023\\
4.49126911480613 0.124886576788119\\
4.66094246599578 -0.101959576394049\\
};
\addplot [
color=black,
mark size=1.8pt,
only marks,
mark=star,
mark options={solid,fill=black},
forget plot
]
table[row sep=crcr]{
-0.792546802171743 0.927470716364932\\
-0.742456846377258 1.00064601529753\\
-0.595053446060927 0.205626055795326\\
-0.258269276567031 -0.227321988455058\\
-0.225007912239916 0.917381037592895\\
-0.2035262114635 0.472563188958718\\
-0.142836243607826 0.699012244270581\\
-0.0850627575387945 0.535823687738308\\
-0.0569059727056669 1.29229284102957\\
-0.0443805835823826 1.01356195274905\\
0.246426045365254 0.0453455468413395\\
0.407841782737313 -0.735970192237221\\
0.558051976873671 0.482009371219448\\
0.576601723681316 1.82260825280297\\
0.601367667417972 1.05067371619356\\
0.70829433216618 2.10637467493826\\
0.823815329150159 0.150926972994904\\
0.940833017176902 2.02186407760649\\
0.963345889998623 1.86468692814079\\
0.97977162590643 1.08351323187414\\
0.999773100667601 0.281582345406385\\
1.09857231360013 1.74619610246882\\
1.20259879491837 0.991333437576832\\
1.20260742815256 1.01455147754742\\
1.28936203003942 0.647415032315287\\
1.32258212082176 1.639244137448\\
1.39985712333535 2.59684196520614\\
1.45434798552229 2.80325907303559\\
1.52415651738386 2.26955173451384\\
1.53933824118 3.24329529372174\\
1.74984154025752 2.67959489804003\\
1.75180730820424 2.16217199136233\\
1.81076707621864 2.22241262325692\\
1.81228716644463 2.63009388363283\\
1.86186433660041 1.326027550149\\
1.95886691295846 2.17805845882628\\
1.96065405347892 1.68623908859185\\
2.00264474528859 1.95756479254959\\
2.12957593159012 2.05185853068469\\
2.16927105035048 2.20880088648508\\
2.22125854066778 2.46151997545208\\
2.22898457998684 3.14408526839907\\
2.45603243675726 2.14223371667259\\
2.47263600229526 2.30561717061105\\
2.50001407489092 3.41467918245149\\
2.51201734019518 2.88636026055057\\
2.57452967337 3.04420267891004\\
2.66389728441031 3.33443664615167\\
2.74003545321369 2.55385062151143\\
2.76134559029887 2.85282058105898\\
2.81121681946537 2.80553950462336\\
2.83572205803198 3.50613140212989\\
2.93240882151738 1.83736382860606\\
2.95132933142285 2.3278344455669\\
2.97277820250871 3.42309358452525\\
3.18645016580519 2.86323708636668\\
3.19811168025026 2.96087040305759\\
3.20054379707007 3.36426028996155\\
3.44766754218778 2.97045094070978\\
3.55044549349746 3.3423108678542\\
3.61298802756715 2.7752481026087\\
3.92649364725662 3.13464773722323\\
3.95845704333806 1.13902836386786\\
3.9794804007129 2.81162138452037\\
4.04509796766267 3.60837271199117\\
4.12499974343306 1.62489259889907\\
4.35808048608049 3.43174442116082\\
4.44396947553451 2.85446396177875\\
4.45773123419463 3.44001274992716\\
4.73455689780449 2.91882837111999\\
4.81734434256348 3.86936184372023\\
4.96004616443754 1.72614981742001\\
4.98667973197624 3.97646656229831\\
5.04568317333892 2.23721918395813\\
5.05981415143811 2.20753070727994\\
5.12236460770113 3.60747870867204\\
5.15237865634964 2.82130203550387\\
5.41602718861991 3.33100760724794\\
5.42865942692448 1.95127872445285\\
5.43827704576723 3.12553281220753\\
5.57622007137191 2.96826909834262\\
5.68128203534513 2.54864699512814\\
6.03404824227742 2.76085103256797\\
6.13077378268931 2.04770237298951\\
6.17929908295531 2.03049570430471\\
6.24036143751548 1.94961218002873\\
6.40477683273998 2.4129031637066\\
6.43831673674194 2.74000274893336\\
6.44553206804616 2.02409871028665\\
6.57574145554378 1.90285788942662\\
6.58398613539796 2.8180262829879\\
6.67846304010799 1.41254229685518\\
6.81474153085136 2.19611237042717\\
7.16769976074083 -0.634077314465187\\
7.24591500177456 0.666216881473127\\
7.2761859737209 1.59219088719393\\
7.82280986002478 -0.623220681132199\\
7.90474172856017 0.560531847162946\\
7.9889359716979 1.22281060405386\\
8.11648444609119 1.30419362716364\\
8.24979006935024 -0.21430144040196\\
8.67134935353465 0.21550873570508\\
9.11363291113906 1.25915837644523\\
};
\end{axis}
\end{tikzpicture}%
\end{subfigure}
\caption{
Both figures were classified using the Ginzburg-Landau functional.
The one on the left used a larger $\epsilon$ than the one on the right.
The figure shows that the smaller value of $\epsilon$ gives a much better resolution in the boundary.
}
\label{fig:Intro:Infinite:ExampleRes}
\end{figure}
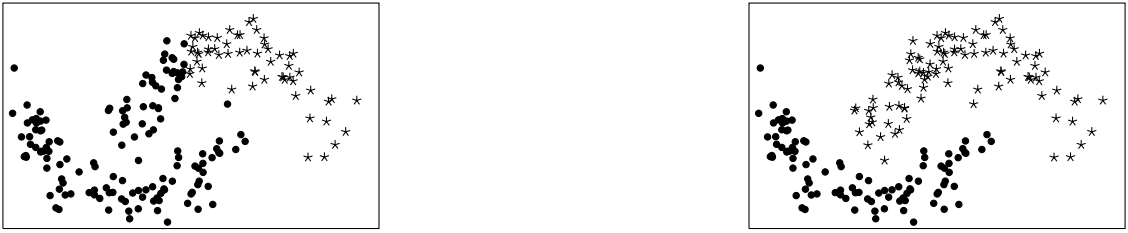

We now briefly discuss the convergence of $\mu_n\to \mu$.
Since each $\mu_n$ is defined on a different space (the domain of each $\mu_n$ is $\Psi_n$) it is not straightforward what is meant by convergence of $\mu_n\to \mu_\infty$.
By defining a map $T_n:X\to \Psi_n$ one can compare $\mu_n$ to $\mu_\infty$ by defining the piecewise constant approximation of $\mu_n$.
Formally we can say $\mu_n\to \mu_\infty$ in $TL^1$ if $\mu_n\circ T_n\to \mu_\infty$ in $L^1$, this is discussed further in Section~\ref{subsec:Prelim:Trans}.

We also include preliminary results towards characterizing the rate of convergence by considering a simple example when $\mu=\mathbb{I}_E$ for a polyhedral set $E\subset X$ and looking at the convergence in mean square:
\[ \mathbb{E} \left|\mathcal{E}_n(\mu) - \mathcal{E}_\infty(\mu)\right|^2 = \mathbb{E} \left|GTV_n(\mu) - TV(\mu;\rho,\eta)\right|^2. \]
We give an expansion of the above in terms of $\eps_n$ and $n$.
A further overview of these results is given in Section~\ref{subsec:MainRes:Rate} and the proofs in Section~\ref{sec:Rate}.

The outline of the paper is as follows.
In Section~\ref{sec:MainRes} the main result, Theorem~\ref{thm:MainRes:Cons} (the convergence of the unconstrained minimization problem), is given.
We also include an overview of the preliminary rate of convergence results to be found in Section~\ref{sec:Rate}.
Section~\ref{sec:Prelim} contains the background material and in particular: notation, a brief overview on $\Gamma$-convergence, background on total variation distances and the key details required from transportation theory.
In Section~\ref{sec:Compact} the proof of the first part of Theorem~\ref{thm:MainRes:Cons} (the compactness result) is given.
And in Section~\ref{sec:Gamma} the proof is completed with the $\Gamma$-convergence result.
Finally in Section~\ref{sec:Rate} we make the preliminary calculation regarding the rate of convergence of ``$\mathcal{E}_n\to \mathcal{E}_\infty$".

\section{Statement of Main Result and Assumptions \label{sec:MainRes}}

The assumptions on $V, \varphi$ and $\pi$ are given in the following definition.

\begin{mydef}
\label{def:MainRes:Admissible}
We say that the functions $(\rho,\epsilon_n,V,\varphi,\pi)$ where $\rho:\mathbb{R}^n\to[0,\infty)$, $V:\mathbb{R}\to [0,\infty)$, $\varphi:\mathbb{R}\to [0,\infty)$ and $\pi:\mathbb{R}^d\to \mathbb{R}$ are $\mathcal{E}_n$-admissible if the following conditions hold. 
\begin{enumerate}
\item The support of $\rho$ is $X$ where $X\subset\mathbb{R}^d$ is open, bounded, connected, Lipschitz boundary and $1\leq d <\infty$.
\item On $X$ we have that $\rho$ is a continuous probability density ($\int_X \rho(x) \, \mathrm{d} x = 1$) and bounded above and below by positive constants, i.e.
\[ 0< \inf_{x\in X} \rho(x) \leq \sup_{x\in X} \rho(x) < \infty. \]
\item $\epsilon_n^{-1} = o(f_d(n))$
where
\[ f_d(n) = \left\{ \begin{array}{ll} \sqrt{\frac{n}{\log\log n}} & \text{if } d=1 \\ \frac{\sqrt{n}}{(\log n)^{\frac{3}{2}}} & \text{if } d=2 \\ \sqrt[d]{\frac{n}{\log n}} & \text{if } d\geq 3. \end{array} \right. \]
\item The support of $\eta=\varphi\circ\pi$ is compact.
\item $\eta(0)>0$.
\item For all $\delta>0$ there exists $c_\delta,\alpha_\delta>0$ such that if $|x-y|<\delta$ then $\eta(y)\geq c_\delta \eta(\alpha_\delta x)$ and furthermore $c_\delta,\alpha_\delta\to 1$ as $\delta\to 0$.
\item $V(y)=0$ if and only if $y\in \{0,1\}$.
\item $V$ is continuous.
\item There exists $r>0$ and $\tau>0$ such that if $|t|\geq r$ then $V(t)\geq \tau |t|$.
\end{enumerate}
\end{mydef}

The lower bound on $\epsilon_n$ implies the graph with vertices $\Psi_n$ and edges weighted by $W_{ij}$ is (with probability one) eventually connected~\cite[Theorem~13.2]{penrose03}.

Note that a consequence of~4 and~6 is that $\sigma(\nu)<\infty$ however 4 also excludes any $\pi$ that is linear.
For example if $\pi(x) = w\cdot x$ then there exists $w'$ orthogonal to $w$ such that $w\cdot w' = 0$ (for $d\geq 2$).
Then $\pi(\alpha w') = 0$ for all $\alpha$ and in particular $\eta(\alpha w') = \eta(0) >0$.
Therefore the support of $\eta$ is not compact.
We discuss the linear case more in the following subsection.

Condition 6 gives the required scaling in $\varphi\circ\pi$.
If $T_n:X\to \Psi_n$ is a map such that $T_{n\#}P = P_n$ then the continuous approximation of the weights $W_{ij}$ reads as
\[ W_{ij} = \frac{1}{\epsilon_n^d} \varphi\circ\pi\left(\frac{\xi_i-\xi_j}{\epsilon_n}\right) \approx \frac{1}{\epsilon_n^d} \int_{T_n^{-1}(\xi_i)} \int_{T_n^{-1}(\xi_j)} \varphi\circ\pi\left(\frac{x-y}{\epsilon_n}\right) \; \mathrm{d} x \; \mathrm{d} y. \]
Condition 6 is sufficient to formalise this reasoning.
We give two different classes of functions in Proposition~\ref{prop:MainRes:SatAss} below that satisfy the assumptions.
The first is a subset of isotropic functions that we extend in Corollary~\ref{cor:MainRes:Isotropic}, the second is a set of indicator functions.

\begin{proposition}
\label{prop:MainRes:SatAss}
Assume that either
\begin{enumerate}
\item $\varphi:[0,\infty)\to [0,\infty)$ is decreasing, $\varphi(0)>0$, is Lipschitz with compact support and $\pi:\mathbb{R}^d\to [0,\infty)$ is defined by $\pi(x)=|x|$, or
\item $\varphi(0)>0$, $\varphi(1)=0$, $\pi:\mathbb{R}^d\to \{0,1\}$ is given by $\pi(x)=\mathbb{I}_{E^c}(x)$ for an open, bounded and convex set $E\subset\mathbb{R}^d$ with $0\in E$.
\end{enumerate}
Then $(\varphi,\pi)$ are $\mathcal{E}_n$-admissible.
\end{proposition}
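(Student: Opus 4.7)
The plan is to verify, in each of the two cases, the conditions in Definition~\ref{def:MainRes:Admissible} that concern $\varphi$ and $\pi$, namely 4 (compact support of $\eta=\varphi\circ\pi$), 5 ($\eta(0)>0$), and 6 (the approximate-monotonicity estimate). Conditions 4 and 5 are essentially bookkeeping: in case~1, $\eta(x)=\varphi(|x|)$ inherits compact support from $\varphi$ and $\eta(0)=\varphi(0)>0$; in case~2, $\eta = \varphi(0)\mathbb{I}_{E}$, which has support $\overline{E}$ (compact since $E$ is bounded) and $\eta(0)=\varphi(0)>0$ since $0\in E$. So the real work is condition~6.

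For case~1 I would set $\alpha_\delta = 1+\sqrt{\delta}$ and split on the size of $|x|$. If $|x|\geq\sqrt{\delta}$, then $\alpha_\delta|x| \geq |x|+\delta \geq |y|$, so monotonicity of $\varphi$ gives $\varphi(|y|)\geq \varphi(\alpha_\delta|x|)$ and the constant $c=1$ works. If $|x|<\sqrt{\delta}$, then $|y|<\sqrt{\delta}+\delta$ and $\alpha_\delta|x| < \sqrt{\delta}(1+\sqrt{\delta})$, and Lipschitz continuity of $\varphi$ yields the crude bound $\varphi(|y|)\geq \varphi(0)-L(\sqrt{\delta}+\delta)$ while $\varphi(\alpha_\delta|x|)\leq \varphi(0)$. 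Combining, the single choice $c_\delta = 1 - L(\sqrt{\delta}+\delta)/\varphi(0)$ works uniformly in $x$, and both $\alpha_\delta$ and $c_\delta$ tend to $1$ as $\delta\to 0$.

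For case~2, observe that $\eta$ only takes the values $\varphi(0)$ and $0$, so I only need to produce $\alpha_\delta\to 1$ such that $\alpha_\delta x\in E$ together with $|x-y|<\delta$ implies $y\in E$ (then $c_\delta=1$). Since $E$ is open and $0\in E$, pick $r>0$ with $B(0,r)\subset E$ and set $\alpha_\delta = r/(r-\delta)$, which tends to $1$. Given $\alpha_\delta x\in E$ and $|x-y|<\delta$, I write
\[
y = \frac{1}{\alpha_\delta}(\alpha_\delta x) + \left(1-\frac{1}{\alpha_\delta}\right) z, \qquad z = \frac{\alpha_\delta}{\alpha_\delta - 1}(y-x),
\]
and check that $|z| < \delta\cdot \alpha_\delta/(\alpha_\delta-1) = r$, so $z\in B(0,r)\subset E$. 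Convexity of $E$ then gives $y\in E$, as needed.

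The main obstacle is condition~6 in case~2: we cannot soften $\eta$ across $\partial E$ (it is discontinuous), so the whole burden is on showing that slightly dilating $x$ away from the origin creates enough ``room'' inside $E$ to absorb a perturbation of size $\delta$. The convex-combination argument above does this in one stroke, using only that $0\in E$ is an interior point. Case~1 is easier in spirit but needs the small-$|x|$ regime to be handled by Lipschitz continuity rather than monotonicity, since multiplicative dilation is a poor substitute for additive enlargement near the origin.
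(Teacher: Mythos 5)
Your proof is correct. For case~1 it is essentially the paper's argument: both split on the size of $|x|$, use monotonicity of $\varphi$ for $|x|$ bounded away from the origin (where a multiplicative dilation $\alpha_\delta|x|$ dominates the additive perturbation $|x|+\delta$) and Lipschitz continuity near the origin; your split point $\sqrt{\delta}$ plays the role of the paper's $N-\delta$, and the resulting constants $c_\delta,\alpha_\delta$ differ only cosmetically. For case~2, however, you take a genuinely different and noticeably cleaner route. The paper argues in the contrapositive direction: for $z\notin E$ near $\partial E$ and $x\in B(z,\delta)\cap E$ it constructs the radial scaling $\beta_x$ pushing $x$ onto $\partial E$, bounds $\beta_x-1=O(\delta)$ via the cosine rule applied to the triangle with vertices $0$, $z$, $\beta_x x$ and the inner/outer radii $m,M$ of $E$, and takes $\alpha_\delta=\sup\beta_x$ so that $\eta(\alpha_\delta x)=0$ whenever $\eta(z)=0$ could cause trouble. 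You instead prove the equivalent implication ``$\alpha_\delta x\in E$ and $|x-y|<\delta$ imply $y\in E$'' directly, by exhibiting $y$ as the convex combination $y=\alpha_\delta^{-1}(\alpha_\delta x)+(1-\alpha_\delta^{-1})z$ with $z=\frac{\alpha_\delta}{\alpha_\delta-1}(y-x)\in B(0,r)\subset E$, where $\alpha_\delta=r/(r-\delta)$. This avoids the trigonometric computation entirely, uses only that $0$ is an interior point of the convex set $E$, and yields the simpler bound $\alpha_\delta-1=\delta/(r-\delta)$ in place of the paper's $\frac{\delta}{m}\bigl((M/m)^{3/2}+1\bigr)$; the paper's computation buys nothing extra here beyond an explicit constant in different geometric terms. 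One shared caveat, not a flaw relative to the paper: both arguments verify condition~6 only for $\delta$ small (you need $\delta<r$, the paper needs $\delta<N$ and $\delta\le\delta^*$), which is the regime in which the condition is actually invoked in the convergence proofs.
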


We prove the above proposition in Appendix~\ref{app:ProofPropMainResSatAss}.
We now state the main result.
The key idea is that optimal transportation theory provides a natural extension of $\mu_n:\Psi_n\to \mathbb{R}$ to $\tilde{\mu}_n:X\to \mathbb{R}$ for which we can use to define the convergence $\mu_n\to \mu$.
This is formalised as convergence in $TL^1$, see Section~\ref{subsec:Prelim:Trans}.
Establishing $\Gamma$-convergence of functionals, see Definition~\ref{def:Prelim:Gamma:Gamcon}, and the compactness of minimizers leads to the convergence of minimizers as in Theorem~\ref{thm:Prelim:Gamma:Conmin}.
We use the notation $L^1(\Psi_n)$ as convenient notation for functions defined on~$\Psi_n$.
We define the space $B(X;\rho,\eta)$, the space of functions of bounded variation with respect to a measure $\rho$ and a interaction potential $\eta$, in Definition~\ref{def:Prelim:TV:TV}.

\begin{theorem}
\label{thm:MainRes:Cons}
Let $\mathcal{E}_n:L^1(\Psi_n)\to[0,\infty)$ and $\mathcal{E}_\infty:L^1(X)\to [0,\infty]$ be defined by~\eqref{eq:Intro:Finite:En} and~\eqref{eq:Intro:Infinite:EinftySurface} respectively.
If $(\rho,\epsilon_n,V,\varphi,\pi)$ are $\mathcal{E}_n$-admissible (in the sense of Definition~\ref{def:MainRes:Admissible}) and $\xi_i\iid \rho$
then, with probability one, the following hold
\begin{enumerate}
\item Compactness: Let $\mu_n$ be a sequence of functions on $\Psi_n$ such that $\sup_{n\in\mathbb{N}} \mathcal{E}_n(\mu_n)<\infty$ then $\mu_n$ is relatively compact in $TL^1$ and each cluster point is in $BV(X;\rho,\varphi\circ\pi)\cap L^1(X;\{0,1\})$.
\item $\Gamma$-limit: We have
\[ \Glim_{n\to \infty} \mathcal{E}_n = \mathcal{E}_\infty. \]
\end{enumerate}
\end{theorem}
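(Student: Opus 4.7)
The strategy is a classical three-step $\Gamma$-convergence program: compactness, a $\liminf$ inequality, and construction of a recovery sequence. Throughout, the bridge between $\mu_n$ defined on $\Psi_n$ and its limit on $X$ is a transport map $T_n:X\to \Psi_n$ with $T_{n\#}P=P_n$ and $\|T_n-\mathrm{id}\|_\infty=o(\epsilon_n)$ almost surely; condition~3 on $\epsilon_n$ is exactly what guarantees such maps exist via standard empirical optimal-transport estimates. Once this is fixed, $TL^1$-convergence $\mu_n\to\mu$ reduces to $L^1(X)$-convergence of $\mu_n\circ T_n$ to $\mu$, and the argument splits cleanly into controlling $GTV_n$ and the penalty $V$ separately.

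For compactness, I start with $\sup_n \mathcal{E}_n(\mu_n)\leq C$. Nonnegativity of the two terms gives $GTV_n(\mu_n)\leq C$ and $\frac{1}{n}\sum_i V(\mu_n(\xi_i))\leq C\epsilon_n\to 0$, while condition~9 upgrades the latter into an $L^1(P_n)$ bound on $\mu_n$. I would then establish an anisotropic version of the Garc\'ia Trillos--Slep\v cev $GTV$-compactness theorem, either directly or via the coarea identity $GTV_n(f)=\int_0^1 GTV_n(\mathbb{I}_{\{f>s\}})\,\mathrm{d}s$ (a consequence of the layer-cake formula for $|a-b|$) applied to the level sets. This yields a $TL^1$-cluster point $\mu$. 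Lower semicontinuity of $\nu\mapsto\int V(\nu)\rho\,\mathrm{d}x$ with respect to $TL^1$-convergence combined with $\frac{1}{n}\sum V(\mu_n(\xi_i))\to 0$ forces $\int V(\mu)\rho\,\mathrm{d}x=0$, so $\mu\in L^1(X;\{0,1\})$, and the $GTV_n$ bound places $\mu\in BV(X;\rho,\varphi\circ\pi)$.

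For the $\liminf$, take $\mu_n\to\mu$ in $TL^1$ with $\liminf\mathcal{E}_n(\mu_n)<\infty$ (the complementary case is trivial). Compactness forces $\mu\in L^1(X;\{0,1\})$, so $V(\mu)\equiv 0$ and $\mathcal{E}_\infty(\mu)$ reduces to the anisotropic surface energy. Since $V\geq 0$, it remains to show
\[
\liminf_{n\to\infty} GTV_n(\mu_n)\;\geq\;\int_{\partial\{\mu=1\}}\sigma(n(x))\rho^2(x)\,\mathrm{d}\mathcal{H}^{d-1}(x).
\]
I follow the blow-up strategy of~\cite{trillos15}: at $\mathcal{H}^{d-1}$-a.e.\ boundary point $x_0$ with normal $\nu(x_0)$, push $\mu_n$ through $T_n$, rescale by $\epsilon_n$, and use $\|T_n-\mathrm{id}\|_\infty/\epsilon_n\to 0$ together with continuity of $\rho$ to show the local contribution approaches $\rho^2(x_0)\int_{\mathbb{R}^d}\varphi(\pi(h))|h\cdot\nu(x_0)|\,\mathrm{d}h = \rho^2(x_0)\sigma(\nu(x_0))$. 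Where the isotropic proof uses radial symmetry, condition~6 (quantitative near-translation-invariance $\eta(y)\geq c_\delta \eta(\alpha_\delta x)$ for $|x-y|<\delta$, with $c_\delta,\alpha_\delta\to 1$) plays the same role and is exactly what is needed to pass local estimates through the rescaling.

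For the $\limsup$ (recovery sequence), set $\mu_n := \mu\circ T_n$. Since $\mu$ is $\{0,1\}$-valued, $V(\mu_n)\equiv 0$ and $\mathcal{E}_n(\mu_n)=GTV_n(\mu_n)$. I would prove
\[
\limsup_{n\to\infty} GTV_n(\mu\circ T_n)\;\leq\;\int_{\partial\{\mu=1\}}\sigma(n(x))\rho^2(x)\,\mathrm{d}\mathcal{H}^{d-1}(x)
\]
first for smooth or polyhedral $\partial\{\mu=1\}$ by Fubini in a tubular neighbourhood together with the definition of $\sigma$, and then extend to general $\mu\in BV(X;\rho,\eta)\cap L^1(X;\{0,1\})$ by density of polyhedra and a diagonal argument against the $\liminf$ already proved. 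The principal obstacle is the anisotropic $\liminf$: without radial symmetry the kernel $\eta=\varphi\circ\pi$ cannot be rearranged, and condition~6 must be leveraged with enough precision that the blow-up yields the \emph{exact} constant $\sigma(\nu(x_0))\rho^2(x_0)$ rather than merely an upper bound, all while absorbing the transport correction $\|T_n-\mathrm{id}\|_\infty = o(\epsilon_n)$ uniformly.
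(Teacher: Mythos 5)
Your overall architecture coincides with the paper's: transport maps $T_n$ with $\|T_n-\mathrm{Id}\|_{L^\infty}=o(\epsilon_n)$ (Theorem~\ref{thm:Prelim:Trans:TransMapBound} plus condition~3), compactness giving $\{0,1\}$-valued cluster points, a $\liminf$ inequality for $GTV_n$, and a recovery sequence built from the restriction of $\mu$ to $\Psi_n$, reduced to polyhedral functions by density and diagonalization. Within that frame you make two genuinely different choices. For compactness, the paper does not prove an ``anisotropic Garc\'ia Trillos--Slep\v cev theorem''; it bounds $\eta=\varphi\circ\pi$ from below by $a\,\mathbb{I}_{B(0,b)}$ (using $\eta(0)>0$ and condition~6), transfers the whole functional, $V$ term included, to the continuum energy $\mathcal{C}_{\tilde\epsilon_n}(\mu_n\circ T_n)$ at a slightly shrunken scale $\tilde\epsilon_n$, and invokes the Alberti--Bellettini compactness for that continuum functional, which delivers $L^1$ precompactness \emph{and} the $\{0,1\}$-valuedness of limits in one stroke. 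Your route (coarea plus an isotropic-type $GTV$ compactness, then lower semicontinuity of $\int V$ to force binary limits) is workable, but the one concrete ingredient you defer --- how to handle the anisotropy --- is precisely this lower bound by an isotropic kernel; without naming it, ``establish an anisotropic version directly'' is the soft spot of the proposal. For the $\liminf$, the paper does not use Fonseca--M\"uller blow-up: it changes variables to write $GTV_n$ as $\frac{1}{\epsilon_n}\int\!\!\int \eta(\cdot)\,|\nu_n(y+\epsilon_n z)-\nu_n(y)|\rho^2(y)\,\mathrm{d}z\,\mathrm{d}y$, uses condition~6 to replace the transported argument by $c_n\eta(\alpha_n z)$, and then applies Fatou in $z$ together with the directional difference-quotient bound $\liminf_n f_n(z)\geq TV_z(\mu;\rho)$ of Proposition~\ref{prop:Prelim:TV:Liminf}, integrating against $\eta(z)$ at the end; a separate two-step approximation handles merely continuous $\rho$. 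The blow-up method you propose can in principle reach the same conclusion, but it requires setting up localized energy measures and lower density estimates at reduced-boundary points, which is substantially more machinery than the Fatou-in-$z$ argument and is not sketched in enough detail to be checkable. Your $\limsup$ matches the paper's in substance (note only that the recovery sequence is the restriction $\mu|_{\Psi_n}$, not $\mu\circ T_n$, which is a function on $X$; and the paper's diagonalization does not need the $\liminf$ inequality). In short: correct skeleton, same endpoints, but the two technical pillars --- the isotropic lower-bound reduction for compactness and the difference-quotient $\liminf$ --- are replaced by sketches of harder alternatives.
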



The compactness result is proved in Section~\ref{sec:Compact} and the $\Gamma$-convergence result in Section~\ref{sec:Gamma}.

Proposition~\ref{prop:MainRes:SatAss} allows one to apply Theorem~\ref{thm:MainRes:Cons} to isotropic weights $W_{ij}$ when $\varphi$ is decreasing, $\varphi(0)>0$ and Lipschitz.
We now show that the Lipschitz assumption can be removed.

\begin{corollary}
\label{cor:MainRes:Isotropic}
Let $\mathcal{E}_n:L^1(\Psi_n)\to[0,\infty)$ and $\mathcal{E}_\infty:L^1(X)\to [0,\infty]$ be defined by~\eqref{eq:Intro:Finite:En} and~\eqref{eq:Intro:Infinite:EinftySurface} respectively.
If
\begin{enumerate}
\item[$\bullet$] $\rho$ satisfies conditions 1-3 in Definition~\ref{def:MainRes:Admissible}, 
\item[$\bullet$] $\xi_i\iid \rho$,
\item[$\bullet$] $\varphi:[0,\infty)\to [0,\infty)$ is decreasing, compactly supported, $\varphi(0)>0$ and continuous at 0,
\item[$\bullet$] $\pi(x)=|x|$, and
\item[$\bullet$] $V$ satisfies conditions 11-13 in Definition~\ref{def:MainRes:Admissible}
\end{enumerate}
then, with probability one, the conclusions of Theorem~\ref{thm:MainRes:Cons} hold.
\end{corollary}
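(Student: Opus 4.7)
The plan is to reduce Corollary~\ref{cor:MainRes:Isotropic} to Theorem~\ref{thm:MainRes:Cons} via a sandwich argument using Lipschitz upper and lower approximations of $\varphi$. Since $\varphi:[0,\infty)\to[0,\infty)$ is decreasing, compactly supported, continuous at $0$ and $\varphi(0)>0$, for each $k\in\mathbb{N}$ I would construct decreasing Lipschitz functions $\varphi_k^\pm$ with compact support, $\varphi_k^\pm(0)>0$, and $\varphi_k^-\leq \varphi \leq \varphi_k^+$ pointwise, with $\varphi_k^-\nearrow \varphi$ and $\varphi_k^+\searrow \varphi$ at every continuity point of $\varphi$ (hence almost everywhere, as a monotone function has only countably many discontinuities). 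Concretely one may take $\varphi_k^+$ to be the piecewise linear interpolant through the nodes $(j/k,\varphi((j-1)/k))$ and $\varphi_k^-$ analogously from below; monotonicity of $\varphi$ ensures both are decreasing, Lipschitz, compactly supported, and sandwich $\varphi$. By Proposition~\ref{prop:MainRes:SatAss}(1) the tuples $(\rho,\epsilon_n,V,\varphi_k^\pm,|\cdot|)$ are $\mathcal{E}_n$-admissible for all sufficiently large $k$, so Theorem~\ref{thm:MainRes:Cons} applies to the corresponding functionals $\mathcal{E}_{n,k}^\pm$, yielding $\Gamma$-convergence in $TL^1$ to $\mathcal{E}_{\infty,k}^\pm$ with surface tensions $\sigma_k^\pm(\nu)=\int_{\mathbb{R}^d}\varphi_k^\pm(|x|)\,|x\cdot \nu|\,\mathrm{d}x$.

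Because the first-order term in $\mathcal{E}_n$ does not involve $\varphi$, the pointwise domination gives the sandwich $\mathcal{E}_{n,k}^- \leq \mathcal{E}_n \leq \mathcal{E}_{n,k}^+$ for all $k,n$. For compactness I fix any $k_0$ with $\varphi_{k_0}^-(0)>0$: a sequence $\mu_n$ with $\sup_n\mathcal{E}_n(\mu_n)<\infty$ also has $\sup_n\mathcal{E}_{n,k_0}^-(\mu_n)<\infty$, and the compactness half of Theorem~\ref{thm:MainRes:Cons} yields relative compactness in $TL^1$ with cluster points binary-valued and of bounded variation. For the $\Gamma$-liminf, given $\mu_n\to\mu$ in $TL^1$ and any $k$,
\[
\liminf_{n\to\infty}\mathcal{E}_n(\mu_n)\geq \liminf_{n\to\infty}\mathcal{E}_{n,k}^-(\mu_n) \geq \mathcal{E}_{\infty,k}^-(\mu),
\]
and I let $k\to\infty$: monotone convergence applied to $\sigma_k^-(\nu)$ (using $\varphi_k^-\nearrow\varphi$ almost everywhere and nonnegativity) gives $\sigma_k^-(\nu)\nearrow \sigma(\nu)$, and a second monotone convergence against the perimeter measure of $\{\mu=1\}$ yields $\mathcal{E}_{\infty,k}^-(\mu)\nearrow \mathcal{E}_\infty(\mu)$.

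For the $\Gamma$-limsup, given $\mu\in L^1(X;\{0,1\})$, for each $k$ pick a recovery sequence $\mu_n^{(k)}\to\mu$ in $TL^1$ for $\mathcal{E}_{n,k}^+$, so that $\limsup_n \mathcal{E}_n(\mu_n^{(k)})\leq \mathcal{E}_{\infty,k}^+(\mu)$. Dominated convergence (with integrable majorant $\varphi_1^+(|x|)\,|x|$ on the compact support of $\varphi_1^+\circ|\cdot|$) gives $\sigma_k^+(\nu)\searrow \sigma(\nu)$ and hence $\mathcal{E}_{\infty,k}^+(\mu)\searrow\mathcal{E}_\infty(\mu)$; a standard diagonal extraction $\mu_n^{(k(n))}$ then produces a single recovery sequence for $\mathcal{E}_n$. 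The main technical obstacle I foresee is verifying that each $\varphi_k^\pm$ genuinely meets the sixth admissibility condition on $\eta_k^\pm=\varphi_k^\pm\circ|\cdot|$ (for which the chosen piecewise linear approximants, being Lipschitz, continuous and strictly positive on a neighbourhood of the origin, work without additional care) and, relatedly, justifying that $\sigma_k^\pm(\nu)\to\sigma(\nu)$ passes through the integral over $\partial\{\mu=1\}$ uniformly enough; both points reduce to the observation that the monotonicity of $\varphi$ forces the convergence $\varphi_k^\pm\to\varphi$ to hold pointwise off a countable set, so that the integrands $\varphi_k^\pm(|x|)|x\cdot\nu|$ converge to $\varphi(|x|)|x\cdot\nu|$ Lebesgue-almost everywhere in $x$ for every fixed $\nu$.
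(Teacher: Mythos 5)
Your proposal is correct and follows essentially the same route as the paper's proof: sandwich $\varphi$ between monotone, Lipschitz, decreasing, compactly supported approximants, invoke Proposition~\ref{prop:MainRes:SatAss} and Theorem~\ref{thm:MainRes:Cons} for each approximant, and pass to the limit by monotone convergence. If anything, your pairing --- lower approximants $\varphi_k^-\leq\varphi$ for the $\liminf$ inequality and upper approximants $\varphi_k^+\geq\varphi$ (together with the explicit diagonal extraction) for the recovery sequence --- is the correct orientation of the inequalities, which the paper's own write-up presents with the two approximation directions interchanged.
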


We prove the corollary in Appendix~\ref{app:ProofCorMainResIsotropic}

\subsection{Convergence of the Ginzburg-Landau Functional for Linear Feature Projections \label{subsec:MainRes:Linear}}

After Definition~\ref{def:MainRes:Admissible} we discussed how the compact support assumption on $\eta=\varphi\circ \pi$ did not allow for linear $\pi$.
One case that is of interest, that is not covered by Theorem~\ref{thm:MainRes:Cons} or Corollary~\ref{cor:MainRes:Isotropic} is $\pi(x) = x_i$ that corresponds to weighting the graph based on differences in one direction only.

This case is of particular interest for high dimensional data.
If the data comes from a high, potentially infinite, dimensional space then it becomes necessary to identify a finite number of principal dimensions upon which to define the edge weights.
Isotropic weights are unrealistic in high dimensions and infeasible in infinite dimensions due to the lack of integrability of $\eta$.
This motivates our study of linear $\pi$, which can include $\pi(x) = \sum_{i\in \mathcal{K}} x_i$ for a finite set $\mathcal{K}$.
Although we do not consider infinite dimensional data spaces here we believe the results of this section can be extended from the finite dimensional setting, albeit with a modified limit $\hat{\mathcal{E}}_\infty$ to the one we define in~\eqref{eq:MainRes:Linear:hatEinfty}.
With a more thorough treatment one should also be able to extend the convergence results of this subsection (i.e. to infinite dimensional spaces) to non-linear $\pi$.

The underlying problem, and why we should not expect a linear choice of $\pi$ to imply that $\mathcal{E}_\infty$ is well defined, is that it becomes harder to ensure that $\sigma(\nu)<\infty$.
In particular, we expect $\mathcal{E}_\infty\equiv +\infty$.
In many applications we anticipate that only a small number of dimensions are relevant and hence we, in this section, consider the classifier as a functional on a projected data space.
We define $\hat{\Psi}_n = \{\hat{\xi}_i\}_{i=1}^n \subset \mathbb{R}$ by $\hat{\xi}_i = \pi(\xi_i)$ and $\hat{\mu}_n:\hat{\Psi}_n\to[0,\infty)$.
We now consider the energy
\begin{equation} \label{eq:MainRes:Linear:hatEn}
\hat{\mathcal{E}}_n(\hat{\mu}_n) = \frac{1}{\epsilon_n} \frac{1}{n} \sum_{i=1}^n V(\hat{\mu}_n(\hat{\xi}_i)) + \frac{1}{\epsilon_n} \frac{1}{n^2} \sum_{i,j} \hat{W}_{ij} |\hat{\mu}_n(\hat{\xi}_i)-\hat{\mu}_n(\hat{\xi}_j)|
\end{equation}
where
\[ \hat{W}_{ij} = \frac{1}{\epsilon_n} \varphi\circ\pi\left(\frac{\xi_i-\xi_j}{\epsilon_n}\right) = \frac{1}{\epsilon_n} \varphi\left(\frac{\hat{\xi}_i-\hat{\xi}_j}{\epsilon_n}\right). \]
We point out that the scaling is with respect to $\epsilon_n$ rather than $\epsilon_n^d$ as in~\eqref{eq:Intro:Finite:W}.
In this case the $\Gamma$-limit $\hat{\mathcal{E}}_\infty:L^1(\mathbb{R})\to \mathbb[0,\infty]$ is given by
\begin{equation} \label{eq:MainRes:Linear:hatEinfty}
\hat{\mathcal{E}}_\infty(\hat{\mu}) = \hat{\sigma} \sum_{x\in\partial\{\hat{\mu}=1\}} \left( \int_{\pi^{-1}(x)} \rho(y) \; \mathrm{d} y \right)^2
\end{equation}
where
\[ \hat{\sigma} = \int_{\mathbb{R}} \varphi(x)|x| \; \mathrm{d} x. \]

The analogous assumptions are given in the definition below.

\begin{mydef}
\label{def:MainRes:Linear:Admissible}
We say that the functions $(\rho,\epsilon_n,V,\varphi,\pi)$ where $\rho:\mathbb{R}^d\to[0,\infty)$, $V:\mathbb{R}\to [0,\infty)$, $\varphi:\mathbb{R}\to [0,\infty)$ and $\pi:\mathbb{R}^d\to \mathbb{R}$ are $\hat{\mathcal{E}}_n$-admissible if the following hold.
\begin{enumerate}
\item The support of $\rho$ is $X$ where $X\subset\mathbb{R}^d$ and the support of $\pi_{\#} \rho$ is open, bounded and connected.
\item On $X$ we have that $\rho$ is a continuous probability density, bounded above and $\inf_{x\in X} \pi_{\#}\rho(x)>0$.
\item $\epsilon_n^{-1} = o(f_1(n))$ where $f_1$ is given in Definition~\ref{def:MainRes:Admissible}.
\item $\pi$ is linear.
\item The support of $\varphi$ is compact.
\item $\varphi(0)>0$.
\item For all $\delta>0$ there exists $c_\delta,\alpha_\delta>0$ such that if $|x-y|<\delta$ then $\varphi(y)\geq c_\delta \varphi(\alpha_\delta x)$ and furthermore $c_\delta,\alpha_\delta\to 1$ as $\delta\to 0$.
\item $V(y)=0$ if and only if $y\in \{0,1\}$.
\item $V$ is continuous.
\item There exists $r>0$ and $\tau>0$ such that if $|t|\geq r$ then $V(t)\geq \tau |t|$.
\end{enumerate}
\end{mydef}

The convergence theorem is given below.

\begin{theorem}
\label{thm:MainRes:Linear:Conshat}
Let $\hat{\Psi}_n=\{\hat{\xi}_i\}_{i=1}^n$ where $\hat{\xi}_i=\pi(\xi_i)$.
Define $\hat{\mathcal{E}}_n:L^1(\hat{\Psi}_n)\to[0,\infty)$ and $\hat{\mathcal{E}}_\infty:L^1(X)\to [0,\infty]$ by~\eqref{eq:MainRes:Linear:hatEn} and~\eqref{eq:MainRes:Linear:hatEinfty} respectively.
If $(\rho,\epsilon_n,V,\varphi)$ are $\hat{\mathcal{E}}_n$-admissible (in the sense of Definition~\ref{def:MainRes:Linear:Admissible}) and $\xi_i\iid \rho$ then, with probability one, the following hold
\begin{enumerate}
\item Compactness: Let $\hat{\mu}_n$ be a sequence of functions on $\hat{\Psi}_n$ such that $\sup_{n\in\mathbb{N}} \hat{\mathcal{E}}_n(\hat{\mu}_n)<\infty$ then $\hat{\mu}_n$ is relatively compact in $TL^1$ and each cluster point is in $BV(\mathbb{R};\pi_{\#}\rho,\eta)\cap L^1(\mathbb{R};\{0,1\})$.
\item $\Gamma$-limit: we have
\[ \Glim_{n\to \infty} \mathcal{\hat{E}}_n = \mathcal{\hat{E}}_\infty. \]
\end{enumerate}
\end{theorem}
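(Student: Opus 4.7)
The plan is to reduce Theorem~\ref{thm:MainRes:Linear:Conshat} to the one-dimensional instance of the machinery developed for Theorem~\ref{thm:MainRes:Cons}, exploiting the fact that linearity of $\pi$ turns the problem into a genuinely one-dimensional graph problem. Since $\pi$ is linear, the projected samples $\hat{\xi}_i = \pi(\xi_i)$ are i.i.d.\ from $\hat{\rho} := \pi_{\#}\rho$ on $\mathbb{R}$, and the weights $\hat{W}_{ij} = \epsilon_n^{-1}\varphi((\hat\xi_i-\hat\xi_j)/\epsilon_n)$ together with the $\epsilon_n^{-1}$ scaling of the $V$-term make $\hat{\mathcal{E}}_n$ the exact one-dimensional analogue of $\mathcal{E}_n$ with kernel $\varphi$, density $\hat\rho$, and the $d=1$ scaling $f_1(n)$. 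Conditions 1--3 of Definition~\ref{def:MainRes:Linear:Admissible} ensure that $\hat\rho$ is supported on an open bounded connected interval and bounded away from zero, which is all the density regularity the compactness/$\Gamma$-convergence scaffolding of Sections~\ref{sec:Compact}--\ref{sec:Gamma} will require in one dimension.

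For compactness, I would introduce a transport map $\hat{T}_n:\mathbb{R}\to\hat\Psi_n$ with $\hat T_{n\#}\hat\rho = \hat P_n$, concretely given in $1$D as the inverse of the empirical CDF composed with the CDF of $\hat\rho$. The scaling $\epsilon_n^{-1}=o(f_1(n))$ is calibrated precisely so that $\|\hat T_n-\mathrm{id}\|_\infty/\epsilon_n \to 0$ almost surely (Glivenko--Cantelli-type rates in one dimension, cf.\ the references cited after Definition~\ref{def:MainRes:Admissible}). Given $\hat\mu_n$ with bounded energy, define $\tilde\mu_n = \hat\mu_n\circ \hat T_n$; the bound $\epsilon_n^{-1}\int V(\tilde\mu_n)\,\dd\hat\rho \le \hat{\mathcal{E}}_n(\hat\mu_n)$ together with the coercivity condition~10 drives $\tilde\mu_n$ toward $\{0,1\}$-valued limits, while assumption~7 and the transport bound convert the discrete TV term into a continuous nonlocal TV on $\mathbb{R}$, from which standard $BV$ compactness yields $TL^1$ precompactness with cluster points in $BV(\mathbb{R};\hat\rho,\varphi)\cap L^1(\mathbb{R};\{0,1\})$.

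For the $\Gamma$-limit I would proceed exactly as in Section~\ref{sec:Gamma}. Liminf: discard the nonnegative $V$-term, replace the sum over $\hat\Psi_n$ by an integral against $\hat\rho\otimes\hat\rho$ using $\hat T_n$ and condition~7, and for a binary limit $\hat\mu=\mathbb{I}_A$ localize the blow-up at each jump $x\in\partial\{\hat\mu=1\}$ via $t=x+\epsilon_n\zeta$ to extract the factor $\hat\sigma = \int_{\mathbb{R}}\varphi(\zeta)|\zeta|\,\dd\zeta$ against $\hat\rho(x)^2$. Summing over boundary points gives $\hat\sigma\sum_x \hat\rho(x)^2$, which coincides with~\eqref{eq:MainRes:Linear:hatEinfty} after using the coarea formula to rewrite $\hat\rho(x) = \int_{\pi^{-1}(x)}\rho(y)\,\dd y$ (absorbing the $|\nabla\pi|$ Jacobian into the definition of the surface measure on the fiber). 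Limsup: take $\hat\mu=\mathbb{I}_A$ with $A$ a finite union of intervals as the canonical dense class, set $\hat\mu_n=\hat\mu|_{\hat\Psi_n}$, observe that the $V$-term vanishes, and compute directly that the graph-TV converges to the same sum; density in $BV(\mathbb{R};\{0,1\})$ then extends this to arbitrary binary targets by diagonalization.

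The hard part, which is not cosmetic, is checking that the one-dimensional arguments of Sections~\ref{sec:Compact}--\ref{sec:Gamma} really survive the weakening of assumption~2: the original definition requires $\rho$ continuous and uniformly bounded above and below, whereas $\hat\rho$ is only guaranteed to be bounded with positive infimum (its continuity is not assured, since pushforwards of continuous densities through linear maps can have corners where the geometry of $\partial X$ meets the level sets of $\pi$). At each point where the proofs of Theorem~\ref{thm:MainRes:Cons} invoke continuity of $\rho$ -- notably in passing the empirical TV to its continuous counterpart and in the blow-up identification of the surface tension -- one must verify that Lebesgue-point arguments and the almost everywhere regularity of $\hat\rho$ (together with boundedness) suffice. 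Similarly, the precise identification of the limit constants requires care in tracking the Jacobian $|\nabla\pi|$, which is absorbed into the normalization implicit in the notation $\int_{\pi^{-1}(x)}\rho(y)\,\dd y$ in~\eqref{eq:MainRes:Linear:hatEinfty}.
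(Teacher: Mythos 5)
Your proposal takes exactly the paper's route: the paper's entire proof of Theorem~\ref{thm:MainRes:Linear:Conshat} is the single sentence that it is ``an application of Theorem~\ref{thm:MainRes:Cons} to the 1-dimensional data set $\hat{\Psi}_n$,'' which is the reduction you carry out in detail. The subtlety you flag about the pushforward density $\pi_{\#}\rho$ possibly failing the continuity/upper-bound requirements of Definition~\ref{def:MainRes:Admissible} is a fair observation that the paper silently glosses over, but it does not change the fact that your argument and the paper's are the same.
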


The proof is an application of Theorem~\ref{thm:MainRes:Cons} to the 1-dimensional data set $\hat{\Psi}_n$.

\subsection{Comments on the Main Result \label{subsec:MainRes:Comments}}

The classical Ginzburg-Landau functional:
\[ F_\epsilon(\mu) = \epsilon\int_X |\nabla \mu(x)|^2 \; \mathrm{d} x + \frac{1}{\epsilon} \int_X V(\mu(x)) \; \mathrm{d} x \]
has been well studied and its convergence to a total variation functional
\[ F_\infty(\mu) = \sigma_V \int_X |\nabla \mu(x)| \; \mathrm{d} x \]
known for some time~\cite{modica87} and similar results for the anisotropic version~\cite{alberti98}.
More recent results have studied this functional on a (deterministic) regular graph.
In~\cite{vangennip12a} the authors show the $\Gamma$-convergence and compactness of two variants of the Ginzburg-Landau functional where $\{\xi_i\}_{i=1}^n\subset\mathbb{R}^2$ form a 4-regular graph.
Let us exploit the structure of the graph by writing data as $\{\xi_{i,j}\}_{i,j=1}^n$ where $\xi_{i,j}$, $\xi_{i,j+1}$ are neighbours, as are $\xi_{i,j}$ and $\xi_{j+1,i}$.
The two variants of the Ginzburg-Landau functional considered in~\cite{vangennip12a} are
\begin{align*}
h_{n,\epsilon}(\mu) & = \frac{1}{\epsilon} \sum_{i,j=1}^n V(\mu(\xi_{i,j})) + \frac{1}{n} \sum_{i,j=1}^n \left( \left| \mu(\xi_{i+1,j}) - \mu(\xi_{i,j}) \right|^2 + \left| \mu(\xi_{i,j+1}) - \mu(\xi_{i,j}) \right|^2 \right) \\
k_{n,\epsilon}(\mu) & = \frac{1}{\epsilon n^2} \sum_{i,j=1}^n V(\mu(\xi_{i,j})) + \epsilon \sum_{i,j=1}^n \left( \left| \mu(\xi_{i+1,j}) - \mu(\xi_{i,j}) \right|^2 + \left| \mu(\xi_{i,j+1}) - \mu(\xi_{i,j}) \right|^2 \right).
\end{align*}
The first functional $h_{n,\epsilon}$ $\Gamma$-converges as $\epsilon\to 0$ (for a fixed $n$) to a total variation function $h_{n,0}$ in a discrete setting defined by
\[ h_{n,0}(\mu) = \left\{ \begin{array}{ll} \frac{1}{n} \sum_{i,j=1}^n \left( \left| \mu(\xi_{i+1,j}) - \mu(\xi_{i,j}) \right|^2 + \left| \mu(\xi_{i,j+1}) - \mu(\xi_{i,j}) \right|^2 \right) & \text{if } \mu \in L^1(\Psi_n;\{0,1\}) \\ \infty & \text{otherwise.} \end{array} \right. \]
As $\epsilon\to 0$ and $n\to \infty$ sequentially or for $\epsilon = n^{-\alpha}$ for $\alpha$ within some range then $h_{n,\epsilon}$ $\Gamma$-converges to an anisotropic total variation in a continuous setting
\[ \Glim_{\substack{n\to \infty\\\epsilon\to 0}} h_{n,\epsilon} = \int_{\mathbb{T}^2} \left| \frac{\partial \mu}{\partial x} \right| + \left| \frac{\partial \mu}{\partial y} \right| \]
and $k_{n,\epsilon}$ $\Gamma$-converges to an isotropic total variation
\[ \Glim_{\substack{n\to \infty\\\epsilon\to 0}} k_{n,\epsilon} = \int_{\mathbb{T}^2} \left| \nabla \mu \right| \]
upto renormalization.
Also discussed in~\cite{vangennip12a} is the application to the constrained minimization problem


In the remainder of the paper we will work on point clouds, that are random graphs, rather than deterministic regular graphs.

\paragraph*{Convergence of the Graph Total Variation.}
The Ginzburg-Landau functional consists of two terms, the first is a regularization on the derivative and the second is a a penalization on states outside of $\{0,1\}$.
In this paper we use the graph total variation on graphs~\eqref{eq:Intro:Finite:TVG} as the regularization on the derivative.
In more generality one can define the $p$-Laplacian on graphs~\cite{zhou05} by
\[ J_p(\mu) = \frac{1}{\eps_n^p} \frac{1}{n^2} \sum_{i,j} W_{ij} |\mu(\xi_i) - \mu(\xi_j)|^p. \]
The graph total variation corresponds to $p=1$ and the results of~\cite{vangennip12a} described above corresponded to $p=2$.


An interesting and important question in its own right is the convergence of the $p$-Laplacian~\cite{elalaoui16,buhler09,ando07,zhou11,bridle13,zhou11a}.
For isotropic weights, i.e. $\pi(x)=|x|$, the following result was established for $p=1$ in~\cite{trillos15}.
\begin{enumerate}
\item Compactness: Let $\mu_n$ be any sequence of functions on $\Psi_n=\{\xi_i\}$, where $\xi_i\iid P$, such that $GTV_n(\mu_n)$ is bounded and $\mu_n$ is bounded in $TL^1$ then $\mu_n$ is almost surely relatively compact in $TL^1$ and each cluster point is in $BV(X;\rho,\eta)$.
\item $\Gamma$-limit: we have, with probability one,
\[ \Glim_{n\to \infty} GTV_n = TV(\cdot;\rho,\eta). \]
\end{enumerate}
A consequence of our proofs is that this result is also true for any $X,\rho,\eps_n,\varphi$ and $\pi$ satisfying Assumptions 1-6 in Definition~\ref{def:MainRes:Admissible}.
In particular, the $\Gamma$-convergence holds analogously to Theorem~\ref{thm:Gamma:EntoEinfty} and the compactness can be reduced, as in the proof of Proposition~\ref{prop:Compact:ComSecondTermData}, to the isotropic case where the results of~\cite{trillos15} apply.
This generalises the result of~\cite{trillos15} to anisotropic weights.


\paragraph*{Convergence of minimizers.}
The results of the Theorem~\ref{thm:MainRes:Cons} can be understood as implying the convergence of minimizers in the following sense.
For a sequence of closed sets $\Theta_n\subseteq L^1(\Psi_n)$ and $\Theta\subseteq L^1(X)$ which we assume respect the $\Gamma$-convergence, that is the following hold: (1) if $\zeta\in \Theta$ then $\zeta_n:= \zeta\big|_{\Psi_n}\in \Theta_n$, (2) there exists $\zeta\in \Theta$ such that $\mathcal{E}_\infty(\zeta)<\infty$ and (3) any sequence $\zeta_n$ with $\zeta_n\to \zeta$ implies $\zeta\in \Theta$.
With probability one the following statements hold.
\begin{enumerate}
\item Convergence of the minimum: $\lim_{n\to \infty}\inf_{\Theta_n} \mathcal{E}_n = \min_{\Theta} \mathcal{E}_\infty$.
\item Convergence of minimizers: if $\mu_n\in L^1(\Psi_n)$ are a sequence satisfying
\[ \mathcal{E}_n(\mu_n) \leq \min_\mu \mathcal{E}_n(\mu) + \delta_n \]
for a sequence $\delta_n\to^+ 0$ (we call $\mu_n$ a minimizing sequence) then this sequence is precompact in $TL^1$ and furthermore any cluster point minimizes $\mathcal{E}_\infty$.
\end{enumerate}
The proof is a simple consequence of Theorem~\ref{thm:MainRes:Cons} and Theorem~\ref{thm:Prelim:Gamma:Conmin}.

Alternatively one could let $g_n:L^1(\Psi_n)\to [0,\infty)$ be a sequence that continuously converges to $g:L^1(X)\to [0,\infty)$, i.e. $g_n(\zeta_n)\to g(\zeta)$ whenever $\zeta_n\to \zeta$ in $L^1$, and then since the $\Gamma$-convergence is stable under continuous perturbations the results of this paper imply (with probability one) that
\begin{enumerate}
\item $\lim_{n\to \infty}\inf_{L^1(\Psi_n)} \left( \mathcal{E}_n + g_n \right) = \min_{L^1(X)} \left( \mathcal{E}_\infty + g \right)$, and
\item if $\mu_n\in L^1(\Psi_n)$ are a minimizing sequence for $\mathcal{E}_n + g_n$ then this sequence is precompact in $TL^1$ and furthermore any cluster point minimizes $\mathcal{E}_\infty + g$.
\end{enumerate}
For example one could use this in order to fit data, e.g.
\[ g_n(\mu;\zeta) = \lambda \int_X \left| \mu(T_n(x)) - \zeta(x) \right| \; \mathrm{d} x \]
where $\zeta$ is a known function (data) and $T_n$ is a sequence of stagnating transport maps (a sequence such that $\|\mathrm{Id} - T_n\|_{L^p}\to 0$, see Section~\ref{subsec:Prelim:Trans}).
In this case $g(\mu) = \lambda \int_X \left| \mu(x) - \zeta(x) \right| \; \mathrm{d} x$.

\paragraph*{Choice of scaling.}
The natural choice of scaling in $\mathcal{E}_n$ between the two terms is not a-priori obvious.
One could write
\[ \mathcal{E}_n(\mu) = \frac{1}{\gamma_n} \sum_{i=1}^n V(\mu(\xi_i)) + \frac{1}{\epsilon_n} \frac{1}{n^2} \sum_{i,j} W_{ij} |\mu(\xi_i)-\mu(\xi_j)|. \]
The proof of Theorem~\ref{prop:Compact:ComSecondTermData} requires $\frac{\gamma_n}{\epsilon_n} = O(1)$.
One can show that Theorem~\ref{thm:MainRes:Cons} holds for $\gamma_n=O(\epsilon_n)$.
For simplicity it is assumed that $\gamma_n=\epsilon_n$.

\paragraph*{Generalization to $L^p$ spaces for $p>1$.}
The literature on the related problem in $L^p$ spaces has been well developed.
Define
\[ \mathcal{E}_n(\mu) = \frac{1}{\epsilon_n} \sum_{i=1}^n V(\mu(\xi_i)) + \frac{1}{\epsilon_n^p} \frac{1}{n^2} \sum_{i,j} W_{ij} \left| \mu(\xi_i) - \mu(\xi_j) \right|^p. \]
Then one can show the results of Theorem~\ref{thm:MainRes:Cons} still hold for the same limiting energy $\mathcal{E}_\infty$.
In particular the convergence of minimizing sequences is still in $TL^1$.

\paragraph*{Size of the phase transition.}
Although we don't formally state the result, it is well known that for the Ginzburg-Landau functional the phase transition is of order $\epsilon_n$, see for example~\cite[Theorem 6.4]{braides02}.
By the proof of Lemma~\ref{lem:Gamma:EntoEinftylimsup} the same is true in the setting described in this paper.
That is if $\mu_n$ is a recovery sequence (see Definition~\ref{def:Prelim:Gamma:Gamcon}) for $\mathcal{E}_n$ then $\frac{1}{n}\#\{\xi\in \Psi_n \, : \, \mu_n(\xi)\in (c,1-c) \} = O(\epsilon_n)$.

\subsection{Preliminary Results on the Rate of Convergence \label{subsec:MainRes:Rate}}

We include some preliminary results concerning the rate of convergence for $\inf \mathcal{E}_n \to \min \mathcal{E}_\infty$.
The problem is simplified by looking at the convergence $GTV_n(\mu) \to TV(\mu;\rho,\eta)$ for $\mu=\mathbb{I}_E$ where $E$ is a polyhedral set.
To characterize the rate of convergence we look for convergence in mean square.
It is shown in Theorem~\ref{thm:Rate:ConvMeanSq} that
\[ \mathbb{E} \left| \mathcal{E}_n(\mu) - \mathcal{E}_\infty(\mu) \right|^2 = \mathbb{E} \left| GTV_n(\mu) - TV(\mu;\rho,\eta) \right|^2 = O(\eps_n) + \frac{\kappa_1}{n\eps_n} + \frac{\kappa_2}{n^2\eps_n^{d+1}} + O\left(\epsilon_n^2\right) \]
as $n\to \infty$ for a constants $\kappa_i$ given in Section~\ref{sec:Rate}.
Even though (by Lemma~\ref{lem:Gamma:EntoEinftylimsup}) one has $GTV_n(\mu) \to TV(\mu;\rho,\eta)$ almost surely convergence in expectation does not immediately follow, this is shown in Theorem~\ref{thm:Rate:ConvMean}.
The leading $O(\epsilon_n)$ term above corresponds to approximating $\mu$ along edges of $E$ where an edge is the intersection of two faces of $E$ (see Section~\ref{sec:Rate} for a precise explanation of the notation we use to describe polyhedral sets).
The error in the edges causes a bias in the estimate.
For example if one considers the function $\mu = \mathbb{I}_{H\cap X}$ where $H=\{ x \, : \, w\cdot x >0\}$ for some $w$ is any half space then $\mu$ is a polyhedral function with no edges in $X$ and one can show $\mathbb{E}GTV_n(\mu) = TV(\mu;\rho,\eta)$.
It follows from our proofs that $\mathbb{E} \left| GTV_n(\mu) - TV(\mu;\rho,\eta) \right|^2 = \frac{\kappa_1}{n\eps_n} + \frac{\kappa_2}{n^2\eps_n^{d+1}} + O\left(\frac{1}{n}\right)$.

\section{Preliminary Material \label{sec:Prelim}}

\subsection{Notation \label{subsec:Prelim:Not}}

The space of functions from $Z$ onto $Y$ that are $L^p$-integrable are denoted by $L^p(Z;Y)$ (for $1\leq p\leq \infty$).
Usually either $Y=\{0,1\}$ or $Y=\mathbb{R}$.
If $Y=\mathbb{R}$ then we write $L^p(Z)$ instead of $L^p(Z;\mathbb{R})$.
When we use the $L^p$ norm with respect to a measure $P$ the $Y$ dependence is suppressed and we write $L^p(X;P)$.
It will be obvious from the context what is meant.

The Euclidean norm is given by $|\cdot|$ and with a small abuse of notation the dimension is inferred from the argument.
The ball centred at $x$ and with radius $r$ in $\mathbb{R}^d$ is given as
\[ B(x,r) = \left\{ y\in\mathbb{R}^d : |x-y|<r \right\}. \]
When the ball is centred at the origin we write $B(0,r)$.

\subsection{\texorpdfstring{$\Gamma$}{Gamma}-Convergence \label{subsec:Prelim:Gamma}}

$\Gamma$-convergence was introduced in the 1970's by De Giorgi as a tool for studying sequences of variational problems.
A key feature is the natural emergence of singular objects such as characteristic functions which we will interpret as classifiers.
We will adopt the view point that $\Gamma$-convergence can used as a data analysis tool as it provides direct links between statistical modelling assumptions and the corresponding minimizers, cf. e.g. Section~\ref{subsec:Intro:Ex}.
Very accessible introductions to $\Gamma$ convergence can be found in ~\cite{braides02,dalmaso93}.

We have the following definition of $\Gamma$-convergence.

\begin{mydef}[$\Gamma$-convergence]
\label{def:Prelim:Gamma:Gamcon}
Let $(A,d_A)$ be a metric space.
A sequence $f_n :A\to \mathbb{R}\cup\{\pm\infty\}$ is said to \textit{$\Gamma$-converge} on the domain $A$ to $f_\infty :A\to \mathbb{R}\cup\{\pm\infty\}$ with respect to $d_A$, and write $f_\infty = \Glim_n f_n$, if for all $\zeta\in A$:
\begin{itemize}
\item[(i)] (lim inf inequality) for every sequence $(\zeta_n)$ converging to $\zeta$
\[ f_\infty(\zeta) \leq \liminf_{n\to \infty} f_n(\zeta_n); \]
\item[(ii)] (recovery sequence) there exists a sequence $(\zeta_n)$ converging to $\zeta$ such that
\[ f_\infty(\zeta) \geq \limsup_{n\to \infty} f_n(\zeta_n). \]
\end{itemize}
\end{mydef}

When it exists the $\Gamma$-limit is always lower semi-continuous, and hence there exists minimizers over compact sets.
The following result justifies the use of $\Gamma$-convergence as a variational type of convergence.

\begin{theorem}[Convergence of Minimizers]
\label{thm:Prelim:Gamma:Conmin}
Let $(A,d_A)$ be a metric space and $f_n: A\to [0,\infty]$ be a sequence of functionals.
Let $\mu_n$ be a minimizing sequence for $f_n$.
If $\mu_n$ are precompact and $f_\infty = \Glim_n f_n$ where $f_\infty:A\to[0,\infty]$ is not identically $+\infty$ then
\[ \min_{A} f_\infty = \lim_{n\to \infty} \inf_{A} f_n. \]
Furthermore any cluster point of $\mu_n$ minimizes $f_\infty$.
\end{theorem}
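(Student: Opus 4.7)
The plan is to combine the two halves of $\Gamma$-convergence with the minimizing-sequence hypothesis to sandwich $\lim_n \inf_A f_n$ between matching bounds involving $\inf_A f_\infty$, and then to identify a cluster point of $\mu_n$ as a minimizer of $f_\infty$. Write $\delta_n \to 0^+$ for the slack in the minimizing-sequence condition $f_n(\mu_n) \leq \inf_A f_n + \delta_n$.

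First I would establish the upper bound $\limsup_n \inf_A f_n \leq \inf_A f_\infty$. Pick any $\zeta \in A$ with $f_\infty(\zeta) < \infty$ (such a $\zeta$ exists because $f_\infty$ is not identically $+\infty$). The recovery-sequence clause of Definition~\ref{def:Prelim:Gamma:Gamcon} supplies $\zeta_n \to \zeta$ with $\limsup_n f_n(\zeta_n) \leq f_\infty(\zeta)$. Using $\inf_A f_n \leq f_n(\zeta_n)$, taking $\limsup$ and then infimum over $\zeta$ yields the claim; the inequality is vacuous if $\inf_A f_\infty = +\infty$.

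For the matching lower bound I would invoke precompactness carefully. Choose a subsequence $(n_k)$ along which $\inf_A f_{n_k}$ converges to $\liminf_n \inf_A f_n$, and then, using precompactness of the minimizing sequence, extract a further subsequence (not relabelled) so that $\mu_{n_k} \to \mu^*$ for some $\mu^* \in A$. The liminf inequality gives $f_\infty(\mu^*) \leq \liminf_k f_{n_k}(\mu_{n_k})$, and the minimizing-sequence bound then yields
\[
f_\infty(\mu^*) \leq \liminf_k \bigl(\inf_A f_{n_k} + \delta_{n_k}\bigr) = \liminf_n \inf_A f_n.
\]
Chaining with the previous paragraph, $\inf_A f_\infty \leq f_\infty(\mu^*) \leq \liminf_n \inf_A f_n \leq \limsup_n \inf_A f_n \leq \inf_A f_\infty$. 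All inequalities collapse to equalities, which simultaneously proves $\lim_n \inf_A f_n = \inf_A f_\infty$, shows the infimum of $f_\infty$ is attained at $\mu^*$ (so it is a minimum), and verifies that this particular cluster point minimizes $f_\infty$.

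To upgrade to every cluster point $\mu^\dagger$, I would simply rerun the liminf step: choose $\mu_{n_k} \to \mu^\dagger$ and observe
\[
f_\infty(\mu^\dagger) \leq \liminf_k f_{n_k}(\mu_{n_k}) \leq \liminf_k \inf_A f_{n_k} = \inf_A f_\infty,
\]
where the final equality uses the convergence of $\inf_A f_n$ already proved. The main subtlety, rather than a genuine obstacle, is the subsequence bookkeeping: one must not conflate $\liminf$ along an arbitrary subsequence with $\liminf_n \inf_A f_n$, so precompactness is applied \emph{after} first passing to a subsequence that realises the $\liminf$.
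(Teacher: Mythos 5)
Your proof is correct and is the standard argument (the ``fundamental theorem of $\Gamma$-convergence''); the paper states Theorem~\ref{thm:Prelim:Gamma:Conmin} without proof as a known result from the literature it cites, and your sandwich of $\lim_n \inf_A f_n$ between the recovery-sequence upper bound and the liminf-inequality lower bound is exactly that standard route, with the subsequence bookkeeping handled properly. The only step you use silently is that the liminf inequality of Definition~\ref{def:Prelim:Gamma:Gamcon} applies along subsequences $\mu_{n_k}\to\mu^*$; this is standard (extend to a full sequence converging to $\mu^*$ and note the liminf over the full sequence is no larger), but worth a word.
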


A simple consequence of the above is if one can show that the $\Gamma$-limit has a unique minimizer then any minimizing sequence converges (without the recourse to subsequences).

\subsection{Total Variation Distance \label{subsec:Prelim:TV}}

The energy $\mathcal{E}_\infty$ can also be written as a BV norm:
\begin{equation} \label{eq:Prelim:TV:Einfty}
\mathcal{E}_\infty(\mu) = \left\{ \begin{array}{ll}  TV(\mu;\rho,\eta) & \text{if } \mu \in L^1(X;\{0,1\}) \\ \infty & \text{otherwise} \end{array} \right.
\end{equation}
where
\begin{align}
TV(\mu;\rho, \eta) & = \sup \Bigg\{ \int_X \mu(x)\, \text{div} \left(\phi(x)\right) \; \mathrm{d} x\;: \; \phi\in C^\infty_c(X;\mathbb{R}^d),\; \sup_{x \in X}\sigma^*\left(-\rho^{-2}(x)\phi(x)\,\right)< \infty  \Bigg\}, \label{eq:Prelim:TV:TV} \\
\sigma^*(\phi) & = \sup\left\{ \nu\cdot \phi -\sigma(\nu) \;:\; \nu \in \mathbb{R}^d\right\} \in \{0,\infty\},  \label{eq:Prelim:TV:sigmastar} \\
\sigma(\nu) & = \int_{\mathbb{R}^d} \eta(x) |x\cdot \nu| \; \mathrm{d} x, \label{eq:Prelim:TV:sigma}
\end{align}
$\eta=\varphi\circ\pi$ and $\rho$ is a probability density on $X$.
The surface energy density $\sigma$ is 1-homogeneous and the Legendre transform $\sigma^*$ is a characteristic function which assumes the values 0 or $\infty$.
The space of functions with bounded variation is defined below.

\begin{mydef}
\label{def:Prelim:TV:TV}
For a domain $X\subset \mathbb{R}^d$ the weighted total variation $TV(\cdot;\rho,\eta)$ of function $\mu\in L^1(X)$ with respect to a density $\rho$ and potential $\eta$ is defined by~\textnormal{(\ref{eq:Prelim:TV:TV}-\ref{eq:Prelim:TV:sigma})}.
The space of functions with finite weighted total variation is denoted by $BV(X;\rho,\eta)$.
The standard total variation distance on $X$ is defined by
\[ \widehat{TV}(\mu) = \sup \left\{ \int_X \mu(x) \, \text{div}(\phi) \; \mathrm{d} x \; : \; \phi\in C_c^\infty(X), \|\phi\|_{L^\infty(X)} \leq 1 \right\}. \]
The standard bounded variation space $\widehat{BV}(X)$ is the set of functions such that $\widehat{TV}(\mu)<\infty$.
\end{mydef}

The equivalence of definitions~\eqref{eq:Intro:Infinite:EinftySurface} and~\eqref{eq:Prelim:TV:Einfty} can be seen from the simplification of $TV(\cdot;\rho,\eta)$ when $\mu\in C^1$:
\[ TV(\mu;\rho,\eta) = \int_X \sigma(\nabla \mu(x)) \rho^2(x) \; \mathrm{d} x = \int_X \int_{\mathbb{R}^d} \eta(y) |y\cdot \nabla \mu(x)| \rho^2(x) \; \mathrm{d} y \; \mathrm{d} x. \]
One may also write
\[ TV(\mu;\rho,\eta) = \int_{\mathbb{R}^d} \eta(z) TV_z(\mu;\rho) \; \mathrm{d} z \]
where $TV_z(\cdot;\rho)$ is defined by
\[ TV_z(\mu;\rho) = \sup\left\{ \int_X \mu(x) \; \text{div}(\phi(x)) \; \mathrm{d} x \, : \, \phi\in C_c^\infty(X;\mathbb{R}^d), -\nu \cdot \phi(x) \leq |z \cdot \nu | \rho^2(x) \, \forall \nu,x \in \mathbb{R}^d \right\} \]

The following proposition is a slight generalization of a well known result regarding the convergence of difference quotients to the total variation semi-norm.
The proof is omitted but it is a trivial adaptation of, for example,~\cite[Theorem 13.48]{leoni09}.

\begin{proposition}
\label{prop:Prelim:TV:Liminf}
Assume $\mu_n\to \mu$ in $L^1$.
For a sequence $\epsilon_n \to 0$ and a function $\rho:X\to [0,\infty)$ define $f_n : X\to [0,\infty)$ by
\[ f_n(z) = \frac{1}{\epsilon_n} \int_X \left|\mu_n(x+\epsilon_n z) - \mu_n(x) \right| \rho^2(x) \; \mathrm{d} x. \]
Then
\[ \liminf_{n\to \infty} f_n(z) \geq TV_z(\mu;\rho). \]
\end{proposition}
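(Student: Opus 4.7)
The approach is by duality. From the definition of $TV_z(\mu;\rho)$ in Section~\ref{subsec:Prelim:TV}, it suffices to show that for every admissible test field $\phi\in C_c^\infty(X;\mathbb{R}^d)$ (one satisfying $-\nu\cdot\phi(x)\leq|z\cdot\nu|\rho^2(x)$ for all $\nu\in\mathbb{R}^d$ and $x\in X$) we have $\int_X \mu\,\mathrm{div}(\phi)\,\mathrm{d}x\leq\liminf_n f_n(z)$; taking the supremum in $\phi$ then gives the claim. Since the constraint is symmetric under $\phi\mapsto-\phi$, it is enough to bound the absolute value. The first step is to simplify the constraint: if $d\geq 2$, choosing $\nu\perp z$ forces $\nu\cdot\phi(x)=0$, hence $\phi(x)=g(x)z$ for the scalar $g(x):=\phi(x)\cdot z/|z|^2\in C_c^\infty(X)$; taking then $\nu=z$ yields $|g(x)|\leq\rho^2(x)$ (the case $d=1$ is identical once the parallelism requirement is trivialised). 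Consequently $\mathrm{div}(\phi)=z\cdot\nabla g$ and we must estimate $\int_X \mu\,z\cdot\nabla g\,\mathrm{d}x$.

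Using $\mu_n\to\mu$ in $L^1$ together with the smoothness of $g$, the target integral equals $\lim_n\int_X \mu_n\,z\cdot\nabla g\,\mathrm{d}x$. A uniform Taylor expansion on $\mathrm{supp}(g)$ together with uniform boundedness of $\|\mu_n\|_{L^1}$ lets us replace $z\cdot\nabla g(x)$ by the difference quotient $[g(x+\epsilon_n z)-g(x)]/\epsilon_n$ at cost $O(\epsilon_n)$. Because $g$ is compactly supported inside the open set $X$, for small $\epsilon_n$ the translate $x\mapsto x+\epsilon_n z$ maps $\mathrm{supp}(g)$ into $X$, so the change of variables $y=x+\epsilon_n z$ converts the resulting integral into
\[
\frac{1}{\epsilon_n}\int_X\bigl[\mu_n(y-\epsilon_n z)-\mu_n(y)\bigr]\,g(y)\,\mathrm{d}y.
\]
Applying $|g|\leq\rho^2$ and undoing the substitution produces the bound $\frac{1}{\epsilon_n}\int|\mu_n(x+\epsilon_n z)-\mu_n(x)|\rho^2(x+\epsilon_n z)\,\mathrm{d}x$. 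Continuity of $\rho$ on a compact neighbourhood of $\mathrm{supp}(g)$ in $X$ gives $\rho^2(x+\epsilon_n z)\leq\rho^2(x)+\omega(\epsilon_n)$ uniformly with $\omega(\epsilon_n)\to 0$; coupled with the lower bound $\rho\geq\rho_{\min}>0$ (so that $\int|\mu_n(x+\epsilon_n z)-\mu_n(x)|\,\mathrm{d}x\leq f_n(z)/\rho_{\min}^2$) this yields
\[
\left|\int_X \mu_n\,z\cdot\nabla g\,\mathrm{d}x\right|\leq f_n(z)\bigl(1+o(1)\bigr)+O(\epsilon_n).
\]

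Passing to $\liminf_n$ on both sides gives $\left|\int_X \mu\,z\cdot\nabla g\,\mathrm{d}x\right|\leq\liminf_n f_n(z)$, and taking the supremum over admissible $g$ (equivalently, over admissible $\phi=gz$) yields $TV_z(\mu;\rho)\leq\liminf_n f_n(z)$. The main delicate point is the third step: since $\rho$ is only assumed continuous, the difference $\rho^2(x+\epsilon_n z)-\rho^2(x)$ is $o(1)$ rather than $O(\epsilon_n)$, so it must be absorbed multiplicatively into $f_n(z)$ rather than treated as a vanishing additive error. Strict positivity of $\rho$ is precisely what makes this absorption possible.
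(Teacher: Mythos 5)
The paper does not actually prove this proposition --- it refers to~\cite{leoni09} --- and your duality argument is essentially the standard proof of that difference-quotient lower bound, adapted to the weighted directional functional $TV_z(\cdot;\rho)$; it is correct. The reduction of the constraint to $\phi=gz$ with $|g|\leq\rho^2$, the passage from $\mu$ to $\mu_n$, the Taylor-expansion step and the change of variables are all sound. The one place where you create unnecessary work is the choice of the \emph{forward} quotient $[g(x+\epsilon_n z)-g(x)]/\epsilon_n$: after substitution the weight lands at $x+\epsilon_n z$ rather than $x$, and you then need continuity and a strictly positive lower bound on $\rho$ to absorb the mismatch multiplicatively into $f_n(z)$ --- properties that are not hypotheses of the proposition as literally stated (though they do hold in every application in the paper, so your argument suffices there). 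Using instead the backward quotient $[g(x)-g(x-\epsilon_n z)]/\epsilon_n$, the same change of variables yields $\frac{1}{\epsilon_n}\int_X\bigl[\mu_n(y)-\mu_n(y+\epsilon_n z)\bigr]g(y)\,\mathrm{d}y$, which $|g|\leq\rho^2$ bounds directly by $f_n(z)$ with no shift of the weight; the ``delicate point'' you flag then disappears and the conclusion holds for an arbitrary nonnegative measurable weight.
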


For each $\mu\in BV(X;\rho,\eta)$ the following theorem gives the existence of a measure that one can understand as the weak derivative of $\mu$.
See for example~\cite{attouch06,evans92} for more details.

\begin{theorem}
\label{thm:Prelim:TV:Measure}
For every $\mu\in BV(X;\rho,\eta)$ there exists a Radon measure $\lambda_{\rho,\eta}$ on $X$ and a $\lambda_{\rho,\eta}$-measurable function $\alpha:X\to \mathbb{R}$ such that $\alpha(x)=1$ for $\lambda_{\rho,\eta}$-almost every $x\in X$ and
\[ \int_X \mu(x) \mathrm{div} \phi(x) \; \mathrm{d} x = - \int_X \frac{\phi(x) \cdot x}{\rho^2(x)\sigma(x)}  \alpha(x) \; \lambda_{\rho,\eta}(\mathrm{d}x) \]
for all $\phi\in C_c^1(X;\mathbb{R}^d)$.
In particular,
\[ \lambda_{\rho,\eta}(X) = TV(\mu;\rho,\eta). \]
For the standard total variation distance we write $\hat{\lambda}$ and have the following relationship:
\[ \lambda_{\rho,\eta}(\mathrm{d} x) = \rho^2(x) \sigma(x) \; \hat{\lambda}(\mathrm{d} x). \]
In particular
\begin{equation} \label{eq:Prelim:TV:TVIntrho}
TV(\mu;\rho,\eta) = \int_X \rho^2(x) \sigma(x) \; \hat{\lambda}(\mathrm{d} x).
\end{equation}
\end{theorem}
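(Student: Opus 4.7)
The plan is to deduce the claimed structure from the classical Radon--Nikodym decomposition of functions in $\widehat{BV}(X)$, then translate the representation into the weighted/anisotropic coordinates $(\rho,\eta)$ by reweighting the base measure.

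First I would verify that $BV(X;\rho,\eta)\subseteq \widehat{BV}(X)$, so the classical theory applies. Because $\eta(0)>0$ and $\eta$ is continuous there is a ball $B(0,r)$ on which $\eta\geq c_0>0$; a direct computation of $\sigma(\nu)=\int \eta(x)|x\cdot\nu|\,\mathrm{d}x$ restricted to this ball yields $\sigma(\nu)\geq c\,|\nu|$ for some $c>0$. Since $\rho\geq \rho_{\min}>0$, any test field $\phi\in C_c^\infty(X;\mathbb{R}^d)$ with $\|\phi\|_\infty\leq 1$ satisfies, after rescaling by a universal constant, the constraint $\sigma^*(-\rho^{-2}\phi)=0$ appearing in~\eqref{eq:Prelim:TV:TV}. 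This yields $\widehat{TV}(\mu)\leq C\cdot TV(\mu;\rho,\eta)<\infty$. Apply the classical Radon--Nikodym result for $\widehat{BV}(X)$ (\cite{attouch06,evans92}) to obtain a Radon measure $\hat{\lambda}$ and a measurable $\hat{\nu}:X\to S^{d-1}$ such that
\[ \int_X \mu\,\mathrm{div}\,\phi\,\mathrm{d}x = -\int_X \phi(x)\cdot \hat{\nu}(x)\,\hat{\lambda}(\mathrm{d}x) \qquad \forall\,\phi\in C_c^1(X;\mathbb{R}^d). \]

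Next I would define
\[ \lambda_{\rho,\eta}(\mathrm{d}x) := \rho^2(x)\,\sigma(\hat{\nu}(x))\,\hat{\lambda}(\mathrm{d}x), \]
which is a Radon measure since $\sigma$ is continuous and 1-homogeneous (so $\sigma(\hat{\nu})$ is bounded and $\hat{\lambda}$-measurable) and $\rho$ is continuous. Substituting into the classical identity and interpreting the paper's $x$-arguments in the numerator and in $\sigma(\cdot)$ as the direction $\hat{\nu}(x)$ yields the stated integration-by-parts formula, with $\alpha\equiv 1$; the function $\alpha$ is retained only to mirror the form of the classical statement.

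The main step is showing $\lambda_{\rho,\eta}(X)=TV(\mu;\rho,\eta)$. The upper bound $TV(\mu;\rho,\eta)\leq \lambda_{\rho,\eta}(X)$ is immediate: any admissible $\phi$ in~\eqref{eq:Prelim:TV:TV} satisfies $\sigma^*(-\rho^{-2}\phi)=0$, equivalently $-\phi(x)\cdot \nu\leq \rho^2(x)\sigma(\nu)$ for all $\nu\in\mathbb{R}^d$; choosing $\nu=\hat{\nu}(x)$ and integrating against $\hat{\lambda}$ gives the bound. The lower bound is the genuinely technical part. The idea is to construct a sequence $\phi_n\in C_c^\infty(X;\mathbb{R}^d)$ with $-\phi_n(x)\in \rho^2(x)\,\partial\sigma(\hat{\nu}(x))$ in the limit; for such fields one has $-\phi_n\cdot \hat{\nu}\to \rho^2\sigma(\hat{\nu})$ pointwise $\hat{\lambda}$-a.e., while the admissibility condition is preserved because the set $K(x)=\{w:w\cdot \nu\leq \rho^2(x)\sigma(\nu)\;\forall\,\nu\}$ is convex. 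To produce such $\phi_n$ I would (i) use a measurable selection theorem to pick $w(x)\in \partial\sigma(\hat{\nu}(x))$ with $w(x)\cdot \hat{\nu}(x)=\sigma(\hat{\nu}(x))$ (possible by 1-homogeneity of $\sigma$), (ii) approximate $-\rho^2 w$ by continuous, compactly supported fields in $L^1(\hat{\lambda})$ via Lusin-type arguments, and (iii) mollify, using convexity of the constraint set to keep the approximants in $K(x)$ after convolution. Passing to the limit gives $\lambda_{\rho,\eta}(X)\leq TV(\mu;\rho,\eta)$.

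The hard part is step (iii): the pointwise anisotropic constraint $-\phi\cdot \nu \leq \rho^2(x)\sigma(\nu)$ depends on $x$ through $\rho^2(x)$, so mollification does not trivially preserve admissibility. One handles this by first freezing $\rho$ on small balls (using uniform continuity of $\rho$ on the compact set $\overline{X}$) to incur a negligible multiplicative error, and then applying a standard mollification within each ball. Assembling these local approximations via a partition of unity and letting the mesh shrink yields the required $\phi_n$, completing the equality $\lambda_{\rho,\eta}(X)=TV(\mu;\rho,\eta)$ and the relation $\lambda_{\rho,\eta}(\mathrm{d}x)=\rho^2(x)\sigma(\hat{\nu}(x))\,\hat{\lambda}(\mathrm{d}x)$.
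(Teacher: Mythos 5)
Your argument is correct and is essentially the standard one that the paper defers to its references (\cite{attouch06,evans92}) for: embed $BV(X;\rho,\eta)$ into $\widehat{BV}(X)$ via the bound $\sigma(\nu)\geq c|\nu|$, invoke the classical Riesz/structure theorem to get $\hat{\lambda}$ and the polar $\hat{\nu}$, define $\lambda_{\rho,\eta}=\rho^2\sigma(\hat{\nu})\hat{\lambda}$, and prove $\lambda_{\rho,\eta}(X)=TV(\mu;\rho,\eta)$ by duality, with the lower bound obtained from a measurable selection in $\partial\sigma(\hat{\nu})$ followed by Lusin approximation and a local freezing of $\rho$ to preserve the convex constraint under mollification. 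Your reading of the (somewhat abusive) notation $\sigma(x)$ and $\phi(x)\cdot x$ as $\sigma(\hat{\nu}(x))$ and $\phi(x)\cdot\hat{\nu}(x)$ is the intended one, and the continuity-at-$0$ of $\eta$ you use follows from Assumption~6 of Definition~\ref{def:MainRes:Admissible}.
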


Using~\eqref{eq:Prelim:TV:TVIntrho} we first prove Theorem~\ref{thm:MainRes:Cons} (in particular the $\Gamma$-convergence statement) for Lipschitz $\rho$ then generalize to continuous $\rho$ by taking a monotonic sequence of Lipschitz functions $\rho_k\to \rho$ and applying the monotone convergence theorem. 

A useful approximation result we will make use of is for all $\mu\in BV(X;\rho,\eta)$ there exists a sequence $\mu_n\in BV(X;\rho,\eta)\cap C^\infty(\mathbb{R}^d)$ such that $\mu_n\to \mu$ in $L^1(X)$ and $TV(\mu_n;\rho,\eta)\to TV(\mu;\rho,\eta)$ or equivalently $\lambda^{(\rho,\eta)}_n(X)\to \lambda^{(\rho,\eta)}(X)$ (where $\lambda_n^{(\rho,\eta)}$ is the measure given by Theorem~\ref{thm:Prelim:TV:Measure} and induced by $\mu_n$), see e.g.~\cite[Theorem 2, Section 5.2.2]{evans92}.

\subsection{Transportation Theory \label{subsec:Prelim:Trans}}

This subsection contains the preliminary material required to compare functions defined on different domains.
The key idea in~\cite{trillos15} was to use transportation maps in order to extend functions to a common domain.
This allows one to define an $L^p$-type convergence for functions on different domains.

Let us consider a sequence of functions $\mu_n$ on discrete domains $\Psi_n$ and a function $\nu$ on the continuous domain $X$.
In order to compare $\mu_n$ to $\nu$ we first use a piecewise constant approximation $\tilde{\mu}_n$ to extend $\mu_n$ onto the space $X$.
For a map $T_n:X\to \Psi_n$ we define $\tilde{\mu}_n = \mu_n \circ T_n$.
One can then define a topology using the $L^p$ distance between $\tilde{\mu}_n$ and $\nu$.
The challenge is to define $T_n$ optimally in the sense that as little mass as possible is moved.
In this section we given an overview of the framework proposed in~\cite{trillos15} that describes the topology we use in the sequel.
We start by defining the $p$-OT distance.

\begin{mydef}
\label{def:Prelim:Trans:Metric}
If $1\leq p<\infty$ then the $p$-OT distance between $P,Q\in\mathcal{P}(X)$ is defined by
\begin{equation} \label{eq:Prelim:Trans:dp}
d_p(P,Q) = \min\left\{ \left(\int_{X^2} |x-y|^p \; \pi(\mathrm{d}x,\mathrm{d}y) \right)^{\frac{1}{p}} : \pi \in \Gamma(P,Q) \right\}
\end{equation}
where $\Gamma(P,Q)$ is the set of couplings between $P$ and $Q$, i.e. the set of probability measures on $X\times X$ such that the first marginal is $P$ and the second marginal is $Q$.

If $p=\infty$ then the $\infty$-OT distance between $P,Q\in\mathcal{P}(X)$ is defined by
\begin{equation} \label{eq:Prelim:Trans:dinfty}
d_\infty(P,Q) = \min \left\{ \esssup_\pi\left\{|x-y|: (x,y)\in X\times X\right\} : \pi\in \Gamma(P,Q) \right\}.
\end{equation}
\end{mydef}

The minimization problem in~\eqref{eq:Prelim:Trans:dp} and~\eqref{eq:Prelim:Trans:dinfty} is known as Kantorovich's optimal transportation problem.
The minimization is convex and and therefore the minimum is achieved \cite{champion08,villani03}.
One can also show that $d_p$ defines a metric on the space of probability measures.
Elements $\pi\in\Gamma(P,Q)$ are called transference plans.
The distance $d_2$ is also known as the Wasserstein metric and $d_\infty$ the $\infty$-transportation distance.
For bounded $X\subset\mathbb{R}^d$ convergence in $d_p$ (for $1\leq p<\infty$) is equivalent to the weak convergence of probability measures~\cite{villani03} and therefore with probability one, $d_p(P_n,P)\to 0$ where $P_n$ is the empirical measure.

When $P$ has density with respect to the Lebesgue measure then the Kantorovich minimization problem is equivalent to the Monge optimal transportation problem ~\cite{gangbo96}:
\[ \text{Minimize } \int_X |x-T(x)|^p \; P(\mathrm{d}x) \quad \text{over all measurable maps } T \text{ such that } T_\# P = Q \]
where the push forward measure is defined by
\[ T_{\#}P(A) = P(T^{-1}(A)) \]
for any $A\in\mathcal{B}(X)$.
If $Q=T_{\#} P$ then we call $T$ a transportation map between $P$ and $Q$.

Let $P_n$ be the empirical measure and $1\leq p < \infty$ then from $d_p(P_n,P)\to 0$ (almost surely) one can immediately infer the existence of a sequence of transport plans such that
\begin{equation} \label{eq:Prelim:Trans:Stag}
\| \mathrm{Id} - T_n \|_{L^p(X;P)}^p = \int_X |x-T_n(x)|^p \; P(\mathrm{d}x) \to 0
\end{equation}
as $n\to \infty$.
We call any sequence of transportation maps $\{T_n\}$ that satisfy~\eqref{eq:Prelim:Trans:Stag} stagnating.

In the next definition we use stagnating transport maps to define piecewise constant approximations of functions on $\Psi_n$ in order to define a suitable notion of convergence.

\begin{mydef}
\label{def:Prelim:Trans:TLpConv}
Let $\mu_n\in L^p(\Psi_n)=L^p(X;P_n)$ and $\mu\in L^p(X;P)$ where $P$ is a probability measure and $P_n=\frac{1}{n} \sum_{i=1}^n \delta_{\xi_i}$ is the empirical measure.
We say $\mu_n\to \mu$ in $TL^p(X)$ if
\begin{equation} \label{eq:Prelim:Trans:ConvTL1}
\| \mu_n\circ T_n - \mu \|_{L^p(X;P)}^p = \int_X |\mu_n(T_n(x)) - \mu(x) |^p \; P(\textrm{d}x) \to 0
\end{equation}
for any sequence of stagnating transportation maps $T_n:X \to \Psi_n$.
Similarly $\mu_n$ is bounded in $TL^p$ if $\| \mu_n\circ T_n \|_{L^p}$ is bounded and $\mu_n$ is precompact in $TL^p$ if $\mu_n\circ T_n$ is precompact in $L^p$.
\end{mydef}

One can show that if~\eqref{eq:Prelim:Trans:ConvTL1} holds for one sequence of stagnating transport maps then it holds for any sequence of stagnating transport maps~\cite[Lemma 3.5]{trillos15}.

Since it is assumed that $P$ has density $\rho$ which is bounded above and below by positive constants then~\eqref{eq:Prelim:Trans:Stag} is equivalent to $\|\mathrm{Id} - T_n\|_{L^p(X)} \to 0$ and $L^p(X;P)=L^p(X)$.
This paper focus on the case where $p=1$ however it is straightforward to consider the case $1\leq p < \infty$, see Section~\ref{subsec:MainRes:Comments}.

Now consider an arbitrary $T:X\to X$ and a measurable $\varphi:X\to [0,\infty)$.
Recall that
\[ \int_X \varphi(x) \; T_{\#}P(\mathrm{d} x) := \sup \left\{ \int_X s(x) T_{\#}P(\mathrm{d} x) : 0\leq s\leq \varphi \text{ and } s \text{ is simple} \right\}. \]
If $s(x) = \sum_{i=1}^N a_i \delta_{U_i}(x)$ where $a_i = s(x)$ for any $x\in U_i$ then
\[ \int_X s(x) T_{\#}P(\mathrm{d} x) = \sum_{i=1}^N a_i T_{\#} P(U_i) = \sum_{i=1}^N a_i P(V_i) \]
for $V_i=T^{-1}(U_i)$.
Note that $a_i = s(x)$ for any $x\in T(U_i)$.
From this it is not hard to see the following change of variables formula:
\begin{equation} \label{eq:Prelim:Trans:ChangeVar}
\int_X \varphi(x) \; T_{\#} P(\mathrm{d} x) = \int_X \varphi(T(x)) \; P (\mathrm{d} x).
\end{equation}
A particularly useful version of this will be when $T_{\#} P(\mathrm{d} x) = P_n(\mathrm{d} x)$ where $P_n$ is the empirical measure.
In which case~\eqref{eq:Prelim:Trans:ChangeVar} implies
\[ \frac{1}{n} \sum_{i=1}^n \varphi(\xi_i) = \int_X \varphi(T(x)) \; P(\mathrm{d} x). \]

As an aside one can generalize the $TL^p$ framework to pairs $(\mu,P)$ where $\mu\in L^p(X;P)$. 
Let us define
\[ d_{TL^p}((P,\mu),(Q,\zeta)) = \inf_{\pi\in\Gamma(P,Q)} \left\{ \left(\int_{X\times Y} |x-y|^p \; \pi(\mathrm{d}x,\mathrm{d}y)\right)^{\frac{1}{p}} + \left( \int_{X \times Y} |\mu(x)-\zeta(y)|^p \; \pi(\mathrm{d}x,\mathrm{d}y) \right)^{\frac{1}{p}} \right\}. \]
Let $P$ have density with respect to the Lebesgue measure and take a sequence of measures $P_n$ defined on a common space $X$ (where we do not assume that $P_n$ is the empirical measure).
Then $(P_n,f_n)\to (P,f)$ in $TL^p$ is equivalent to weak convergence of measures (due to the first term) and $\| \mu_n\circ T_n - \mu \|_{L^p(X;P)}\to 0$ (due to the second term), see~\cite[Proposition 3.6]{trillos15}.
Since we are working with the empirical measure then with probability one $P_n$ converges weakly to $P$.
Hence the first term plays no role in this paper and so is not included.

Our proofs require a bound on the supremum norm of $T_n-\mathrm{Id}$ given by the following theorem.

\begin{theorem}
\label{thm:Prelim:Trans:TransMapBound}
Let $X\subset\mathbb{R}^d$ with $d\geq 1$ be open, connected and bounded with Lipschitz boundary.
Let $P$ be a probability measure on $X$ with density (with respect to Lebesgue) $\rho$ which is bounded above and below by positive constants.
Let $\xi_1,\xi_2,\dots$ be a sequence of independent random variables with distribution $P$ and let $P_n$ be the empirical measure.
Then there exists a constant $C>0$ such that almost surely there exists a sequence of transportation maps $\{T_n\}_{n=1}^\infty$ from $P$ to $P_n$ such that
\[ \limsup_{n\to \infty} \frac{\|T_n-\mathrm{Id}\|_{L^\infty(X)}}{\delta_n} \leq C \]
where
\[ \delta_n = \left\{ \begin{array}{ll} \frac{\sqrt{\log\log n}}{\sqrt{n}} & \text{if } d=1 \\ \frac{(\log n)^{\frac{3}{4}}}{\sqrt{n}} & \text{if } d=2 \\ \frac{(\log n)^{\frac{1}{d}}}{n^{\frac{1}{d}}} & \text{if } d\geq 3. \end{array} \right. \]
\end{theorem}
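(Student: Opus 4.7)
The plan is to reduce the statement to the case where $P$ is the uniform measure on the unit cube, where the result is classical, and then transfer the bound by conjugation with a bi-Lipschitz map. Throughout, note that since $X$ is open, bounded, connected and has Lipschitz boundary, we may fix a bi-Lipschitz homeomorphism $\Psi:[0,1]^d\to X$; then $\Psi_{\#}\mathcal{L}$ and the uniform measure $U_X$ on $X$ differ only by a bounded density, so without loss of generality we may work on $[0,1]^d$ with uniform measure.

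The first step is the uniform case: given i.i.d. $\eta_1,\dots,\eta_n\sim U_{[0,1]^d}$ and empirical measure $U_n$, classical results give an almost sure transport map $S_n$ from $U_{[0,1]^d}$ to $U_n$ with $\|S_n-\mathrm{Id}\|_{L^\infty} = O(\delta_n)$. In $d=1$ the rate $\sqrt{\log\log n/n}$ is the Chung--Smirnov law of the iterated logarithm for the Kolmogorov--Smirnov statistic (one simply takes $S_n$ to be the monotone rearrangement). In $d=2$ the rate $(\log n)^{3/4}/\sqrt{n}$ is the Leighton--Shor matching theorem, and in $d\geq 3$ the rate $(\log n)^{1/d}/n^{1/d}$ is the Shor--Yukich bound on the optimal $\infty$-matching cost; in both cases the infimum in \eqref{eq:Prelim:Trans:dinfty} is attained by a transport map by compactness. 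The second step handles a non-uniform density: we produce a bi-Lipschitz map $\Phi:X\to X$ with $\Phi_{\#} U_X = P$, let $\eta_i := \Phi^{-1}(\xi_i)$ (which are i.i.d.\ uniform on $X$), apply step one to obtain $S_n$ from $U_X$ to $\frac{1}{n}\sum\delta_{\eta_i}$, and define $T_n := \Phi\circ S_n\circ\Phi^{-1}$. Then $T_n$ pushes $P$ forward to $P_n$, and for all $x\in X$,
\[ |T_n(x)-x| \;=\; |\Phi(S_n(\Phi^{-1}(x)))-\Phi(\Phi^{-1}(x))| \;\leq\; \mathrm{Lip}(\Phi)\,\|S_n-\mathrm{Id}\|_{L^\infty} \;\leq\; C\,\delta_n, \]
giving the theorem with a constant that depends only on $X$, $\|\rho\|_\infty$ and $\inf_X\rho$.

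The main obstacle is constructing the bi-Lipschitz transport $\Phi$ when $\rho$ is merely continuous and bounded above and below. Smooth densities can be handled directly by the Dacorogna--Moser flow method, which solves $\mathrm{div}(v\rho)=1-\rho$ with a vector field $v\in C^{0,1}$ and produces the transport as the time-one flow of $v$, yielding a bi-Lipschitz map whose Lipschitz constants depend only on $\|\rho\|_\infty$ and $\inf_X\rho$. For continuous $\rho$, one approximates $\rho$ by smooth densities $\rho_k$ (e.g. via mollification, preserving total mass on $X$ through a small uniform correction) with $\|\rho_k\|_\infty$ and $\inf\rho_k$ controlled uniformly, constructs bi-Lipschitz $\Phi_k$ with equi-bounded Lipschitz constants, and extracts a uniformly convergent subsequence using Arzelà--Ascoli. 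The limit $\Phi$ is bi-Lipschitz with the same constants and pushes $U_X$ to $P$, closing the argument.
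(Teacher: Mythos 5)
Your overall strategy --- reduce to the uniform measure on the cube, where the Leighton--Shor ($d=2$) and Shor--Yukich ($d\geq 3$) matching results give the stated rates, and then conjugate by a measure-preserving change of variables --- is the strategy of the reference~\cite{trillos15b} that the paper simply cites for $d\geq 2$, and your $d=1$ argument (quantile coupling plus the Chung--Smirnov law of the iterated logarithm) coincides with the paper's own. However, both of your transfer steps contain genuine gaps. First, an open, bounded, connected set with Lipschitz boundary need not be bi-Lipschitz homeomorphic to $[0,1]^d$: an annulus in $\mathbb{R}^2$ is a smooth bounded connected domain that is not even homeomorphic to a square. So the reduction from $X$ to the cube cannot be achieved by a single global bi-Lipschitz chart; one must instead partition $X$ into finitely many bi-Lipschitz images of cubes and then do additional (nontrivial) work to move the random excess mass between the pieces, which is a substantial part of the cited proof.

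Second, and more seriously, the bi-Lipschitz Moser coupling $\Phi$ with $\Phi_{\#}U_X=P$ need not exist in the stated generality. The theorem assumes only that $\rho$ is bounded above and below by positive constants, and even for continuous $\rho$ it is known (Burago--Kleiner, and independently McMullen, for $L^\infty$ densities; Burago--Kleiner later for continuous ones) that a density bounded above and below need not be the push-forward of Lebesgue measure under \emph{any} bi-Lipschitz map --- bi-Lipschitz solvability of the prescribed-Jacobian problem requires a Dini-type condition on the modulus of continuity. Your approximation argument does not repair this: the Lipschitz constants produced by the Dacorogna--Moser flow for the mollified densities $\rho_k$ depend on the modulus of continuity (or H\"older norms) of $\rho_k$, not only on $\|\rho_k\|_{\infty}$ and $\inf\rho_k$, so the family $\Phi_k$ is not equi-Lipschitz and Arzel\`a--Ascoli cannot be applied. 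This is precisely why~\cite{trillos15b} treats the non-uniform density by a comparison argument at the level of the transport estimates rather than by a global bi-Lipschitz change of variables; as written, your proof of the case $d\geq 2$ does not go through.
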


\begin{proof}
The proof for $d=2$ and $d\geq 3$ can be found in~\cite{trillos15b}.
For $d=1$ the result follows from considering the transportation map defined by $T_n(x)=x^{(i)}_n$ for $x\in (x^{(i-1)}_n,x^{(i)}_n]$ where $x^{(i)}_n$ is defined by $P((-\infty,x_n^{(i)}]) = \frac{i}{n}$ for $i=1,\dots,n-1$ and $x^{(0)}_n=-\infty$ and $x_n^{(n)}=\infty$.
One then has $\|T_n-\mathrm{Id}\|_{L^\infty(X)} = \|P_n-P\|_{L^\infty(X)}$ which by the law of the iterated logarithm has the stated rate of convergence.
\end{proof}

\section{The Compactness Property \label{sec:Compact}}

In this section we prove the first part of Theorem~\ref{thm:MainRes:Cons} by establishing that sequences bounded in $\mathcal{E}_n$ are precompact in $TL^1$ with cluster points in $L^1(X;\{0,1\})$.
Our proofs compare $\mathcal{E}_n$ to its continuous analogue $\mathcal{C}_\epsilon:L^1(X) \to [0,\infty]$ defined by
\begin{equation} \label{eq:Compact:Ec}
\mathcal{C}_\epsilon(\mu) = \frac{1}{\epsilon} \int_X V(\mu(x)) \rho(x) \; \mathrm{d} x + \frac{1}{\epsilon^{d+1}} \int_{X^2} \varphi\circ\pi\left(\frac{x-y}{\epsilon}\right) \left|\mu(x) - \mu(y)\right| \rho(x) \rho(y) \; \mathrm{d}x \; \mathrm{d}y.
\end{equation}
The transport map $T_n$ between the measures $P_n$ and $P$ is used to compare a function $\mu_n:\Psi_n\to \mathbb{R}$ to its continuous version $\tilde{\mu}_n:X\to \mathbb{R}$, i.e. $\tilde{\mu}_n = \mu_n \circ T_n$.
One then uses standard results to conclude the compactness of $\tilde{\mu}_n$ in $L^1$ and show that this implies compactness of $\mu_n$ in $TL^1$.

\begin{proposition}
\label{prop:Compact:ComSecondTermData}
Under the same conditions as Theorem~\ref{thm:MainRes:Cons}.
If $\mu_n\in L^1(\Psi_n)$ is a sequence with
\[ \sup_{n\in\mathbb{N}} \mathcal{E}_n(\mu_n)<\infty \]
then, with probability one, there exists a subsequence $\mu_{n_m}$ and $\mu\in L^1(X;\{0,1\})$ such that $\mu_{n_m}\to \mu$ in $TL^1$.
\end{proposition}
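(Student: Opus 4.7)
The plan is to reduce the anisotropic compactness claim to the isotropic graph total variation compactness result of~\cite{trillos15} by minorizing $\eta = \varphi\circ\pi$ by a radial kernel, and to use the penalty $V$ separately to control the $L^1$ norm and to force cluster points into $\{0,1\}$.

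First I would establish a $TL^1$ bound. Let $\{T_n\}$ be a sequence of stagnating transport maps from $P$ to $P_n$ supplied by Theorem~\ref{thm:Prelim:Trans:TransMapBound} and put $\tilde{\mu}_n := \mu_n\circ T_n$. Condition~9 of Definition~\ref{def:MainRes:Admissible} gives $r,\tau > 0$ with $|t| \leq V(t)/\tau + r$ for every $t \in \mathbb{R}$, so by the change-of-variables identity~\eqref{eq:Prelim:Trans:ChangeVar},
\begin{equation*}
\int_X |\tilde{\mu}_n|\rho\,\mathrm{d} x \;=\; \frac{1}{n}\sum_{i=1}^n |\mu_n(\xi_i)| \;\leq\; \frac{1}{\tau n}\sum_{i=1}^n V(\mu_n(\xi_i)) + r \;\leq\; \frac{\epsilon_n}{\tau}\mathcal{E}_n(\mu_n) + r,
\end{equation*}
which is uniformly bounded; since $\rho$ is bounded below on $X$, this also gives $\sup_n \|\tilde{\mu}_n\|_{L^1(X)} < \infty$.

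Second, I would minorize the anisotropic nonlocal term by an isotropic one. Applying property~6 of Definition~\ref{def:MainRes:Admissible} at $x=0$ together with $\eta(0)>0$ (condition~5) yields an $r_0>0$ and a constant $\kappa>0$ such that $\eta(z) \geq \kappa$ for $|z| \leq r_0$. Letting $\tilde{\varphi}(s) := \kappa(1 - s/r_0)_+$, which is Lipschitz, decreasing, compactly supported and positive at the origin, one has $\eta(z) \geq \tilde{\varphi}(|z|)$ for all $z$ and hence
\begin{equation*}
\mathcal{E}_n(\mu_n) \;\geq\; GTV_n(\mu_n) \;\geq\; \frac{1}{\epsilon_n n^2}\sum_{i,j} \frac{1}{\epsilon_n^d}\, \tilde{\varphi}\!\left(\frac{|\xi_i-\xi_j|}{\epsilon_n}\right) |\mu_n(\xi_i) - \mu_n(\xi_j)|.
\end{equation*}
The right-hand side is an isotropic graph total variation satisfying the hypotheses of the compactness theorem in~\cite{trillos15}; combined with the $TL^1$ boundedness established above, that theorem produces a subsequence $\mu_{n_m}$ and some $\mu \in L^1(X)$ with $\mu_{n_m} \to \mu$ in $TL^1$, i.e.\ $\tilde{\mu}_{n_m} \to \mu$ in $L^1(X)$.

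Finally I would show that $\mu \in \{0,1\}$ almost everywhere. From $\sup_n \mathcal{E}_n(\mu_n) < \infty$ we have
\begin{equation*}
\int_X V(\tilde{\mu}_n)\rho\,\mathrm{d} x \;=\; \frac{1}{n}\sum_{i=1}^n V(\mu_n(\xi_i)) \;\leq\; \epsilon_n \mathcal{E}_n(\mu_n) \;\longrightarrow\; 0.
\end{equation*}
Extracting a further subsequence along which $\tilde{\mu}_{n_m} \to \mu$ pointwise a.e., the continuity and non-negativity of $V$ combined with Fatou's lemma give $\int_X V(\mu)\rho\,\mathrm{d} x = 0$, and since $\rho$ is bounded below we conclude $V(\mu)=0$ a.e., whence $\mu \in L^1(X;\{0,1\})$ by condition~7. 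The main obstacle is the construction of the isotropic minorant $\tilde{\varphi}\leq\eta$, because $\eta$ itself need not be radially symmetric or continuous (for example the indicator case in Proposition~\ref{prop:MainRes:SatAss} is covered): this step genuinely requires the near-origin stability encoded by condition~6 together with $\eta(0)>0$. Once it is in place, the rest of the argument is assembled from the cited isotropic compactness, the change-of-variables identity, and a routine Fatou argument.
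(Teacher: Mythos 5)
Your proof is correct, but it runs along a genuinely different track from the one in the paper. Both arguments begin the same way, by minorizing $\eta=\varphi\circ\pi$ with a radial kernel supported near the origin (you use the Lipschitz triangular profile $\kappa(1-s/r_0)_+$; the paper uses the top-hat $a\,\mathbb{I}_{|x|<b}$, both justified by conditions 5--6). From there the paper transfers the \emph{entire} discrete energy, penalty term included, onto the continuum nonlocal functional $\mathcal{C}_{\tilde\epsilon_n}(\mu_n\circ T_n;\tilde\eta)$ of~\eqref{eq:Compact:Ec}: it chooses a slightly shrunken scale $\tilde\epsilon_n=\epsilon_n-2\|T_n-\mathrm{Id}\|_{L^\infty}/b$ so that the transport maps can be pushed through the kernel, and then invokes the compactness theorem of~\cite{alberti98} for such continuum functionals, which delivers in one stroke a subsequence converging in $L^1$ to a limit that is already $\{0,1\}$-valued. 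You instead stay at the discrete level and invoke the isotropic graph-total-variation compactness of~\cite{trillos15}, which requires you to supply the $TL^1$ bound separately --- you do this via the coercivity condition 9 on $V$ --- and to prove binary-valuedness of the limit separately, via $\frac{1}{n}\sum_i V(\mu_n(\xi_i))\le\epsilon_n\mathcal{E}_n(\mu_n)\to 0$, an a.e.-convergent sub-subsequence, and Fatou. Your route delegates the delicate kernel-versus-transport-map bookkeeping (the $\epsilon_n\mapsto\tilde\epsilon_n$ rescaling) to~\cite{trillos15} at the cost of two extra, but entirely routine, steps; the paper's route gets the $\{0,1\}$-valued limit for free from~\cite{alberti98} but must carry out that rescaling explicitly. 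Both are complete; your decomposition is in fact the one hinted at in the paper's own remark in Section~\ref{subsec:MainRes:Comments} that the compactness "can be reduced to the isotropic case where the results of~\cite{trillos15} apply."
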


\begin{proof}
Recall the following preliminary compactness result.
If $\{\tilde{\mu}_n\}_{n=1}^\infty$ is a sequence in $L^1(X)$ such that
\begin{equation} \label{eq:Compact:cont-fe-seq}
\sup_{n\in\mathbb{N}} \mathcal{C}_{\epsilon_n}(\tilde{\mu}_n) < \infty,
\end{equation}
where $\mathcal{C}_{\epsilon_n}:L^1(X)\to [0,\infty]$ is defined by~\eqref{eq:Compact:Ec},
then there exists a subsequence $\tilde{\mu}_{n_m}$ and $\mu\in L^1(X;\{0,1\})$ such that $\tilde{\mu}_{n_m}\to \mu$ in $L^1$.
A proof can be found, for example, in \cite{alberti98}.

For clarity we will denote the dependence of $\eta=\varphi\circ\pi$ on $\mathcal{E}_n$ by $\mathcal{E}_n(\cdot;\eta)$ and let $\eta_\epsilon(x) = \frac{1}{\epsilon^d}\eta(x/\epsilon)$.
Since $\eta$ is continuous at 0 and $\eta(0)>0$ there exists $b>0$ and $a>0$ such that $\eta(x)\geq a$ for all $|x|<b$.
Define $\tilde{\eta}$ by $\tilde{\eta}(x) = a$ for $|x|<b$ and $\tilde{\eta}(x) = 0$ otherwise.
As $\tilde{\eta}\leq \eta$ then $\mathcal{E}_n(\mu_n;\eta)\geq \mathcal{E}_n(\mu_n;\tilde{\eta})$.

Let $T_n$ be such that $T_{n\#}P = P_n$ and the conclusions of Theorem~\ref{thm:Prelim:Trans:TransMapBound} hold.
We want to show $\{\mu_n\circ T_n\}_{n=1}^\infty$ satisfies
\begin{equation} \label{eq:Compact:munTnEnbound}
\sup_{n\in\mathbb{N}} \mathcal{C}_{\tilde{\epsilon}_n}(\mu_n\circ T_n ;\tilde{\eta}) < \infty
\end{equation}
for a sequence $\tilde{\epsilon}_n>0$ with $\tilde{\epsilon}_n\to 0$ and $\frac{\epsilon_n}{\tilde{\epsilon}_n}\to 1$ that will be chosen shortly.
If so then by~\eqref{eq:Compact:cont-fe-seq} there exists a subsequence $\mu_{n_m}\circ T_{n_m}$ and $\mu\in L^1(X;\{0,1\})$ such that $\mu_{n_m}\circ T_{n_m} \to \mu$ in $L^1$ and therefore $\mu_{n_m}\to \mu$ in $TL^1$.
To show~\eqref{eq:Compact:munTnEnbound} we write
\begin{align*}
\mathcal{C}_{\tilde{\epsilon}_n}(\mu_n\circ T_n;\tilde{\eta}) & = \frac{1}{\tilde{\epsilon}_n}\int_X V(\mu_n(T_n(x)) \rho(x) \; \mathrm{d} x \\
 & \quad \quad \quad + \frac{1}{\tilde{\epsilon}_n^{d+1}} \int_{X^2} \tilde{\eta}\left(\frac{y-x}{\tilde{\epsilon}_n}\right) \left| \mu_n(T_n(x)) - \mu_n(T_n(y)) \right| \rho(x) \rho(y) \; \mathrm{d} y \; \mathrm{d} x.
\end{align*}
The first term is uniformly bounded, since by~\eqref{eq:Prelim:Trans:ChangeVar}
\[ \frac{1}{\tilde{\epsilon}_n}\int_X V(\mu_n(T_n(x)) \rho(x) \; \mathrm{d} x = \frac{\epsilon_n}{\tilde{\epsilon}_n} \frac{1}{\epsilon_n} \sum_{i=1}^n V(\mu_n(\xi_i)). \]
Assume that $\left|\frac{y-x}{\tilde{\epsilon}_n}\right| < b$ then
\begin{align*}
\left| T_n(x) - T_n(y)\right| & \leq \left| T_n(x) - x \right| + \left| x-y \right| + \left| y - T_n(y) \right| \\
 & \leq 2\|\mathrm{Id} - T_n\|_{L^\infty(X)} + \left|x-y\right| \\
 & \leq 2\|\mathrm{Id} - T_n\|_{L^\infty(X)} + b\tilde{\epsilon}_n.
\end{align*}
Choose $\tilde{\epsilon}_n$ satisfying
\[ 2\|T_n-\mathrm{Id}\|_{L^\infty(X)} + b\tilde{\epsilon}_n = b\epsilon_n, \]
i.e. $\tilde{\epsilon}_n = \epsilon_n - \frac{2\|T_n-\mathrm{Id}\|_{L^\infty(X)}}{b}$.
By the decay assumption on $\epsilon_n$ for $n$ sufficiently large (with probability one) $\tilde{\epsilon}_n>0$, $\tilde{\epsilon}_n\to 0$ and $\frac{\tilde{\epsilon}_n}{\epsilon_n}\to 1$.
Also
\[ \tilde{\eta}\left( \frac{x-y}{\tilde{\epsilon}_n} \right) = a \Rightarrow \tilde{\eta}\left( \frac{T_n(x)-T_n(y)}{\epsilon_n} \right) = a. \]
Therefore
\[ \tilde{\eta}_{\tilde{\epsilon}_n}(y-x) = \frac{1}{\tilde{\epsilon}_n^d} \tilde{\eta}\left( \frac{x-y}{\tilde{\epsilon}_n} \right) \leq \frac{1}{\tilde{\epsilon}_n^d} \tilde{\eta}\left( \frac{T_n(x)-T_n(y)}{\epsilon_n} \right) = \frac{\epsilon_n^d}{\tilde{\epsilon}_n^d} \tilde{\eta}_{\epsilon_n}\left(T_n(x) - T_n(y)\right). \]
So,
\begin{align*}
& \frac{1}{\tilde{\epsilon}_n} \int_{X^2} \tilde{\eta}_{\tilde{\epsilon}_n}(y-x) \left| \mu_n(T_n(x)) - \mu_n(T_n(y)) \right| \rho(x) \rho(y) \; \mathrm{d} y \; \mathrm{d} x \\
& \quad \quad \quad \leq \frac{\epsilon_n^d}{\tilde{\epsilon}_n^{d+1}} \int_{X^2} \tilde{\eta}_{\epsilon_n}\left(T_n(x) - T_n(y)\right) \left| \mu_n(T_n(x)) - \mu_n(T_n(y)) \right| \rho(x) \rho(y) \; \mathrm{d} y \; \mathrm{d} x \\
& \quad \quad \quad = \frac{\epsilon_n^d}{\tilde{\epsilon}_n^{d+1}} \sum_{i,j} \tilde{\eta}_{\epsilon_n}\left(\xi_i - \xi_j\right) \left| \mu_n(\xi_i) - \mu_n(\xi_j) \right| \quad \text{by~\eqref{eq:Prelim:Trans:ChangeVar}} \\
& \quad \quad \quad = \frac{\epsilon_n^{d+1}}{\tilde{\epsilon}_n^{d+1}} \mathcal{E}_n(\mu_n;\tilde{\eta}) \\
& \quad \quad \quad \leq \frac{\epsilon_n^{d+1}}{\tilde{\epsilon}_n^{d+1}} \mathcal{E}_n(\mu_n;\eta).
\end{align*}
It follows that the second term is also uniformly bounded in $n$.
\end{proof}

\section{\texorpdfstring{$\Gamma$}{Gamma}-Convergence \label{sec:Gamma}}

The main result of this section is Theorem~\ref{thm:Gamma:EntoEinfty}, which states that $\mathcal{E}_n$ $\Gamma$-converge to $\mathcal{E}_\infty$.

\begin{theorem}
\label{thm:Gamma:EntoEinfty}
Under the same conditions as Theorem~\ref{thm:MainRes:Cons}
\[ \mathcal{E}_\infty = \Glim_{n\to \infty} \mathcal{E}_n \]
in the $TL^1$ sense and with probability one.
\end{theorem}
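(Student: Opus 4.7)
The plan is to prove the liminf and limsup halves of $\Gamma$-convergence separately, fixing throughout a probability-one event on which Theorem~\ref{thm:Prelim:Trans:TransMapBound} supplies transport maps $T_n:X\to\Psi_n$ with $\|T_n-\mathrm{Id}\|_{L^\infty(X)}\ll \epsilon_n$. Following the remark after Theorem~\ref{thm:Prelim:TV:Measure}, I would first carry the argument out for Lipschitz $\rho$ and then pass to continuous $\rho$ via the monotone-convergence trick on $\rho_k \nearrow \rho$. Writing $\tilde\mu_n := \mu_n \circ T_n$, convergence $\mu_n \to \mu$ in $TL^1$ becomes $\tilde\mu_n \to \mu$ in $L^1(X)$, and I would exploit this representation in both inequalities.

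For the liminf, I would first rule out non-binary limits. If $\liminf_n \mathcal{E}_n(\mu_n) < \infty$, then
$$\int_X V(\tilde\mu_n)\rho\,\mathrm{d}x \;=\; \epsilon_n\cdot\frac{1}{n\epsilon_n}\sum_{i=1}^n V(\mu_n(\xi_i)) \;\to\; 0,$$
so continuity of $V$ combined with Fatou and the assumption $V^{-1}(0)=\{0,1\}$ force $\mu\in L^1(X;\{0,1\})$, while in the non-binary case the liminf inequality is trivial. For binary $\mu$, I would reverse the transport-side change-of-variables used in Proposition~\ref{prop:Compact:ComSecondTermData}: pushing the discrete double sum back to the continuous side and substituting $z=(y-x)/\tilde\epsilon_n$ for a perturbed scale $\tilde\epsilon_n=\epsilon_n-2\|T_n-\mathrm{Id}\|_{L^\infty}/b$, condition~6 of Definition~\ref{def:MainRes:Admissible} (used now in its lower-bound form) lets me replace $\eta_{\epsilon_n}(T_n(x)-T_n(y))$ by $(1-o(1))\eta_{\tilde\epsilon_n}(x-y)$, yielding
$$\mathcal{E}_n(\mu_n)\;\geq\;(1+o(1))\int_{\mathbb{R}^d}\eta(z)\,f_n(z)\,\mathrm{d}z,\qquad f_n(z)=\frac{1}{\tilde\epsilon_n}\int_X\bigl|\tilde\mu_n(x+\tilde\epsilon_n z)-\tilde\mu_n(x)\bigr|\rho^2(x)\,\mathrm{d}x.$$
Fatou in $z$ together with Proposition~\ref{prop:Prelim:TV:Liminf} in $x$ then gives $\liminf_n\mathcal{E}_n(\mu_n)\geq \int_{\mathbb{R}^d}\eta(z)\,TV_z(\mu;\rho)\,\mathrm{d}z = TV(\mu;\rho,\eta)=\mathcal{E}_\infty(\mu)$.

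For the recovery sequence I plan a density argument. Any $\mu=\mathbb{I}_E\in BV(X;\rho,\eta)\cap L^1(X;\{0,1\})$ can be approximated in $L^1$ by indicator functions $\mu_k=\mathbb{I}_{E_k}$ of polyhedral (or $C^2$) sets with $TV(\mu_k;\rho,\eta)\to TV(\mu;\rho,\eta)$, so by lower semicontinuity of the $\Gamma$-limsup it suffices to build a recovery for each $\mu_k$. For these, take the sharp discretization $\mu_{k,n}:=\mu_k|_{\Psi_n}$: the potential term vanishes identically since $\mu_{k,n}\in\{0,1\}$, while $\mu_{k,n}\to\mu_k$ in $TL^1$ because $\mu_k$ is continuous outside the null set $\partial E_k$ and $\|T_n-\mathrm{Id}\|_{L^\infty}\to 0$. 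The nonlocal term equals $GTV_n(\mu_k)$; its expectation converges to $\mathcal{E}_\infty(\mu_k)$ by the classical nonlocal-to-local TV limit for smooth boundaries (using $y=x+\epsilon_n z$ and the definition of $\sigma$), and its variance is $O\bigl((n^2\epsilon_n^{d+1})^{-1}\bigr)$ as a two-sample $U$-statistic, which together with the decay assumption on $\epsilon_n$ and Borel--Cantelli yields a.s. convergence $\mathcal{E}_n(\mu_{k,n})\to\mathcal{E}_\infty(\mu_k)$. A diagonal extraction over $k$ and $n$ then gives the desired recovery sequence for $\mu$.

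The main obstacle will be the liminf step, specifically the two-sided transport-to-difference-quotient comparison. Unlike the compactness proof, where one could afford to dominate $\eta$ by a smaller radial cap $\tilde\eta$, here one needs $(T_n(x)-T_n(y))/\epsilon_n$ to supply a lower bound for $\eta$ evaluated at $(x-y)/\tilde\epsilon_n$, up to multiplicative factors tending to $1$ uniformly in $x,y$. Condition~6 of Definition~\ref{def:MainRes:Admissible} is calibrated for exactly this, but tying $c_\delta,\alpha_\delta\to 1$ to the transport rate $\delta_n/\epsilon_n\to 0$ demands a careful choice of $\tilde\epsilon_n$ and an application of Fatou that survives the $\alpha_\delta$-dilation. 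A secondary technical point is the uniform-in-$k$ concentration required for the diagonal in the limsup, which is what makes the rate assumption $\epsilon_n^{-1}=o(f_d(n))$ necessary.
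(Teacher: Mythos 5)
Your liminf argument is essentially the paper's: rule out non-binary limits via the potential term, push the double sum to the continuum with the transport maps, use condition~6 of Definition~\ref{def:MainRes:Admissible} in its lower-bound form with $\delta=2\|T_n-\mathrm{Id}\|_{L^\infty}/\epsilon_n$, and conclude with Fatou and Proposition~\ref{prop:Prelim:TV:Liminf}, doing Lipschitz $\rho$ first and then the monotone approximation $\rho_k\nearrow\rho$. (The paper additionally works on compact $X'\subset\subset X$ to kill the boundary term and to control the $\rho(y+\epsilon_n z)\rho(y)-\rho^2(y)$ error, but these are the technicalities you already anticipate.) That half is fine.

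The gap is in your recovery sequence. You propose to show $GTV_n(\mu_k)\to TV(\mu_k;\rho,\eta)$ almost surely by combining the variance bound for the $U$-statistic with Borel--Cantelli. This step fails as stated: the variance of $GTV_n(\mu_k)$ is of order $\frac{1}{n\epsilon_n}+\frac{1}{n^2\epsilon_n^{d+1}}$ (you have dropped the dominant non-degenerate projection term $O(1/(n\epsilon_n))$, which is exactly the $\kappa_1$ term in Theorem~\ref{thm:Rate:ConvMeanSq}), and since $\epsilon_n\to 0$ one has $\frac{1}{n\epsilon_n}\geq\frac{1}{n}$ eventually, so $\sum_n\mathrm{Var}(GTV_n(\mu_k))=\infty$ for \emph{every} admissible $\epsilon_n$. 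Chebyshev plus Borel--Cantelli along the full sequence therefore never closes; exponential (Hoeffding/Bernstein) bounds for the $U$-statistic are also unavailable here because the kernel has sup-norm $O(\epsilon_n^{-d-1})$ and the admissibility condition only guarantees $n\epsilon_n^{d}\gg\log n$, not $n\epsilon_n^{d+1}\gg\log n$. One could repair this with fourth-moment bounds or a subsequence-plus-interpolation argument, but that is substantial extra work you have not supplied. The paper avoids the issue entirely: for polyhedral $\mu$ it takes $\mu_n=\mu|_{\Psi_n}$ and bounds $GTV_n(\mu_n)$ \emph{deterministically} (conditionally on the transport maps of Theorem~\ref{thm:Prelim:Trans:TransMapBound}) by $\frac{\alpha_n^{d+1}}{c_n}\,CTV_n(\nu_n)$, where $CTV_n$ is the continuum nonlocal total variation at scale $\tilde\epsilon_n=\alpha_n\epsilon_n$ and $\nu_n=\mu_n\circ T_n$; it then compares $CTV_n(\nu_n)$ with $CTV_n(\zeta_n)$ for a smooth approximation $\zeta_n$ of $\mu$ chosen with $\|\mu-\zeta_n\|_{L^1}=o(\tilde\epsilon_n)$, and bounds $CTV_n(\zeta_n)\leq TV(\zeta_n;\rho,\eta)+o(1)$ by the fundamental theorem of calculus. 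The only probabilistic input is the $L^\infty$ transport estimate, so no concentration of the random functional is needed. You should either adopt that route or supply the missing higher-moment estimates.
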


The proof is a consequence of Lemmas~\ref{lem:Gamma:EntoEinftyliminf} and~\ref{lem:Gamma:EntoEinftylimsup}.

\begin{lemma}[The lim inf inequality]
\label{lem:Gamma:EntoEinftyliminf}
Under the same conditions as Theorem~\ref{thm:MainRes:Cons} if $\mu\in L^1(X)$ and $\mu_n\to \mu$ in $TL^1$ then
\[ \mathcal{E}_\infty(\mu) \leq \liminf_{n\to \infty} \mathcal{E}_n(\mu_n) \]
with probability one.
\end{lemma}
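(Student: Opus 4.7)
The plan is to reduce the discrete liminf inequality to the (known) continuum nonlocal-to-local liminf inequality using the transport map machinery, in close analogy with the proof of Proposition~\ref{prop:Compact:ComSecondTermData}. First I would dispose of the trivial case by assuming $\liminf_n \mathcal{E}_n(\mu_n) < \infty$, passing to a subsequence (not relabelled) that attains the liminf, and invoking Proposition~\ref{prop:Compact:ComSecondTermData} to conclude that the $TL^1$ limit $\mu$ belongs to $L^1(X;\{0,1\})$; this is precisely what is needed to avoid $\mathcal{E}_\infty(\mu)=+\infty$. Since the potential term $\frac{1}{n\epsilon_n}\sum_i V(\mu_n(\xi_i))$ is nonnegative, it can be dropped and the task reduces to showing
\[
\liminf_{n\to\infty} \frac{1}{\epsilon_n n^{2}} \sum_{i,j} \eta_{\epsilon_n}(\xi_i-\xi_j)\,|\mu_n(\xi_i)-\mu_n(\xi_j)| \ \geq\ TV(\mu;\rho,\eta),
\]
with $\eta=\varphi\circ\pi$.

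Next I would introduce the transport maps $T_n$ from Theorem~\ref{thm:Prelim:Trans:TransMapBound}, set $\tilde{\mu}_n=\mu_n\circ T_n$ (so $\tilde{\mu}_n\to\mu$ in $L^1(X)$ by definition of $TL^1$), and apply the change of variables~\eqref{eq:Prelim:Trans:ChangeVar} to rewrite the sum as a double integral against $\rho(x)\rho(y)\,\mathrm{d}x\,\mathrm{d}y$. The key point is the kernel comparison: because $\|T_n-\mathrm{Id}\|_{L^\infty}\le C\delta_n$ with $\delta_n/\epsilon_n\to 0$ by Condition~3 of Definition~\ref{def:MainRes:Admissible}, one has
\[
\left|\tfrac{T_n(x)-T_n(y)}{\epsilon_n}-\tfrac{x-y}{\epsilon_n}\right| \ \le\ \tfrac{2\|T_n-\mathrm{Id}\|_{L^\infty}}{\epsilon_n} \ \xrightarrow[n\to\infty]{} \ 0
\]
uniformly in $x,y$. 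Fixing $\delta>0$ and applying Condition~6 with $a=(x-y)/\epsilon_n$ in the role of ``$x$'' and $b=(T_n(x)-T_n(y))/\epsilon_n$ in the role of ``$y$'', one obtains, for all $n$ large enough and with $\tilde{\epsilon}_n:=\epsilon_n/\alpha_\delta$,
\[
\eta_{\epsilon_n}\bigl(T_n(x)-T_n(y)\bigr) \ \ge\ \tfrac{c_\delta}{\alpha_\delta^{d}}\,\eta_{\tilde{\epsilon}_n}(x-y),
\]
so that
\[
\mathcal{E}_n(\mu_n) \ \ge\ \tfrac{c_\delta}{\alpha_\delta^{d+1}}\cdot \tfrac{1}{\tilde{\epsilon}_n^{d+1}} \int_{X^{2}} \eta\!\left(\tfrac{x-y}{\tilde{\epsilon}_n}\right) \bigl|\tilde{\mu}_n(x)-\tilde{\mu}_n(y)\bigr|\,\rho(x)\rho(y)\,\mathrm{d}x\,\mathrm{d}y.
\]

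The final step is the standard nonlocal-to-local liminf for continuous functionals. Performing the substitution $y=x+\tilde{\epsilon}_n z$ and applying Fubini turns the right-hand side into $\int_{\mathbb{R}^d}\eta(z)\,h_n(z)\,\mathrm{d}z$, where
\[
h_n(z) \ =\ \tfrac{1}{\tilde{\epsilon}_n}\int_{X\cap(X-\tilde{\epsilon}_n z)} \bigl|\tilde{\mu}_n(x+\tilde{\epsilon}_n z)-\tilde{\mu}_n(x)\bigr|\,\rho(x)\rho(x+\tilde{\epsilon}_n z)\,\mathrm{d}x.
\]
Since $\rho$ is continuous and bounded, $\rho(x+\tilde{\epsilon}_n z)\to\rho(x)$ uniformly on the region where $\eta(z)\ne 0$, and Proposition~\ref{prop:Prelim:TV:Liminf} applied to $\tilde{\mu}_n\to\mu$ (after absorbing the $\rho(x+\tilde{\epsilon}_n z)\rho(x)$ factor into $\rho^{2}(x)$ in the limit) gives $\liminf_n h_n(z)\ge TV_z(\mu;\rho)$ for each $z$. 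An application of Fatou's lemma, using the compact support of $\eta$ (Condition~4), yields
\[
\liminf_{n\to\infty}\int_{\mathbb{R}^d}\eta(z)h_n(z)\,\mathrm{d}z\ \ge\ \int_{\mathbb{R}^d}\eta(z)\,TV_z(\mu;\rho)\,\mathrm{d}z\ =\ TV(\mu;\rho,\eta).
\]
Combining with the kernel bound and sending $\delta\to 0$ so that $c_\delta,\alpha_\delta\to 1$ produces $\liminf_n \mathcal{E}_n(\mu_n)\ge TV(\mu;\rho,\eta)=\mathcal{E}_\infty(\mu)$.

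The main obstacle is the kernel comparison: Condition~6 is delicate, and one must verify that the difference of arguments $(T_n(x)-T_n(y))/\epsilon_n$ versus $(x-y)/\epsilon_n$ is uniformly within $\delta$ before we can pull out the factor $c_\delta$, which hinges critically on $\delta_n=o(\epsilon_n)$ from the admissibility assumption. Secondary technical points are the replacement of $\rho(x)\rho(x+\tilde{\epsilon}_n z)$ by $\rho^{2}(x)$ under the integral and the handling of boundary effects in $X$; both are controlled by the compact support of $\eta$, the uniform continuity of $\rho$, and the Lipschitz boundary assumption, exactly as in the classical nonlocal $\Gamma$-convergence arguments cited in Section~\ref{subsec:Prelim:TV}.
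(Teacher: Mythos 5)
Your overall architecture matches the paper's proof: dispose of the trivial case, use Proposition~\ref{prop:Compact:ComSecondTermData} to get $\mu\in L^1(X;\{0,1\})$, drop the potential term, push the sum onto $X^2$ via the transport maps and~\eqref{eq:Prelim:Trans:ChangeVar}, compare the kernel at $(T_n(x)-T_n(y))/\epsilon_n$ with the kernel at a rescaled $(x-y)$ using Condition~6 (your choice of a fixed $\delta$ followed by $\delta\to 0$ at the end is a harmless variant of the paper's $\delta_n\to 0$), and finish with Fatou and Proposition~\ref{prop:Prelim:TV:Liminf}. The boundary issue you defer (the integral over $X\cap(X-\tilde\epsilon_n z)$ versus $X$) is handled in the paper by exhausting $X$ with compact subsets $X'_m$ and invoking monotone convergence; your sketch is compatible with that and I would not count it as a gap.

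There is, however, one genuine gap: your justification for replacing $\rho(x)\rho(x+\tilde\epsilon_n z)$ by $\rho^2(x)$. You assert that $\rho(x+\tilde\epsilon_n z)\to\rho(x)$ \emph{uniformly} in $x$ for $z$ in the support of $\eta$, but Definition~\ref{def:MainRes:Admissible} only assumes $\rho$ is continuous and bounded above and below on the \emph{open} set $X$; it is not assumed uniformly continuous or continuous up to $\partial X$, so this uniform convergence does not follow (a bounded continuous density oscillating near the boundary defeats the claim). This is precisely why the paper's proof is split into two steps: first it assumes $\rho$ is Lipschitz, in which case the replacement error $a_n$ is bounded by $\frac{\mathrm{Lip}(\rho)\tilde M}{\inf_X\rho}\,\epsilon_n\,GTV_n(\mu_n)\to 0$ using the boundedness of the graph total variation; then it passes to general continuous $\rho$ by approximating from below with the Lipschitz inf-convolutions $\rho_k(x)=\inf_{y\in X}\left(\rho(y)+k|x-y|\right)$, using the monotonicity $GTV_n(\mu_n;\rho)\geq GTV_n(\mu_n;\rho_k)$, the Lipschitz case, and the monotone convergence theorem together with the representation $TV(\mu;\rho_k,\eta)=\int_X\rho_k^2(x)\sigma(x)\,\hat\lambda(\mathrm{d}x)$ from Theorem~\ref{thm:Prelim:TV:Measure}. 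Your argument as written is complete only under the additional hypothesis that $\rho$ is uniformly continuous on $X$; to cover the stated assumptions you need to insert this two-step approximation.
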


\begin{proof}
Recall $\eta=\varphi\circ\pi$.
Let $\mu_n\in L^1(\Psi_n), \mu\in L^1(X)$ with $\mu_n\to \mu$ in $TL^1$.
Let $\nu_n = \mu_n \circ T_n \in L^1(X)$ where $T_n:X\to \Psi_n$ is as in Theorem~\ref{thm:Prelim:Trans:TransMapBound} (with probability one) so $\nu_n\to \mu$ in $L^1(X)$.
Without loss of generality we assume that
\[ \liminf_{n\to \infty} \mathcal{E}_n(\mu_n) < \infty \]
else there is nothing to prove.
By Theorem~\ref{prop:Compact:ComSecondTermData} $\mu\in L^1(X;\{0,1\})$ hence the proof is complete if
\begin{equation} \label{eq:Gamma:GTVliminfTV}
\liminf_{n\to \infty} GTV_n(\mu_n) \geq TV(\mu;\rho,\eta)
\end{equation}
where $GTV_n$ is defined by~\eqref{eq:Intro:Finite:TVG}.

We show~\eqref{eq:Gamma:GTVliminfTV} in two steps.
\begin{description}
\item[Step 1.] Assume $\rho$ is Lipschitz continuous on $X$.
\item[Step 2.] Generalise to continuous densities.
\end{description}

\paragraph*{Step 1.}
Let $X^\prime$ be a compact subset of $X$.
We have
\begin{align*}
GTV_n(\mu_n) & = \frac{1}{\epsilon_n} \frac{1}{n^2} \sum_{i,j} W_{ij} \left| \mu_n(\xi_i) - \mu_n(\xi_j) \right| \\
 & = \frac{1}{\epsilon_n^{d+1}} \int_{X^2} \eta\left(\frac{T_n(x)-T_n(y)}{\epsilon_n}\right) \left| \nu_n(x)-\nu_n(y) \right| \rho(x) \rho(y) \; \mathrm{d} x \; \mathrm{d} y \quad \text{using~\eqref{eq:Prelim:Trans:ChangeVar}} \\
 & \geq \frac{1}{\epsilon_n^{d+1}} \int_{X^\prime} \int_X \eta\left(\frac{T_n(x)-T_n(y)}{\epsilon_n}\right) \left| \nu_n(x)-\nu_n(y) \right| \rho(x) \rho(y) \; \mathrm{d} x \; \mathrm{d} y \\
 & = \frac{1}{\epsilon_n} \int_{X^\prime} \int_{y+\epsilon_n z\in X} \eta\left(\frac{T_n(y+\epsilon_n z)-T_n(y)}{\epsilon_n}\right) \left| \nu_n(y+\epsilon_n z)-\nu_n(y) \right| \rho(y+\epsilon_n z) \rho(y) \; \mathrm{d} z \; \mathrm{d} y \\
 & = \frac{1}{\epsilon_n} \int_{X^\prime} \int_{y+\epsilon_n z\in X} \eta\left(\frac{T_n(y+\epsilon_n z) - T_n(y)}{\epsilon_n}\right) \left| \nu_n(y+\epsilon_n z)-\nu_n(y) \right| \rho^2(y) \; \mathrm{d} z \; \mathrm{d}y + a_n
\end{align*}
where
\[ a_n = \frac{1}{\epsilon_n} \int_{X^\prime} \int_{y+\epsilon_n z\in X} \eta\left(\frac{T_n(y+\epsilon_n z) - T_n(y)}{\epsilon_n}\right) \left| \nu_n(y+\epsilon_n z)-\nu_n(y) \right| \rho(y) \left( \rho(y+\epsilon_n z) - \rho(y) \right) \; \mathrm{d} z \; \mathrm{d}y. \]
Assume that the support of $\varphi$ is contained within $B(0,M)$ then the following calculation shows that for any $y\in X$ that if $|z|\geq \tilde{M}$ where $\tilde{M}=\frac{M}{\alpha} + 2\sup_{n\in\mathbb{N}}\frac{\|T_n-\mathrm{Id}\|_{L^\infty}}{\epsilon_n} <\infty$ then $\eta\left(\frac{T_n(y+\epsilon_n z) - T_n(y)}{\epsilon_n}\right) = 0$:
\begin{align*}
M & \leq \alpha \epsilon_n |z| - 2 \alpha \frac{\|T_n-\mathrm{Id}\|_{L^\infty}}{\epsilon_n} \\
 & \leq \frac{\alpha}{\epsilon_n} \bigg( \epsilon_n |z| - |T_n(y+\epsilon_n z)-T_n(y) - \epsilon_n z | \bigg) \\
 & \leq \frac{\alpha}{\epsilon_n} \bigg| T_n(y+\epsilon_n z) - T_n(y) \bigg| \\
 & \leq \pi\left(\frac{T_n(y+\epsilon_n z)-T_n(y)}{\epsilon_n}\right).
\end{align*}

It follows that
\begin{align*}
|a_n| & \leq \frac{\mathrm{Lip}(\rho) \tilde{M}}{\inf_{x\in X} \rho(x)} \int_X \int_{y+\epsilon z \in X} \eta\left(\frac{T_n(y+\epsilon_n z) - T_n(y)}{\epsilon_n}\right) \left| \nu_n(y+\epsilon_n z)-\nu_n(y) \right| \rho(y) \rho(y+\epsilon_n z) \; \mathrm{d} z \; \mathrm{d} y \\
 & = \frac{\tilde{M}\mathrm{Lip}(\rho)}{\inf_{x\in X} \rho(x)} \epsilon_n GTV_n(\mu_n) \\
 & \to 0 \quad \text{as } n \to \infty.
\end{align*}

Let $\alpha_n$ and $c_n$ be as in Definition~\ref{def:MainRes:Admissible} for $\delta=\frac{2\|T_n-\mathrm{Id}\|_{L^\infty}}{\epsilon_n}$ then since
\[ \left| \frac{T_n(y+\epsilon_n z) - T_n(y)}{\epsilon_n} - z \right| \leq \frac{1}{\epsilon_n} \left( \left| T_n(y+\epsilon_n z) - y-\epsilon_n z \right| + \left| T_n(y) - y \right| \right) \leq \frac{2\|T_n-\mathrm{Id}\|_{L^\infty}}{\epsilon_n} \]
we have that
\[ \eta\left(\frac{T_n(y+\epsilon_n z)-T_n(y)}{\epsilon_n}\right) \geq c_n \eta\left(\alpha_n z\right). \]
So,
\begin{align*}
GTV_n(\mu_n) & \geq \frac{c_n}{\epsilon_n} \int_{X^\prime} \int_{y+\epsilon_n x \in X} \eta(\alpha_n z) \left| \nu_n(y+\epsilon_n z) - \nu_n(y)\right| \rho^2(y) \; \mathrm{d} z \; \mathrm{d} y + o(1) \\
 & = \frac{c_n}{\alpha_n^d \epsilon_n} \int_{X^\prime} \int_{y+\frac{\epsilon_n \tilde{z}}{\alpha_n}\in X} \eta(\tilde{z}) \left| \nu_n\left(y+\frac{\epsilon_n\tilde{z}}{\alpha_n}\right) - \nu_n(y) \right| \rho^2(y) \; \mathrm{d} \tilde{z} \; \mathrm{d} y + o(1) \\
 & = \frac{c_n}{\alpha^d_n \epsilon_n} \int_{X^\prime} \int_{\mathbb{R}^d} \eta(\tilde{z}) \left| \nu_n\left(y+\frac{\epsilon_n\tilde{z}}{\alpha_n}\right) - \nu_n(y) \right| \rho^2(y) \; \mathrm{d} \tilde{z} \; \mathrm{d} y - b_n + o(1)
\end{align*}
where
\[ b_n = \frac{c_n}{\alpha^d_n \epsilon_n} \int_{X^\prime} \int_{y+\frac{\epsilon_n\tilde{z}}{\alpha_n}\not\in X} \eta(\tilde{z}) \left| \nu_n\left(y+\frac{\epsilon_n\tilde{z}}{\alpha_n}\right) - \nu_n(y) \right| \rho^2(y) \; \mathrm{d} \tilde{z} \; \mathrm{d} y \]
By Fatou's lemma, Proposition~\ref{prop:Prelim:TV:Liminf} and since $c_n,\alpha_n\to 1$ we have
\begin{align*}
& \liminf_{n\to \infty} \frac{c_n}{\alpha_n^d\epsilon_n} \int_{X^\prime} \int_{\mathbb{R}^d} \eta(\tilde{z}) \left| \nu_n\left(y+\frac{\epsilon_n \tilde{z}}{\alpha_n}\right)-\nu_n(y) \right| \rho^2(y) \; \mathrm{d} \tilde{z} \; \mathrm{d}y \\
& \quad \quad \quad \geq \int_{X^\prime} \eta(z) TV_z(\mu;\rho) \; \mathrm{d} z \\
& \quad \quad \quad = TV(\mu;\rho,\eta,X^\prime).
\end{align*}

Since $X^\prime$ and $X^c$ are both closed and disjoint then $\tau:=\mathrm{dist}(X^\prime,X^c)>0$ and therefore if $y\in X^\prime$ and $y+\frac{\epsilon_n \tilde{z}}{\alpha_n}\not\in X$ then $|\tilde{z}|\geq \frac{\alpha_n\tau}{\epsilon_n}$.
Choose $n$ sufficiently large such that $\frac{\alpha_n\tau}{\epsilon_n}\geq M$ where the support of $\eta$ is contained in $B(0,M)$.
Then for any pair $y,\tilde{z}$ satisfying the above condition we have $\eta(z)=\varphi\circ\pi(\tilde{z})=0$.
Hence $b_n=0$ for $n$ sufficiently large.

We have shown that
\[ \liminf_{n\to \infty} GTV_n(\mu_n) \geq TV(\mu;\rho,\varphi\circ\pi,X^\prime). \]
If we consider a sequence $X^\prime_m$ such that $X^\prime_m\subset X^\prime_{m+1}$ and $\mathbb{I}_{X\prime}\to \mathbb{I}_X$ pointwise then by the monotone convergence theorem $TV(\mu;\rho,\varphi\circ\pi,X^\prime_m) \to TV(\mu;\rho,\varphi\circ\pi,X)$.
This completes step 1.

\paragraph*{Step 2.}
Denote the dependence of $\rho$ on $GTV_n$ by $GTV_n(\cdot;\rho)$.
Assume $\rho:X\to [0,\infty)$ is continuous and let $\rho_k:\mathbb{R}^d\to [0,\infty)$ be defined by
\begin{equation} \label{eq:Gamma:rhok}
\rho_k(x) = \left\{ \begin{array}{ll} \inf_{y\in X} \left( \rho(y) + k|x-y| \right) & \text{if } x\in X \\ 0 & \text{otherwise.} \end{array} \right.
\end{equation}
Clearly $\rho_k(x)\leq \rho(x)$ for all $x\in X$.
For any $y\in X\setminus B\left(x,\frac{\rho(x)}{k}\right)$ we have $|x-y|\geq \frac{\rho}{k}$ and therefore
\[ \rho(y) + k|x-y| \geq \rho(y) + \rho(x) > \rho(x) \geq \rho_k(x). \]
Then it follows that any (approximate) minimizer $y\in X$ of~\eqref{eq:Gamma:rhok} must be contained in $B\left(x,\frac{\rho(x)}{k}\right)$.
Hence $\rho_k(x) = \inf\left\{ \rho(y) + k|x-y| \, : \, y \in B\left(x,\frac{\rho(x)}{k}\right) \right\}$ and therefore
\[ \rho(x) \geq \rho_k(x) \geq \inf\left\{ \rho(y): y\in B\left(x,\frac{\rho(x)}{k}\right) \right\}. \]
As $\rho$ is bounded above on $X$ then the previous inequality implies $\rho_k(x)\to \rho(x)$ for each $x\in X$.
It is also clear that $\rho_k(x) \geq \inf_{x\in X} \rho(x) >0$.
Furthermore, for $x,z\in X$
\begin{align*}
\rho_k(x) - \rho_k(z) & = \inf_{y_1\in X} \sup_{y_2\in X} \rho(y_1) - \rho(y_2) + k\left(|x-y_1| - |z-y_2| \right) \\
 & \leq \sup_{y_2\in X} k\left(|x-y_2| - |z-y_2| \right) \\
 & \leq k|x-z|
\end{align*}
so $\rho_k$ is Lipschitz in $X$.
By step 1
\[ \liminf_{n\to \infty} GTV_n(\mu_n;\rho) \geq \liminf_{n\to \infty} GTV_n(\mu_n;\rho_k) \geq TV(\mu;\rho_k,\eta). \]
By Theorem~\ref{thm:Prelim:TV:Measure}
\[ TV(\mu;\rho_k,\eta) = \int_X \rho_k^2(x) \sigma(x) \; \hat{\lambda}(\mathrm{d} x). \]
And therefore by the monotone convergence theorem one has
\[\lim_{k\to \infty} TV(\mu;\rho_k,\eta) = \int_X \rho(x)^2 \sigma(x) \; \hat{\lambda}( \mathrm{d} x) = TV(\mu;\rho,\eta) \]
which completes the proof.
\end{proof}

For $\mu\not\in L^1(X;\{0,1\})$ the recovery sequence is trivial as $\mathcal{E}_\infty(\mu)=\infty$.
For $\mu\in L^1(X;\{0,1\})$ we show that it is enough to prove the existence of a recovery sequence when $\mu$ is a polyhedral function (defined below).
Recall that $\mathcal{H}^k$ is the $k$-dimensional Hausdorff measure.

\begin{mydef}
\label{def:Gamma:PolyFun}
A ($d$-dimensional) polyhedral set in $\mathbb{R}^d$ is an open set $F$ whose boundary is a Lipschitz manifold contained in the union of finitely many affine hyperplanes.
We say $\mu\in BV(X;\{0,1\})$ is a polyhedral function if there exists a polyhedral set $F$ such that $\partial F$ is transversal to $\partial X$ (i.e. $\mathcal{H}^{d-1}(\partial F\cup \partial X) = 0$) and $\mu(x) = 1$ for $x\in X\cap F$, $\mu(x) = 0$ for $x\in X\setminus F$.
\end{mydef}

\begin{lemma}[The existence of a recovery sequence for Theorem~\ref{thm:Gamma:EntoEinfty}]
\label{lem:Gamma:EntoEinftylimsup}
Under the same conditions as Theorem~\ref{thm:MainRes:Cons} for any $\mu\in L^1(X)$ there exists a sequence $\mu_n\to \mu$ in $TL^1$ such that
\begin{equation} \label{eq:Gamma:limsupEntoEinfty}
\mathcal{E}_\infty(\mu) \geq \limsup_{n\to \infty} \mathcal{E}_n(\mu_n)
\end{equation}
with probability one.
\end{lemma}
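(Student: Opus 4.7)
The plan is to reduce to polyhedral indicator functions and exhibit the recovery sequence by sharp restriction to $\Psi_n$. If $\mu\notin L^1(X;\{0,1\})$ or $TV(\mu;\rho,\eta)=\infty$, then $\mathcal{E}_\infty(\mu)=\infty$ and \eqref{eq:Gamma:limsupEntoEinfty} is automatic: pick any mollification $\bar\mu$ of $\mu$ and set $\mu_n(\xi_i)=\bar\mu(\xi_i)$, which converges to $\mu$ in $TL^1$ since $\bar\mu\circ T_n\to\mu$ in $L^1$ for a stagnating $T_n$. So I may assume $\mu=\mathbb{I}_E$ with $TV(\mathbb{I}_E;\rho,\eta)<\infty$. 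By the classical density result for anisotropic perimeter (cf.~Baldo or~\cite{alberti98}), there exist polyhedral $\mu^{(k)}=\mathbb{I}_{F_k}$ with $\partial F_k$ transversal to $\partial X$ such that $\mu^{(k)}\to\mu$ in $L^1$ and $\mathcal{E}_\infty(\mu^{(k)})\to\mathcal{E}_\infty(\mu)$. As $TL^1$ is metrisable, it suffices, for each $k$, to construct $\mu_n^{(k)}:\Psi_n\to\mathbb{R}$ with $\mu_n^{(k)}\to\mu^{(k)}$ in $TL^1$ and $\limsup_n\mathcal{E}_n(\mu_n^{(k)})\leq\mathcal{E}_\infty(\mu^{(k)})$; a diagonal extraction then provides the recovery sequence for $\mu$.

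For a polyhedral $\mathbb{I}_F$ I take the sharp restriction $\mu_n(\xi_i):=\mathbb{I}_F(\xi_i)$. Since $\mu_n\in\{0,1\}$ pointwise, $V(\mu_n)\equiv 0$ and $\mathcal{E}_n(\mu_n)=GTV_n(\mu_n)$; convergence $\mu_n\to\mathbb{I}_F$ in $TL^1$ follows from dominated convergence applied to $\mathbb{I}_F\circ T_n\to\mathbb{I}_F$ (using $T_n\to\mathrm{Id}$ a.e.\ and $\mathcal{L}^d(\partial F)=0$). Using the change of variables~\eqref{eq:Prelim:Trans:ChangeVar} gives
\[ GTV_n(\mu_n) = \frac{1}{\epsilon_n^{d+1}} \int_{X^2} \eta\left(\frac{T_n(x)-T_n(y)}{\epsilon_n}\right) |\mathbb{I}_F(T_n x)-\mathbb{I}_F(T_n y)|\,\rho(x)\rho(y)\,dx\,dy. \]
Applying condition~6 of Definition~\ref{def:MainRes:Admissible} (in the form $\eta(y)\leq c_\delta^{-1}\eta(\alpha_\delta^{-1} x)$ obtained by exchanging the roles of $x,y$ in the stated inequality), together with $2\|T_n-\mathrm{Id}\|_\infty/\epsilon_n\to 0$ from Theorem~\ref{thm:Prelim:Trans:TransMapBound}, yields $\eta((T_nx-T_ny)/\epsilon_n)\leq c_n^{-1}\eta(\alpha_n^{-1}(x-y)/\epsilon_n)$ with $c_n,\alpha_n\to 1$. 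After substituting $z=\alpha_n^{-1}(x-y)/\epsilon_n$ and reducing the continuous density $\rho$ to a Lipschitz one via a supremal convolution analogous to $\rho_k$ in~\eqref{eq:Gamma:rhok} (dominating $\rho$ from above so that the direction of the inequality is preserved under the $k\to\infty$ limit), the problem reduces to proving
\[ \limsup_{n\to\infty} \frac{1}{\epsilon_n}\int_X\int_{\mathbb{R}^d} \eta(z)\,|\mathbb{I}_F(x+\epsilon_n z)-\mathbb{I}_F(x)|\,\rho^2(x)\,dz\,dx \leq TV(\mathbb{I}_F;\rho,\eta). \]

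Evaluating this limit explicitly for a polyhedral $F$ is the technical core and the main obstacle. I localise to a tubular neighbourhood of each face $\Gamma\subset\partial F$ with outward unit normal $\nu$, split $x$ into coordinates tangential and normal to $\Gamma$, and rescale the normal coordinate $s\mapsto \epsilon_n s$. The indicator $|\mathbb{I}_F(x+\epsilon_n z)-\mathbb{I}_F(x)|$ becomes $\mathbb{I}_{s(s+\nu\cdot z)<0}$ up to errors supported near the boundary of the neighbourhood; integrating out $s$ contributes a factor $|\nu\cdot z|$, and passing to the limit (using Lipschitz continuity of $\rho$ tangentially along $\Gamma$) gives the face contribution $\int_\Gamma \sigma(\nu)\,\rho^2(x)\,d\mathcal{H}^{d-1}(x)$. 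Summing over the finitely many faces yields $TV(\mathbb{I}_F;\rho,\eta)$. The delicate points in this step are (i) localising rigorously, while showing that the lower-dimensional strata (edges, corners) of $\partial F$ contribute negligibly — their $\epsilon_n$-tubes have volume $O(\epsilon_n^{2})$, which is $o(\epsilon_n)$ after the $1/\epsilon_n$ prefactor; (ii) tracking the small prefactors $c_n^{-1},\alpha_n^{-1}\to 1$ from the anisotropic continuity condition and the transport map displacement $\delta_n/\epsilon_n\to 0$, so the exact surface constant $\sigma(\nu)$ is recovered in the limit; and (iii) ensuring the transversality of $\partial F_k$ to $\partial X$ prevents any boundary contribution from $\partial X$.
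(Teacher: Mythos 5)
Your proposal is correct in outline and follows the paper up to the polyhedral reduction, the sharp restriction to $\Psi_n$, the use of condition~6 of Definition~\ref{def:MainRes:Admissible} to pass from $\eta\left(\frac{T_n(x)-T_n(y)}{\epsilon_n}\right)$ to $\eta\left(\frac{x-y}{\alpha_n\epsilon_n}\right)$, and the sup-convolution reduction to Lipschitz $\rho$ --- but it diverges at the technical core. The paper does \emph{not} evaluate the nonlocal functional on the polyhedral indicator directly; instead it introduces a smooth approximation $\zeta_n\in C^\infty\cap BV$ with $\|\mu-\zeta_n\|_{L^1}=o(\tilde{\epsilon}_n)$ and $TV(\zeta_n;\rho,\eta)\to TV(\mu;\rho,\eta)$, bounds $CTV_n(\zeta_n)\le TV(\zeta_n;\rho,\eta)+o(1)$ by writing $\zeta_n(x)-\zeta_n(y)=\int_0^1\nabla\zeta_n(y+s(x-y))\cdot(x-y)\,\mathrm{d}s$, and then shows $|CTV_n(\nu_n)-CTV_n(\zeta_n)|\to 0$ using $\|\nu_n-\mu\|_{L^1}=O(\|T_n-\mathrm{Id}\|_{L^\infty})=o(\epsilon_n)$. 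Your tubular-neighbourhood computation (face contributions give $\int_\Gamma\sigma(\nu)\rho^2\,\mathrm{d}\mathcal{H}^{d-1}$, edge and corner tubes have volume $O(\epsilon_n^2)$ and hence contribute $O(\epsilon_n)$ after the $1/\epsilon_n$ prefactor) is a legitimate alternative; indeed the paper itself carries out essentially this calculation in the proof of Theorem~\ref{thm:Rate:ConvMean} for the special case $\eta=\mathbb{I}_{B(0,1)}$, $\rho\equiv 1$, and it generalises to compactly supported $\eta$ and Lipschitz $\rho$. What the paper's smoothing route buys is that it never needs the explicit geometry of $F$ beyond the single volume bound for the $\|T_n-\mathrm{Id}\|_{L^\infty}$-tube of $\partial F$; what your route buys is a quantitative $O(\epsilon_n)$ error, which is exactly the bias the paper extracts separately in Section~\ref{sec:Rate}.

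Two points you should make explicit. First, in the trivial case a single fixed mollification $\bar{\mu}$ gives $\bar{\mu}\circ T_n\to\bar{\mu}$, not $\mu$; you need the mollification scale to vanish with $n$ (or another diagonal argument). Second, your reduced inequality replaces $\left|\mathbb{I}_F(T_n(x))-\mathbb{I}_F(T_n(y))\right|$ by $\left|\mathbb{I}_F(x)-\mathbb{I}_F(y)\right|$ without comment; the discrepancy is controlled by $\frac{C}{\epsilon_n}\|\mathbb{I}_F\circ T_n-\mathbb{I}_F\|_{L^1(X)}=O\!\left(\frac{\|T_n-\mathrm{Id}\|_{L^\infty(X)}}{\epsilon_n}\right)=o(1)$, using the same tube-volume estimate you invoke for the $TL^1$ convergence together with Assumption~3. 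Both are routine repairs, not gaps in the strategy.
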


\begin{proof}
Without loss of generality assume $\mu\in BV(X;\rho,\eta)\cap L^1(X;\{0,1\})$.
By the following (diagonalization) argument it is enough to prove the lemma for polyhedral functions.
Suppose the lemma holds for polyhedral functions and let $\mu\in BV(X;\rho,\eta) \cap L^1(X;\{0,1\})$.
There exists a sequence of polyhedral functions $\mu_m\to \mu$ in $L^1$ and $TV(\mu_m;\rho,\eta)\to TV(\mu;\rho,\eta)$, for example see~\cite[Section 9.4.1 Lemma 1]{maz'ya10}.
By passing to a subsequence we may assume that
\[ \| \mu_m - \mu \|_{L^1} \leq \frac{1}{m} \quad \quad \text{and} \quad \quad \left| \mathcal{E}_\infty(\mu_m) - \mathcal{E}_\infty(\mu) \right| \leq \frac{1}{m}. \]
Now for each $m$ there exists a sequence $\mu^{(m)}_n$ and a limit $\mu^{(m)}$ (as $n\to \infty$ for each $m$) such that
\[ \mu^{(m)}_n \to \mu^{(m)} \quad \text{in } L^1 \text{ as } n\to \infty \quad \quad \text{and} \quad \quad \mathcal{E}_\infty(\mu^{(m)}) \geq \limsup_{n\to \infty} \mathcal{E}_n(\mu^{(m)}_n). \]
For each $m$ there exists $N_m$ such that
\[ \left\| \mu^{(m)}_n - \mu^{(m)} \right\|_{L^1} \leq \frac{1}{m} \quad \quad \text{and} \quad \quad \mathcal{E}_\infty(\mu^{(m)}) \geq \mathcal{E}_n(\mu^{(m)}_n) - \frac{1}{m} \]
for all $n \geq N_m$.
Let $\mu_m=\mu^{(m)}_{N_m}$ then we have
\[ \left\| \mu_m - \mu \right\|_{L^1} \leq \left\| \mu^{(m)}_{N_m} - \mu^{(m)} \right\|_{L^1} + \left\| \mu^{(m)} - \mu \right\|_{L^1} \leq \frac{2}{m} \quad \quad \text{and} \quad \quad \mathcal{E}_\infty(\mu) \geq \mathcal{E}_\infty(\mu^{(m)}) - \frac{1}{m} \geq \mathcal{E}_n(\mu_m) - \frac{2}{m}. \]
Hence it is enough to prove the theorem for polyhedral functions.


Therefore assume $\mu\in BV(X;\{0,1\})$ is a polyhedral function corresponding to the polyhedral set $F$, i.e. $\mu = \mathbb{I}_F$.
Let $\mu_n$ be the restriction of $\mu$ to $\Psi_n$.
Define $T_n$ as in Theorem~\ref{thm:Prelim:Trans:TransMapBound} and use it to create a partition of $X$, $X=\cup_{i=1}^n T_n^{-1}(\xi_i)$.
If $x,y\in T_n^{-1}(\xi_i)$ then
\[ | x-y | \leq |x-T_n(x)| + |T_n(x) - T_n(y)| + |T_n(y) - y| \leq 2\|\mathrm{Id} - T_n\|_{L^\infty(X)}. \]
Let $x\in T_n^{-1}(\xi_i)$ and assume $\mathrm{dist}(\partial F,x)> 2\|\mathrm{Id} - T_n\|_{L^\infty}$.
If $y\in T_n^{-1}(\xi_i)$ then $\mu(y)=\mu(x)$ (since $T_n^{-1}(\xi_i)\subset B(x,2\|\mathrm{Id}-T_n\|_{L^\infty(X)})$ and $B(x,2\|\mathrm{Id}-T_n\|_{L^\infty(X)})\cap \partial F = \emptyset$).
Therefore
\[ \int_{T_n^{-1}(\xi_i)} \left| \mu_n(T_n(y)) - \mu(y) \right| \rho(y) \; \mathrm{d} y = 0. \]
In particular
\[ \int_X \left| \mu_n(T_n(y)) - \mu(y) \right| \rho(y) \; \mathrm{d} y = \int_{X_n} \left| \mu_n(T_n(y)) - \mu(y) \right| \rho(y) \; \mathrm{d} y \leq \|\rho\|_{L^\infty(X)} \mathrm{Vol}(X_n) \]
where
\[ X_n = \left\{ y\in X \; : \; \mathrm{dist}(\partial F,y) \leq 2\|\mathrm{Id} - T_n\|_{L^\infty(X)} \right\}. \]
Clearly $\mathrm{Vol}(X_n) = O(\|\mathrm{Id} - T_n\|_{L^\infty(X)}) = o(1)$ and therefore $\mu_n\to \mu$ in $TL^1$.
Define $\nu_n=\mu_n\circ T_n$ then since $\nu_n,\mu\in L^1(X;\{0,1\})$ we have that~\eqref{eq:Gamma:limsupEntoEinfty} is equivalent to
\[ TV(\mu;\rho,\eta) \geq \limsup_{n\to \infty} GTV_n(\mu_n). \]

We complete the proof in two steps.
\begin{description}
\item[Step 1.] Assume $\rho$ is Lipschitz on $X$.
\item[Step 2.] Generalise to continuous densities.
\end{description}

\paragraph*{Step 1.}
We can write assumption 3 in Definition~\ref{def:MainRes:Admissible} as for any $\delta>0$ there exists $c_\delta,\alpha_\delta$ such that if $|x-y|<\delta$ then $\eta(x)\leq \frac{1}{c_\delta} \eta\left(\frac{w}{\alpha_\delta}\right)$ and $c_\delta,\alpha_\delta\to 1$ as $\delta\to 0$.

Let $c_n,\alpha_n$ be such constants for $\delta=\frac{2\|T_n-\mathrm{Id}\|_{L^\infty}}{\epsilon_n}$ since
\[ \left| \frac{T_n(x)-T_n(y)}{\epsilon_n} - \frac{x-y}{\epsilon_n} \right| \leq 2\frac{2\|T_n-\mathrm{Id}\|_{L^\infty}}{\epsilon_n} = \delta \]
then
\[ \eta\left(\frac{T_n(x)-T_n(y)}{\epsilon_n}\right) \leq \frac{1}{c_n} \eta\left(\frac{x-y}{\tilde{\epsilon}_n}\right) \]
where $\tilde{\epsilon}=\epsilon_n\alpha_n$.

So,
\begin{align*}
GTV_n(\mu_n) & = \frac{1}{\epsilon_n^{d+1}} \int_{X^2} \eta\left(\frac{T_n(x)-T_n(y)}{\epsilon}\right) \left| \nu_n(x) - \nu_n(y) \right| \rho(x) \rho(y) \; \mathrm{d} x \; \mathrm{d} y \\
 & \leq \frac{\alpha_n^{d+1}}{c_n\tilde{\epsilon}_n^{d+1}} \int_{X^2} \eta\left(\frac{x-y}{\tilde{\epsilon}_n}\right) \left| \nu_n(x) - \nu_n(y) \right| \rho(x) \rho(y) \; \mathrm{d} x \; \mathrm{d} y \\
 & = \frac{\alpha_n^{d+1}}{c_n} CTV_n(\nu_n)
\end{align*}
with $CTV_n$ defined below.
Let us approximate $\mu$ by a sequence $\zeta_n\in C^\infty(\mathbb{R}^d)\cap BV(X)$ such that $\zeta_n\to \mu$ in $L^1(X)$ and $TV(\zeta_n;\rho,\eta) \to TV(\mu;\rho,\eta)$.
Without loss of generality assume that $\zeta_n(x) = 0$ for all $x\in \mathbb{R}^d\setminus X$ and $\|\mu-\zeta_n\|_{L^1(X)} = o(\tilde{\epsilon}_n)$ (by recourse to a subsequence of $\zeta_n$ and relabelling).
Then
\begin{align*}
CTV_n(\zeta_n) & := \frac{1}{\tilde{\epsilon}_n^{d+1}} \int_{X^2} \eta\left(\frac{x-y}{\tilde{\epsilon}_n}\right) \left| \zeta_n(x) - \zeta_n(y) \right| \rho(x) \rho(y) \; \mathrm{d} x \; \mathrm{d} y \\
 & = \frac{1}{\tilde{\epsilon}_n^{d+1}} \int_{X^2} \eta\left(\frac{x-y}{\tilde{\epsilon}_n}\right) \left| \int_0^1 \nabla\zeta_n(y+s(x-y)) \cdot(x-y) \; \mathrm{d} s \right| \rho(x) \rho(y) \; \mathrm{d} x \; \mathrm{d} y \\
 & \leq \frac{1}{\tilde{\epsilon}_n^{d+1}} \int_X \int_0^1 \int_X \eta\left(\frac{x-y}{\tilde{\epsilon}_n}\right) \left| \nabla\zeta_n(y+s(x-y)) \cdot(x-y) \right| \rho(x) \rho(y) \; \mathrm{d} x \; \mathrm{d} s \; \mathrm{d} y \\
 & = \int_X \int_0^1 \int_{\mathcal{Z}_{nhs}} \eta(z) \left|\nabla \zeta_n(h) \cdot z \right| \rho(h+(1-s)\tilde{\epsilon}_n z) \rho(h-\tilde{\epsilon}_n s z) \; \mathrm{d} z \; \mathrm{d} s \; \mathrm{d} h \\
 & \leq TV(\zeta_n;\rho,\eta) + c_n
\end{align*}
where
\begin{align*}
\mathcal{Z}_{nhs} & = \left\{ z\in\mathbb{R}^d : h+(1-s)\tilde{\epsilon}_n z \in X \text{ and } h-\tilde{\epsilon}_n s z \in X \right\} \\
c_n & = \int_X \int_0^1 \int_{\mathcal{Z}_{nhs}} \eta(z) \left| \nabla \zeta_n(h) \cdot z\right| \left( \rho(h+(1-s)\tilde{\epsilon}_n z) \rho(h-\tilde{\epsilon}_n s z) - \rho^2(h) \right) \; \mathrm{d} z \; \mathrm{d} s \; \mathrm{d} h.
\end{align*}
If
\begin{align}
& \lim_{n\to \infty} c_n = 0 \label{eq:Gamma:bn} \\
\text{and} \quad & \lim_{n\to \infty} \left| CTV_n(\nu_n) - CTV_n(\zeta_n) \right| = 0 \label{eq:Gamma:VTVmuzeta}
\end{align}
then
\[ \limsup_{n\to \infty} CTV_n(\nu_n) = \limsup_{n\to \infty} CTV_n(\zeta_n) \leq \limsup_{n\to \infty} TV(\zeta_n;\rho,\eta) = TV(\mu;\rho,\eta). \]
We now show~\eqref{eq:Gamma:bn}.
Since the support of $\eta$ is bounded then there exists $M>0$ such that $\mathrm{spt}(\eta)\subset B(0,M)$.
For any $|z|>M$ we have $\eta(z)=0$ and for any $|z|\leq M$ one can show
\begin{align*}
\left| \rho^2(h) - \rho(h+(1-s)\tilde{\epsilon}_n z)\rho(h+\tilde{\epsilon}_n s z)\right| & \leq \|\rho\|_{L^\infty(X)} \mathrm{Lip}(\rho) \left(|(1-s)\tilde{\epsilon}_n z| + |\tilde{\epsilon}_ns z|\right) \\
 & \leq 2\tilde{\epsilon}_n \|\rho\|_{L^\infty(X)} \mathrm{Lip}(\rho) M.
\end{align*}
Then
\begin{align*}
|c_n| & \leq 2\|\rho\|_{L^\infty(X)} \mathrm{Lip}(\rho) \tilde{\epsilon}_n M \int_X \int_{\mathbb{R}^d} \eta(z) \left| \nabla \zeta_n(h) \cdot z \right| \; \mathrm{d} z \; \mathrm{d} h \\
 & \leq \frac{2\|\rho\|_{L^\infty(X)} \mathrm{Lip}(\rho) \tilde{\epsilon}_n M}{\inf_{x\in X} \rho^2(x)} \int_X \int_{\mathbb{R}^d} \eta(z) \left| \nabla \zeta_n(h) \cdot z \right| \rho^2(h) \; \mathrm{d} z \; \mathrm{d} h \\
 & = \frac{2\|\rho\|_{L^\infty(X)} \mathrm{Lip}(\rho) \tilde{\epsilon}_n M}{\inf_{x\in X} \rho^2(x)} TV(\zeta_n;\rho,\eta).
\end{align*}
To complete step 1 we show~\eqref{eq:Gamma:VTVmuzeta}.
This follows by:
\begin{align*}
& \left| CTV_n(\nu_n) - CTV_n(\zeta_n) \right| \\
& \quad \quad \leq \frac{\|\rho\|_{L^\infty(X)}^2}{\tilde{\epsilon}_n^{d+1}} \int_{X^2} \eta\left(\frac{y-x}{\tilde{\epsilon}_n}\right) \left( \left| \nu_n(x)-\zeta_n(x) \right| + \left| \nu_n(y)-\zeta_n(y) \right| \right) \; \mathrm{d} x \; \mathrm{d} y \\
& \quad \quad \leq \frac{2\|\varphi\|_{L^\infty(\mathbb{R}^d)} \mathrm{Vol}(X) \|\rho\|_{L^\infty(X)}^2}{\tilde{\epsilon}_n} \int_X \left| \nu_n(x)-\zeta_n(x) \right| \; \mathrm{d} y \\
& \quad \quad \leq \frac{2\|\varphi\|_{L^\infty(\mathbb{R}^d)} \mathrm{Vol}(X) \|\rho\|_{L^\infty(X)}^2}{\tilde{\epsilon}_n} \left( \|\nu_n - \mu\|_{L^1(X)} + \|\mu-\zeta_n\|_{L^1(X)} \right) \\
& \quad \quad \to 0
\end{align*}
where the last line follows as $\|\mu-\zeta_n\|_{L^1(X)} = o(\tilde{\epsilon}_n)$ and $\|\nu_n - \mu\|_{L^1(X)}=O(\|T_n-\mathrm{Id}\|_{L^\infty(X)}) = o(\epsilon_n)=o(\tilde{\epsilon}_n)$.

\paragraph*{Step 2.}
Let $GTV(\cdot;\rho)$ be the graph total variation defined using $\rho$.
Let $\rho$ be continuous but not necessarily Lipschitz and define $\rho_k:\mathbb{R}^d\to [0,\infty)$ by
\[ \rho_k(x) = \left\{ \begin{array}{ll} \sup_{y\in X} \rho(y) - k|x-y| & \text{if } x\in X \\ 0 & \text{otherwise.} \end{array} \right. \]
Similarly to Lemma~\ref{lem:Gamma:EntoEinftyliminf} step 2 we can check that $\rho_k$ is bounded above and below by positive constants, Lipschitz continuous on $X$ and converges pointwise to $\rho$ from above.
We have
\[ \limsup_{n\to \infty} GTV_n(\mu_n;\rho) \leq \limsup_{n\to \infty} GTV_n(\mu_n;\rho_k) \leq TV(\mu;\rho_k,\eta). \]
By the monotone convergence theorem and Theorem~\ref{thm:Prelim:TV:Measure} we have $\lim_{k\to \infty} TV(\mu;\rho_k,\eta) = TV(\mu;\rho,\eta)$.
\end{proof}

\section{Preliminary Results for the Rate of Convergence \label{sec:Rate}}

There are two main sources of error defined to be $|\mathcal{E}_n(\mu)-\mathcal{E}_\infty(\mu)|$.
The first is due to statistical fluctuations, e.g. data points that lie in the tails that have a large impact when $n$ is small.
The second is due to systematic bias.
Systematic bias is due to several factors such as details of the scaling and the geometry of the problem.
In this section, by taking the expectation over the data, we only consider the second source of error.
In this section we will restrict ourselves to when $\mu=\mathbb{I}_E$ is a polyhedral function (see Definition~\ref{def:Gamma:PolyFun}).

We first fix our notation.
The boundary of a polyhedral set $E$ is contained in the union of finitely many ($N$ say) affine hyperplanes $H_i$ for $i=1,\dots,N$.
We call the set $\partial E_i := \partial E \cap H_i$ a face of $E$.
By construction $\partial E = \cup_{i=1}^N \partial E_i$.
The intersection of two faces is an edge $e_{ij} = \partial E_i \cap \partial E_j$.
It is unfortunate but unavoidable that we use the term edge in two different contexts; either as a edge in a graph, or as a edge of a polyhedral set.
It should be clear from the context what is meant.
The intersection of edges is called a corner.


To reduce bias let us redefine the normalization on $GTV_n$ so that
\begin{equation} \label{eq:Rate:GTVn}
GTV_n(\mu) = \frac{1}{\epsilon_n} \frac{1}{n(n-1)} \sum_{i,j} \eta_{\epsilon_n}(\xi_i - \xi_j) \left| \mu(\xi_i) - \mu(\xi_j) \right|.
\end{equation}
For $\Xi_{ij} = \frac{1}{\eps_n} \eta_{\epsilon_n}(\xi_i - \xi_j) \left| \mu(\xi_i) - \mu(\xi_j) \right|$ one can write
\[ GTV_n(\mu) = \frac{1}{n(n-1)} \sum_{i,j} \Xi_{ij}. \]
Since $\xi_i$ are iid then
\[ \mathbb{E} \Xi_{ij} = \left\{ \begin{array}{ll} \mathbb{E} \Xi_{12} & \text{if } i\neq j \\ 0 & \text{if } i=j. \end{array} \right. \]
Hence $\mathbb{E}GTV_n(\mu) = \mathbb{E} \Xi_{12}$.
This would not be true for the normalization we considered previously.

For simplicity we make the following assumptions.
Assume $X = (0,1)^d$ where $d\geq 1$ and that $\rho\equiv 1$ on $X$.
We use an isotropic interaction potential $\eta = \mathbb{I}_{B(0,1)}$.
These assumptions simplify the calculations which allows one to have a better understanding of the methodology without the notational burden if one used more general assumptions.
We expect that the results in this section can be generalised to a wider class of interaction potentials $\eta$, spaces $X\subset \mathbb{R}^d$ and probability densities $\rho$.
We start with the convergence of the expectation.

\begin{theorem}
\label{thm:Rate:ConvMean}
Let $X=(0,1)^d$ with $d\geq 2$, $\rho\equiv 1$ and $\epsilon_n$ be any sequence converging to zero.
The data is distributed $\xi_i\iid \rho$ and let $\Psi_n=\{\xi_i\}_{i=1}^n$.
Define $GTV_n:L^1(\Psi_n)\to [0,\infty]$ by~\eqref{eq:Rate:GTVn} where the weights are given by $W_{ij}=\eta_{\epsilon_n}(\xi_i-\xi_j)$ and $\eta_{\epsilon_n}(x)=\frac{1}{\epsilon_n^d}\mathbb{I}_{|x|\leq \epsilon_n}$.
Define $TV(\cdot;\rho,\eta):L^1(X)\to [0,\infty]$ by~\textnormal{(\ref{eq:Prelim:TV:TV}-\ref{eq:Prelim:TV:sigma})}.
Let $\mu=\mathbb{I}_E$ be a polyhedral function.
Then
\[ \left| \mathbb{E} GTV_n(\mu) - TV(\mu;\rho,\eta) \right| = O(\epsilon_n). \]
\end{theorem}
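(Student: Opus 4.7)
The plan is to expand $\mathbb{E} GTV_n(\mu)$ as a deterministic double integral and match it, up to $O(\epsilon_n)$, with the explicit formula
\[
TV(\mu;\rho,\eta)=\sum_i \sigma(\nu_i)\,\mathcal{H}^{d-1}(\partial E_i\cap X)
\]
that one obtains from $\rho\equiv 1$ and the polyhedral structure. Since the $\xi_i$ are iid uniform, $\mathbb{E} GTV_n(\mu)=\mathbb{E}\Xi_{12}$, and the change of variables $y=x+\epsilon_n z$ yields
\[
\mathbb{E} GTV_n(\mu)=\frac{1}{\epsilon_n}\int_X f(x)\,\mathrm{d}x,\qquad f(x):=\int_{B(0,1)}\mathbb{I}_{x+\epsilon_n z\in X}\,|\mathbb{I}_E(x)-\mathbb{I}_E(x+\epsilon_n z)|\,\mathrm{d}z.
\]
Since $f(x)=0$ whenever $\mathrm{dist}(x,\partial E)>\epsilon_n$, only a tubular $\epsilon_n$-neighborhood of $\partial E$ contributes.

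Next I would decompose $\partial E=\bigcup_i \partial E_i$ into its finitely many $(d-1)$-dimensional faces with outward normals $\nu_i$, and split the tube into a ``face'' part and a lower-dimensional ``skeleton'' part. Define the face region $G_i$ to consist of those $x$ within $\epsilon_n$ of $\partial E_i$ but at distance at least $2\epsilon_n$ from every other face, every edge $e_{ij}$, and from $\partial X$. On $G_i$ the ball $x+\epsilon_n B(0,1)$ lies inside $X$ and $E$ locally coincides with the half-space $\{y:(y-x_0)\cdot\nu_i\leq 0\}$, where $x_0$ is the foot of $x$ on $H_i$. Writing $x=x_0+t\nu_i$ and $\tau=t/\epsilon_n\in(-1,1)$, the condition $|\mathbb{I}_E(x)-\mathbb{I}_E(x+\epsilon_n z)|=1$ reduces to $\tau(\tau+z\cdot\nu_i)\leq 0$, so Fubini together with the symmetry $z\mapsto -z$ gives
\[
\int_{-\epsilon_n}^{\epsilon_n} f(x_0+t\nu_i)\,\mathrm{d}t = 2\epsilon_n\int_{B(0,1)}(z\cdot\nu_i)_+\,\mathrm{d}z=\epsilon_n\,\sigma(\nu_i).
\]
Integrating tangentially across $\partial E_i$ yields $\int_{G_i} f\,\mathrm{d}x = \epsilon_n\,\sigma(\nu_i)\,\mathcal{H}^{d-1}(\partial E_i\cap X)+O(\epsilon_n^2)$, the error accounting for the tangential strips of width $O(\epsilon_n)$ near edges of $\partial E_i$ and near $\partial X$ that are excluded from $G_i$.

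Finally, the complement $X\setminus\bigcup_i G_i$ inside the tubular neighborhood of $\partial E$ is contained in the $O(\epsilon_n)$-neighborhood of the union of the edges $e_{ij}$, the higher codimension strata (corners), and $\partial E\cap\partial X$. By the transversality clause in Definition~\ref{def:Gamma:PolyFun}, each of these sets has Hausdorff dimension at most $d-2$, so their $\epsilon_n$-neighborhood has Lebesgue measure $O(\epsilon_n^2)$. Since $f$ is uniformly bounded by $\mathrm{Vol}(B(0,1))$, its integral over this complement is $O(\epsilon_n^2)$. Summing the face contributions and dividing by $\epsilon_n$ produces
\[
\mathbb{E} GTV_n(\mu)=\sum_i \sigma(\nu_i)\,\mathcal{H}^{d-1}(\partial E_i\cap X)+O(\epsilon_n)=TV(\mu;\rho,\eta)+O(\epsilon_n).
\]
The main obstacle is the bookkeeping of the degenerate neighborhoods: one must verify simultaneously that the face calculation is exact once $D(x)\subset X$, that the strips of $\partial E_i\cap X$ omitted from $G_i$ are themselves contained in $O(\epsilon_n)$-neighborhoods of the $(d-2)$-dimensional skeleton, and that the hypothesis $d\geq 2$ is what makes this whole skeleton sufficiently low-dimensional to absorb into the $O(\epsilon_n^2)$ volume bound.
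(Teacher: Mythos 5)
Your proposal is correct and follows essentially the same route as the paper's proof: expand $\mathbb{E}\,GTV_n(\mu)$ as a deterministic double integral supported on the $\epsilon_n$-tube around $\partial E$, split the tube into face regions (where the half-space computation gives exactly $\sigma(\nu_i)$ per unit $\mathcal{H}^{d-1}$-area) and an $O(\epsilon_n^2)$-volume neighborhood of the $(d-2)$-dimensional skeleton whose contribution is $O(\epsilon_n)$ after dividing by $\epsilon_n$. The only cosmetic difference is that you merge the $E$-side and $E^c$-side contributions via the symmetry $z\mapsto -z$, whereas the paper computes them separately.
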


Note that we do not need a lower bound on the decay of $\epsilon_n$.
By taking the expectation we are immediately in the continuous setting and therefore lose all the graphical structure.
In particular
\[ \mathbb{E} GTV_n(\mu) = \frac{1}{\epsilon_n} \int_{(0,1)^d} \int_{(0,1)^d} \eta_{\epsilon_n}(x-y) \left| \mu(x) - \mu(y) \right| \; \mathrm{d} y \; \mathrm{d} x \]
has no discrete structure.

Our proof shows that along faces of $E$ and sufficiently far from edges in some sense the expected graph total variation is equal to the total variation.
To be more precise if $E=\{ x \in X \, : \, w\cdot x >0 \}$ for some $w\in \mathbb{R}^d$ then $E$ has no edges and therefore $GTV_n(\mu) = TV(\mu;\rho,\eta)$.
The discrepancy between $\mathbb{E} GTV_n(\mu)$ and $TV(\mu;\rho,\eta)$, which we show is of order $\eps_n$, is a consequence of having to approximate along edges of $E$.

\begin{proof}[Proof of Theorem~\ref{thm:Rate:ConvMean}]
Let $\partial E = \cup_{i=1}^N \partial E_i$.
We first calculate $TV(\mu;\rho,\eta)$,
\[ TV(\mu;\rho,\eta) = \int_{\partial\{\mu=1\}} \sigma(n(x)) \; \mathrm{d} \mathcal{H}^{d-1}(x) = \sum_{i=1}^N \left|\partial E_i\right|_{\mathcal{H}^{d-1}} \sigma(n_i) \]
where $n_i$ is the outward unit normal for side $\partial E_i$ and we use $|\cdot|_{\mathcal{H}^{d-1}}$ to denote the $\mathcal{H}^{d-1}$ measure.
Observe
\[ \sigma = \sigma(n_i) = \int_{B(0,1)} |x\cdot n_i| \; \mathrm{d} x = \int_{B(0,1)} |x_d| \; \mathrm{d} x. \]
So $TV(\mu;\rho,\eta) = \sigma |\partial E|_{\mathcal{H}^{d-1}}$.

Consider the face $\partial E_i$, we will approximate this with a smaller face $\partial E_i^{(n)} \subset \partial E_i$ that will approximate the graph total variation to within $O(\eps_n)$.
Consider the set,
\[ R_i(s) = \Big\{ x + \eps_n t n \, : \, t \in [-1,1], n \text{ is normal to } \partial E_i \text{ and } x \in \partial E_i \text{ with } \mathrm{dist}(\partial(\partial E_i),x) > s \Big\} \]
where $\partial(\partial E_i)$ is the $d-2$ dimensional boundary of the face $\partial E_i$.
There exists $s_n=O(\eps_n)$ such that for all $i$
\[ R_i(s_n) \cap \left( \bigcup_{j\neq i} \partial E_j \right) = \emptyset. \]
Define
\begin{align*}
B_n & = \Big\{ x \, : \, \mathrm{dist}(x,\partial E) < \eps_n \Big\} \\
S_i^{(n)} & = R_i(s_n+\eps_n) \quad \quad \quad \quad T_i^{(n)} = R_i(s_n) \\
U_i^{(n)} & = \Big\{ x+ \eps_n t n \, : \, t \in [-1,1], n \text{ is normal to } \partial E_i \text{ and } x \in \partial E_i \text{ with } \mathrm{dist}(\partial(\partial E_i),x) \leq s_n + \eps_n \Big\} \\
\partial E_i^{(n)} & = \Big\{ x \in \partial E_i \, : \, \mathrm{dist}(x,\partial(\partial E_i)) > s_n +\eps_n \Big\}.
\end{align*}
By construction $\bigcup_{x\in S_i^{(n)}\cap E} \left[ E^c \cap B(x,\eps_n) \right] \subseteq \left( T_i^{(n)}\cap E^c \right)$ and $\bigcup_{x\in S_i^{(n)}\cap E^c} \left[ E \cap B(x,\eps_n) \right] \subseteq \left( T_i^{(n)}\cap E \right)$.
Now,
\begin{align*}
\mathbb{E} GTV_n(\mu) & = \frac{1}{\eps_n} \int_{B_n} \int_{B_n} \eta_{\eps_n}(x-y) \left|\mu(x) - \mu(y) \right| \, \mathrm{d} y \, \mathrm{d} x \\
 & = \frac{1}{\eps_n^{d+1}} \sum_{i=1}^N \int_{S_i^{(n)}\cap E} \int_{T_i^{(n)}\cap E^c} \mathbb{I}_{|x-y|\leq \eps_n} \, \mathrm{d} y \, \mathrm{d} x  + \frac{1}{\eps_n^{d+1}} \int_{(B_n\cap E)\setminus \bigcup_{i=1}^N S_i^{(n)}} \int_{E^c} \mathbb{I}_{|x-y|\leq \eps_n} \, \mathrm{d} y \, \mathrm{d} x \\
 & \quad \quad + \frac{1}{\eps_n^{d+1}} \sum_{i=1}^N \int_{S_i^{(n)}\cap E^c} \int_{T_i^{(n)}\cap E} \mathbb{I}_{|x-y|\leq \eps_n} \, \mathrm{d} y \, \mathrm{d} x + \frac{1}{\eps_n^{d+1}} \int_{(B_n\cap E^c)\setminus \bigcup_{i=1}^N S_i^{(n)}} \int_E \mathbb{I}_{|x-y|\leq \eps_n} \, \mathrm{d} y \, \mathrm{d} x.
\end{align*}
We have
\begin{align*}
\frac{1}{\eps_n^{d+1}} \int_{(B_n\cap E)\setminus \bigcup_{i=1}^N S_i^{(n)}} \int_{E^c} \mathbb{I}_{|x-y|\leq \eps_n} \, \mathrm{d} y \, \mathrm{d} x & \leq \frac{1}{\eps_n} \sum_{i=1}^N \int_{U_i^{(n)}} \int_{x + \eps_n z \in E^c} \mathbb{I}_{|z|\leq 1} \, \mathrm{d} z \, \mathrm{d} x \\
 & \leq \frac{1}{\eps_n} \sum_{i=1}^N \mathrm{Vol}(U_i^{(n)}) \mathrm{Vol}(B(0,1)) \\
 & = O(\eps_n)
\end{align*}
since $\mathrm{Vol}(U_i^{(n)})=O(\eps_n (s_n+\eps_n)) = O(\eps_n^2)$.
Similarly
\[ \frac{1}{\eps_n^{d+1}} \int_{(B_n\cap E^c)\setminus \bigcup_{i=1}^N S_i^{(n)}} \int_E \mathbb{I}_{|x-y|\leq \eps_n} \, \mathrm{d} y \, \mathrm{d} x = O(\eps_n). \]

For the remaining terms in the above expansion of $\mathbb{E}GTV_n(\mu)$ we consider the $i^\text{th}$ face.
After a suitable change of coordinates we can assume that $\partial E_i \subset \{ x \, : \, x_1 = 0 \}$ and $n_i = (1,0,\dots,0)$.
Then
\begin{align*}
\frac{1}{\eps_n} \int_{S_i^{(n)}\cap E} \int_{T_i^{(n)}\cap E^c} \mathbb{I}_{|x-y|\leq \eps_n} \, \mathrm{d} y \, \mathrm{d} x & = \frac{1}{\eps_n} \int_{\partial E_i^{(n)}} \int_{-\eps_n}^0 \int_{y_1>0} \mathbb{I}_{|x-y|\leq \eps_n} \, \mathrm{d} y \, \mathrm{d} x_1 \, \mathrm{d} x_{2:d} \\
 & = \frac{1}{\eps_n} \int_{\partial E_i^{(n)}} \int_{-\eps_n}^0 \int_{x_1=\eps_n z_1 >0} \mathbb{I}_{|z|\leq 1} \, \mathrm{d} z \, \mathrm{d} x_1  \, \mathrm{d} x_{2:d} \\
 & = \frac{1}{\eps_n} \int_{\partial E_i^{(n)}} \int_{-\eps_n}^0 \int_{|z|\leq 1} \mathbb{I}_{x_1 +\eps_n z_1>0} \, \mathrm{d} z \, \mathrm{d} x_1 \, \mathrm{d} x_{2:d} \\
 & = \int_{\partial E_i^{(n)}} \int_{|z|\leq 1} z_1 \mathbb{I}_{z_1>0} \, \mathrm{d} z \, \mathrm{d} x_{2:d-1} \\
 & = \frac{1}{2} |\partial E_i^{(n)}|_{\mathcal{H}^{d-1}} \int_{|z|\leq 1} |z_1| \, \mathrm{d} z \\
 & = \frac{\sigma}{2} |\partial E_i^{(n)}|_{\mathcal{H}^{d-1}} \\
 & = \frac{\sigma}{2} |\partial E_i|_{\mathcal{H}^{d-1}} + O(\eps_n).
\end{align*}
Analogously,
\[ \frac{1}{\eps_n^{d+1}} \sum_{i=1}^N \int_{S_i^{(n)}\cap E^c} \int_{T_i^{(n)}\cap E} \mathbb{I}_{|x-y|\leq \eps_n} \, \mathrm{d} y \, \mathrm{d} x = \frac{\sigma}{2} |\partial E_i|_{\mathcal{H}^{d-1}} + O(\eps_n). \]
%
%
Collecting terms, we have shown
\[ \mathbb{E} GTV_n(\mu) = TV(\mu;\rho,\eta) + O(\epsilon_n) \]
which completes the proof.
\end{proof}

The above theorem established convergence in mean of $GTV_n(\mu^n)$ to $TV(\mu;\rho,\eta)$. 
A natural next step is to establish convergence in mean square.
The next theorem gives the asymptotic expansion of $\mathbb{E}\left| GTV_n(\mu) - TV(\mu;\rho,\eta) \right|^2$.
We order the expansion so that dominant terms come first (the ordering follows from the scaling given by Assumption~3 in Definition~\ref{def:MainRes:Admissible}).
The dominant term depends on the bias $\alpha_n = \mathbb{E} GTV_n(\mu) - TV(\mu;\rho,\eta)$ which is $O(\eps_n)$ due to the approximation along the edges of each face of $E$.
The complexity of refining this approximation, i.e. finding the constant $c$ such that $\alpha_n = c\eps_n + \text{h.o.t.}$ goes beyond the scope of the paper.

\begin{theorem}
\label{thm:Rate:ConvMeanSq}
Under the same conditions as Theorem~\ref{thm:Rate:ConvMean}
\begin{align*}
\mathbb{E} \left| GTV_n(\mu) - TV(\mu;\rho,\eta) \right|^2 & = - 2\alpha_n TV(\mu;\rho,\eta) + \frac{4(n-2)|\partial E|_{\mathcal{H}^{d-1}} V}{n(n-1) \eps_n} + \frac{2 TV(\mu;\rho,\eta)}{n(n-1)\eps_n^{d+1}} \\
 & \quad \quad + \frac{(n-2)(n-3) \alpha_n^2}{n(n-1)} + \frac{(6-4n)TV(\mu;\rho,\eta)^2}{n(n-1)} + O\left(\frac{1}{n}\right) \\
 & \quad \quad + O\left(\frac{1}{\eps_n^dn^2}\right) + \frac{2\alpha_n TV(\mu;\rho,\eta)}{n(n-1)}
\end{align*}
where $\alpha_n = \mathbb{E} GTV_n(\mu) - TV(\mu;\rho,\eta) = O(\epsilon_n)$ is the bias and
\[ V = \frac{1}{2} \int_{B(0,1)} \int_{B(0,1)} \min\{|z_d|,|y_d|\} \; \mathrm{d} z \; \mathrm{d} y. \]
\end{theorem}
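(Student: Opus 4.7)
The plan is to view $GTV_n(\mu)$ as a $U$-statistic and reduce the second moment to a small number of integrals that can be evaluated directly. Write $\tau := TV(\mu;\rho,\eta)$ and $\Xi_{ij} := \frac{1}{\epsilon_n}\eta_{\epsilon_n}(\xi_i-\xi_j)|\mu(\xi_i)-\mu(\xi_j)|$, so that
\[
GTV_n(\mu) = \frac{1}{n(n-1)}\sum_{i\neq j}\Xi_{ij},\qquad \mathbb{E}\Xi_{12}=\tau+\alpha_n.
\]
The starting identity is $\mathbb{E}|GTV_n(\mu)-\tau|^2 = \mathbb{E}[GTV_n(\mu)^2] - 2\tau(\tau+\alpha_n)+\tau^2$, so everything reduces to computing $\mathbb{E}[GTV_n(\mu)^2]$.

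First I would expand the double sum $\sum_{i\neq j}\sum_{k\neq \ell}\Xi_{ij}\Xi_{k\ell}$ and partition the ordered quadruples $(i,j,k,\ell)$ by $|\{i,j\}\cap\{k,\ell\}|\in\{0,1,2\}$. Using $\Xi_{ij}=\Xi_{ji}$ together with i.i.d.\ sampling, the three classes contribute $2n(n-1)\mathbb{E}\Xi_{12}^2$, $4n(n-1)(n-2)\mathbb{E}[\Xi_{12}\Xi_{13}]$ and $n(n-1)(n-2)(n-3)(\mathbb{E}\Xi_{12})^2$, respectively. Dividing by $n^2(n-1)^2$ gives the three prefactors appearing in the statement, and the only remaining work is to evaluate the three moments.

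The moment $\mathbb{E}\Xi_{12}^2$ is immediate from the assumption $\eta=\mathbb{I}_{B(0,1)}$, which implies $\eta_{\epsilon_n}^2 = \epsilon_n^{-d}\eta_{\epsilon_n}$, together with $|\mu(x)-\mu(y)|^2=|\mu(x)-\mu(y)|$ since $\mu\in\{0,1\}$; this yields $\mathbb{E}\Xi_{12}^2 = \epsilon_n^{-(d+1)}(\tau+\alpha_n)$ which, after multiplying by $2/[n(n-1)]$, produces the explicit $\frac{2\tau}{n(n-1)\epsilon_n^{d+1}}$ term plus an $O(1/(n^2\epsilon_n^d))$ remainder (using $\alpha_n=O(\epsilon_n)$). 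The moment $\mathbb{E}\Xi_{12}=\tau+\alpha_n$ is Theorem~\ref{thm:Rate:ConvMean}, and so $(\mathbb{E}\Xi_{12})^2$ expands via $(\tau+\alpha_n)^2 = \tau^2+2\tau\alpha_n+\alpha_n^2$; using $\frac{(n-2)(n-3)}{n(n-1)} = 1 + \frac{6-4n}{n(n-1)}$ produces the $\alpha_n^2$, $\tau^2$ and $\tau\alpha_n$ terms written in the statement, after the cancellation with $-2\tau(\tau+\alpha_n)+\tau^2$.

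The main obstacle, and the step that introduces the constant $V$, is the computation of $\mathbb{E}[\Xi_{12}\Xi_{13}]$. I would condition on $\xi_1=x$, recognise that
\[
\mathbb{E}[\Xi_{12}\Xi_{13}] = \frac{1}{\epsilon_n^2}\int_X F_n(x)^2\,\mathrm{d}x, \qquad F_n(x) = \int_{B(0,1)} |\mu(x)-\mu(x+\epsilon_n z)|\,\mathrm{d}z,
\]
after the change of variables $y=x+\epsilon_n z$. Since $F_n$ vanishes outside an $\epsilon_n$-neighbourhood of $\partial E$, I would localise the integral near each face $\partial E_i$, choose coordinates in which the face is $\{y^{(1)}=0\}$, and write $F_n(x)=h(x^{(1)}/\epsilon_n)$ with $h(s)=\mathrm{vol}(B(0,1)\cap\{z^{(1)}<-|s|\})$; a one-dimensional substitution then gives the face contribution $|\partial E_i|_{\mathcal{H}^{d-1}}\,\epsilon_n\int_{-1}^{1}h(s)^2\,\mathrm{d}s+O(\epsilon_n^2)$, where the $O(\epsilon_n^2)$ comes from shrinking $\partial E_i$ to its subset $\partial E_i^{(n)}$ (bounded away from its boundary by $s_n=O(\epsilon_n)$, exactly as in the proof of Theorem~\ref{thm:Rate:ConvMean}). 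The regions within distance $O(\epsilon_n)$ of an edge $e_{ij}$ produce a total contribution of size $O(\text{edge measure}\cdot\epsilon_n^2/\epsilon_n^2)=O(1)$, hence of order $1/n$ after multiplication by $\frac{4(n-2)}{n(n-1)}$, which is the source of the $O(1/n)$ error term. The identification $\int_{-1}^1 h(s)^2\,\mathrm{d}s = V$ is the final computation: by Fubini and symmetry $z\mapsto-z$,
\[
\int_0^1 h(s)^2\,\mathrm{d}s = \int_{B(0,1)^2} \min\{(-z^{(1)})_+,(-y^{(1)})_+\}\,\mathrm{d}z\,\mathrm{d}y = \tfrac{1}{4}\!\int_{B(0,1)^2}\!\min\{|z^{(1)}|,|y^{(1)}|\}\,\mathrm{d}z\,\mathrm{d}y = \tfrac{V}{2}.
\]
Assembling the three moments with the combinatorial prefactors, subtracting $2\tau(\tau+\alpha_n)-\tau^2$, and collecting the lower-order pieces into the $O(1/n)$ and $O(1/(n^2\epsilon_n^d))$ symbols reproduces the expansion claimed in the theorem.
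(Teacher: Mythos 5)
Your proposal is correct and follows essentially the same route as the paper: the identical decomposition of $\mathbb{E}[GTV_n(\mu)^2]$ into the three index classes with counts $2n(n-1)$, $4n(n-1)(n-2)$ and $n(n-1)(n-2)(n-3)$, the same evaluation $\mathbb{E}\Xi_{12}^2=\epsilon_n^{-(d+1)}\mathbb{E}GTV_n(\mu)$ via $\eta_{\epsilon_n}^2=\epsilon_n^{-d}\eta_{\epsilon_n}$, and the same face-localization giving $\mathbb{E}[\Xi_{12}\Xi_{13}]=|\partial E|_{\mathcal{H}^{d-1}}V/\epsilon_n+O(1)$ (your conditioning on $\xi_1$ and the profile $h$ is merely a cleaner rendering of the paper's triple integral over $S_i^{(n)}\times T_i^{(n)}\times T_i^{(n)}$). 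One caveat: the honest assembly of these moments yields a $\tau\alpha_n$ contribution of $\frac{2(6-4n)\alpha_n\tau}{n(n-1)}$ rather than the $-2\alpha_n\tau+\frac{2\alpha_n\tau}{n(n-1)}$ appearing in the theorem, so your closing claim that the collection ``reproduces the expansion claimed'' papers over a discrepancy that is present in the stated result itself (the paper's own proof also stops at ``collecting terms'') rather than in your argument.
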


\begin{proof}
We can write
\begin{align*}
\mathbb{E} \left| GTV_n(\mu) - TV(\mu;\rho,\eta) \right|^2 & = \mathbb{E} GTV_n(\mu)^2 + TV(\mu;\rho,\eta)^2 - 2TV(\mu;\rho,\eta) \mathbb{E} GTV_n(\mu) \\
 & = \mathbb{E} GTV_n(\mu)^2 - TV(\mu;\rho,\eta)^2 - 2TV(\mu;\rho,\eta) \alpha_n.
\end{align*}
Let $\Xi_{ij} = \frac{1}{\epsilon_n} \eta_{\epsilon_n}(\xi_i-\xi_j) \left| \mu(\xi_i)-\mu(\xi_j)\right|$ then
\[ GTV_n(\mu) = \frac{1}{n(n-1)} \sum_{i,j} \Xi_{ij} \quad \text{and} \quad GTV_n(\mu)^2 = \frac{1}{n^2(n-1)^2} \sum_{i,j,k,l} \Xi_{ij}\Xi_{kl}. \]
Let $i,j,k,l$ be distinct, then $GTV_n(\mu)^2$ has the following contributions:
\begin{enumerate}
\item[A.] $2n(n-1)$ terms consisting of $\Xi_{ij}^2$,
\item[B.] $4n(n-1)(n-2)$ terms consisting of $\Xi_{ij}\Xi_{ik}$ and
\item[C.] $n(n-1)(n-2)(n-3)$ terms consisting of $\Xi_{ij}\Xi_{kl}$.
\end{enumerate}

For~C we use independence of $\Xi_{ij}$ with $\Xi_{kl}$ to write
\[ \mathbb{E} \Xi_{ij}\Xi_{kl} = \mathbb{E}\Xi_{ij} \mathbb{E}\Xi_{kl} = \left(\mathbb{E} GTV_n(\mu) \right)^2 = \left(TV(\mu;\rho,\eta) + \alpha_n \right)^2 = TV(\mu;\rho,\eta)^2 + 2\alpha_n TV(\mu;\rho,\eta) + \alpha_n^2. \]
For~A:
\begin{align*}
\mathbb{E} \Xi_{ij}^2 & = \frac{1}{\epsilon_n^{2d+2}} \int_{(0,1)^d} \int_{(0,1)^d} \mathbb{I}_{|x-y|\leq \epsilon_n} \left| \mu(x) - \mu(y) \right| \; \mathrm{d} x \; \mathrm{d} y \\
 & = \frac{1}{\epsilon_n^{d+1}} \mathbb{E} GTV_n(\mu) \\
 & = \frac{1}{\epsilon_n^{d+1}} TV(\mu;\rho,\eta) + O\left(\frac{1}{\epsilon_n^d}\right) \quad \text{by Theorem~\ref{thm:Rate:ConvMean}.}
\end{align*}
Now we consider~B.
We have
\begin{align*}
\mathbb{E} \Xi_{ij} \Xi_{ik} & = \frac{1}{\eps_n^{2d+2}} \int_{(0,1)^d} \int_{(0,1)^d} \int_{(0,1)^d} \mathbb{I}_{|x-y|\leq \eps_n} \mathbb{I}_{|x-w|\leq \eps_n} |\mu(x) - \mu(y)| |\mu(x) - \mu(w)| \, \mathrm{d} w \, \mathrm{d} y \, \mathrm{d} x \\
 & = \frac{1}{\eps_n^{2d+2}} \sum_{i=1}^N \int_{S_i^{(n)}\cap E} \int_{T_i^{(n)}\cap E^c} \int_{T_i^{(n)}\cap E^c} \mathbb{I}_{|x-y|\leq \eps_n} \mathbb{I}_{|x-w|\leq \eps_n} \, \mathrm{d} w \, \mathrm{d} y \, \mathrm{d} x \\
 & \quad \quad + \frac{1}{\eps_n^{2d+2}} \int_{(B_n\cap E) \setminus \cup_{i=1}^N S_i^{(n)}} \int_{E^c} \int_{E^c} \mathbb{I}_{|x-y|\leq \eps_n} \mathbb{I}_{|x-w|\leq \eps_n} \, \mathrm{d} w \, \mathrm{d} y \, \mathrm{d} x \\
 & \quad \quad + \frac{1}{\eps_n^{2d+2}} \sum_{i=1}^N \int_{S_i^{(n)}\cap E^c} \int_{T_i^{(n)}\cap E} \int_{T_i^{(n)}\cap E} \mathbb{I}_{|x-y|\leq \eps_n} \mathbb{I}_{|x-w|\leq \eps_n} \, \mathrm{d} w \, \mathrm{d} y \, \mathrm{d} x \\
 & \quad \quad + \frac{1}{\eps_n^{2d+2}} \int_{(B_n\cap E^c) \setminus \cup_{i=1}^N S_i^{(n)}} \int_E \int_E \mathbb{I}_{|x-y|\leq \eps_n} \mathbb{I}_{|x-w|\leq \eps_n} \, \mathrm{d} w \, \mathrm{d} y \, \mathrm{d} x.
\end{align*}
Now,
\begin{align*}
& \frac{1}{\eps_n^{2d+2}} \int_{(B_n\cap E) \setminus \cup_{i=1}^N S_i^{(n)}} \int_{E^c} \int_{E^c} \mathbb{I}_{|x-y|\leq \eps_n} \mathbb{I}_{|x-w|\leq \eps_n} \, \mathrm{d} w \, \mathrm{d} y \, \mathrm{d} x \\
& \quad \quad \quad \quad = \frac{1}{\eps_n^2} \int_{(B_n\cap E)\setminus \cup_{i=1}^N S_i^{(n)}} \int_{x+\eps_n y \in E^c} \int_{x+\eps_n w \in E^c} \mathbb{I}_{|z|\leq 1} \mathbb{I}_{|w|\leq 1} \, \mathrm{d} w \, \mathrm{d} y \, \mathrm{d} x \\
& \quad \quad \quad \quad \leq \frac{1}{\eps_n^2} \left(\mathrm{Vol}(B(0,1)) \right)^2 \mathrm{Vol}((B_n\cap E)\setminus \cup_{i=1}^N S_i^{(n)}) \\
& \quad \quad \quad \quad = O(1).
\end{align*}
where $B_n, S_i^{(n)}$ and $T_i^{(n)}$ are as in the proof of Theorem~\ref{thm:Rate:ConvMean}.
Considering each face individually, after rotating,
\begin{align*}
& \frac{1}{\eps_n^{2d+2}} \int_{S_i^{(n)}\cap E} \int_{T_i^{(n)}\cap E^c} \int_{T_i^{(n)}\cap E^c} \mathbb{I}_{|x-y|\leq \eps_n} \mathbb{I}_{|x-w|\leq \eps_n} \, \mathrm{d} w \, \mathrm{d} y \, \mathrm{d} x \\
& \quad \quad \quad \quad = \frac{1}{\eps_n^2} \int_{S_i^{(n)}\cap E} \int_{x_d+\eps_n y_d>0} \int_{x_d+\eps_n w >0} \mathbb{I}_{|y|\leq 1} \mathbb{I}_{|w|\leq 1} \, \mathrm{d} w \, \mathrm{d} y \, \mathrm{d} x \\
& \quad \quad \quad \quad = \frac{1}{\eps_n^2} \int_{\partial E_i^{(n)}} \int_{-\eps_n}^0 \int_{|y|\leq 1} \int_{|w|\leq 1} \mathbb{I}_{x_d + \eps_n y_d > 0} \mathbb{I}_{x_d + \eps_n w_d >0} \, \mathrm{d} y \, \mathrm{d} x_d \, \mathrm{d} x_{1:d-1} \\
& \quad \quad \quad \quad = \frac{1}{\eps_n^2} \int_{\partial E_i^{(n)}} \int_{-\eps_n}^0 \int_{|w|\leq 1} \int_{|y|\leq 1} \mathbb{I}_{x_d> -\eps_n \min\{w_d,y_d\}} \, \mathrm{d} y \, \mathrm{d} w \, \mathrm{d} x_d \, \mathrm{d} x_{1:d-1} \\
& \quad \quad \quad \quad = \frac{1}{\eps_n} \int_{\partial E_i^{(n)}} \int_{|w|\leq 1} \int_{|y|\leq 1} \mathbb{I}_{\min\{ w_d,y_d\} >0} \min\{ w_d,y_d\} \, \mathrm{d} y \, \mathrm{d} w \, \mathrm{d} x_{1:d-1} \\
& \quad \quad \quad \quad = \frac{1}{4\eps_n} \int_{\partial E_i^{(n)}} \int_{|w|\leq 1} \int_{|y|\leq 1} \min\{ |w_d|, |y_d|\} \, \mathrm{d} y \, \mathrm{d} w \, \mathrm{d} x_{1:d-1} \\
& \quad \quad \quad \quad = \frac{|\partial E_i^{(n)}|_{\mathcal{H}^{d-1}} V}{2\eps_n} \\
& \quad \quad \quad \quad = \frac{|\partial E_i|_{\mathcal{H}^{d-1}} V}{2\eps_n} + O(1).
\end{align*}
Hence
\[ \mathbb{E} \Xi_{ij} \Xi_{ik} = \frac{|\partial E|_{\mathcal{H}^{d-1}} V}{\eps_n} + O(1). \]

Collecting terms implies the result of the theorem.
%
\end{proof}

If one defines
\begin{align*}
\kappa_1 & = 4|\partial E|_{\mathcal{H}^{d-1}} V \\
\kappa_2 & = 2TV(\mu;\rho,\eta)
\end{align*}
then we can conclude the asymptotic expansion given in Section~\ref{subsec:MainRes:Rate}.

\section*{Acknowledgements}

Part of this work was completed whilst MT was part of MASDOC at the University of Warwick and was supported by an EPSRC Industrial CASE Award PhD Studentship with Selex ES Ltd.
The authors would also like to thank Neil Cade (Selex ES Ltd.) and Adam Johansen (Warwick University) whose discussions enhanced this paper.

\appendix
\appendixpage

\section{Proof of Proposition~\ref{prop:MainRes:SatAss} \label{app:ProofPropMainResSatAss}}

\begin{proof}[Proof of Proposition~\ref{prop:MainRes:SatAss}]
For the first part assume the support of $\varphi$ is contained in $[0,M]$ and choose $N \leq M$ such that $\varphi(t) \geq \frac{\varphi(0)}{2}>0$ for all $0\leq t \leq N$ and let $0<\delta<N$.
First consider $|x|\geq N-\delta$.
If $|x-z|<\delta$ then
\[ |z| \leq |z-x| + |x| \leq \delta + |x| = |x| \left(1 + \frac{\delta}{|x|} \right) \leq |x| \left( 1+ \frac{\delta}{N-\delta} \right). \]
Set $c_\delta = 1$ and $\alpha_\delta = 1+\frac{\delta}{N-\delta}$, then as $\varphi$ is decreasing we have
\[ \eta(z) = \varphi(|z|) \geq \varphi(\alpha_\delta|x|) = c_\delta \eta(\alpha_\delta x). \]

Now if $|x|\leq N-\delta$ then $|z|\leq N \leq M$ and therefore
\[ \eta(z) = \phi(|z|) \geq \varphi(|x|) - L\left| |x| - |z| \right| \geq \varphi(|x|) - L|x-z| \geq \varphi(|x|) - L\delta = \varphi(|x|) \left( 1 - \frac{L\delta}{\varphi(|x|)} \right) \]
where $L$ is the Lipschitz constant of $\varphi$.
By definition of $N$ we have that $\varphi(|x|) \geq \frac{\varphi(0)}{2}$ and therefore
\[ \eta(z) \geq \varphi(|x|) \left( 1 - \frac{2L\delta}{\varphi(0)} \right). \]
Hence $\eta(z) \geq c_\delta \eta(x)$ where $c_\delta = 1-\frac{2L\delta}{\varphi(0)}$ and $\alpha_\delta=1$.

For the second case let $B(0,2m) \subset E \subset B(0,M)$ and $\delta^* = \mathrm{dist}_H(\partial E, B(0,m))$ where $\mathrm{dist}_H$ is the Hausdorff distance.
Clearly $\delta^* \geq m >0$.
Let
\[ \partial_{\delta^*} E = \left\{ x \in E \cup \partial E \, : \, \mathrm{dist}_H(x,\partial E) \leq \frac{\delta^*}{2} \right\}. \]
Note that if $x\in \partial_{\delta^*} E$ then $\mathrm{dist}_H(x,B(0,m)) \geq \frac{\delta^*}{2}$.
For any $x \in \partial_{\delta^*} E$ there exists a unique (by convexity) $\beta_x\geq 1$ such that $\beta_x x \in \partial E$.
Furthermore, $\beta_x = \frac{|\beta_x x|}{|x|} \leq \frac{M}{m}$.

Let $\delta \leq \delta^*$ and pick $z \in \partial_\delta^+ := \left\{ x \in E^c \, : \, \mathrm{dist}_H(z,\partial E) \leq \delta \right\}$ then for any $x\in B(z,\delta) \cup E$ we have
\begin{align*}
\beta_x - 1 & = \frac{1}{|x|} \left( \beta_x|x| - |z| + |z| - |x| \right) \\
 & \leq \frac{1}{m} \left( |\beta_x - x | + |z-x| \right) \\
 & \leq \frac{1}{m} \left( |\beta_x - x | + \delta \right).
\end{align*}
Now we construct the triangle given in Figure~\ref{fig:MainRes:Triangle}.
Applying the cosine formula one has,
\begin{align*}
\cos(\theta) & = \frac{|z|^2 + |x|^2 - |z-x|^2}{2|x||z|} \\
 & = 1 + \frac{|z|^2 + |x|^2 - |z-x|^2 - 2|x||z|}{2|x||z|} \\
 & \geq 1 - \frac{|z-x|^2}{2|x||z|} \\
 & \geq 1 - \frac{\delta^2}{2m^2}.
\end{align*}
And
\begin{align*}
|\beta_x x - z|^2 & = |z|^2 + |\beta_x x|^2 - 2|z| |\beta_x x| \cos(\theta) \\
 & \leq |z|^2 + |\beta_x x|^2 - 2|z| |\beta_x x| + \frac{|z| |\beta_x x| \delta^2}{m^2} \\
 & \leq \frac{|z| |\beta_x x|\delta^2}{m^2} \\
 & \leq \frac{M^3\delta^2}{m^3}.
\end{align*}
Therefore
\[ \beta_x - 1 \leq \frac{\delta}{m} \left( \left(\frac{M}{m}\right)^{\frac{3}{2}} + 1 \right). \]
Take $\alpha_\delta = \sup_{z\in \partial_{\delta}^+ E} \sup_{x\in B(z,\delta)\cap E} \beta_x$.
Then $1\leq \alpha_\delta \leq 1+ \frac{\delta}{m} \left( \left(\frac{M}{m}\right)^{\frac{3}{2}} + 1 \right) \to 1$ and by construction for any $x\in B(z,\delta)\cap E$ we have $\alpha x \not\in E$.
This implies $\eta(\alpha_\delta x) = 0$ and therefore $\eta(z) \geq \eta(\beta_x x)$.

The other cases, $x \in B(z,\delta) \cap E^c$ and when $z \in E$ or $\mathrm{dist}_H(z,\partial E) > \delta$ are trivial.
\begin{figure}
\centering
\scriptsize
\setlength\figureheight{5cm}
\setlength\figurewidth{5cm}
\begin{tikzpicture}
\begin{axis}[%
width=\figurewidth,
height=\figureheight,
scale only axis,
xmin=0,
xmax=8,
ymin=0,
ymax=8,
hide axis,
]
\draw (axis cs: 1,4) node[left] {A} -- (axis cs: 4,5) node[midway,below,xshift=3] {$|z-x|$} node[right] {C};
\draw (axis cs: 4,1) node[below] {O} -- (axis cs: 1,4) node[midway,below,xshift=-3] {$|z|$};
\draw (axis cs: 4,1) -- (axis cs: 4,5) node[midway,right] {$|x|$};
\draw (axis cs: 1,4) -- (axis cs: 4,7) node[midway,above,xshift=-15] {$|\beta_x x-z|$} node[above] {B};
\draw (axis cs: 4,5) -- (axis cs: 4,7) node[midway,right] {$(\beta_x-1)|x|$};
\addplot [
color=black,
solid,
forget plot
]
table[row sep=crcr]{
4.0000 1.5000 \\
3.9608 1.4985 \\
3.9218 1.4938 \\
3.8833 1.4862 \\
3.8455 1.4755 \\
3.8087 1.4619 \\
3.7730 1.4455 \\
3.7388 1.4263 \\
3.7061 1.4045 \\
3.6753 1.3802 \\
3.6464 1.3536 \\
};
\draw (axis cs: 3.8,1.7) node {$\theta$};
\end{axis}
\end{tikzpicture}
\caption{
Bound for $|\beta_x x - z|$ in the proof of Proposition~\ref{prop:MainRes:SatAss}.
}
\label{fig:MainRes:Triangle}
\end{figure}
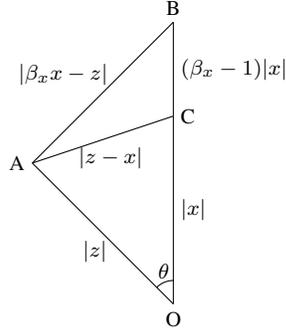
\end{proof}

\section{Proof of Corollary~\ref{cor:MainRes:Isotropic} \label{app:ProofCorMainResIsotropic}}

\begin{proof}[Proof of Corollary~\ref{cor:MainRes:Isotropic}]
The compactness property holds analogously to Proposition~\ref{prop:Compact:ComSecondTermData}.
For the $\Gamma$-convergence we let $\eta=\varphi\circ\pi=\varphi(|x|)$.
Since $\varphi$ is in $L^1([0,\infty))$ (it is bounded, measurable and with compact support) then we can approximate $\varphi$ by $\varphi_k$ a monotonically increasing sequence (in $k$) of functions such that $0\leq \varphi_k\leq \varphi$, $\varphi_k\to \varphi$ pointwise, $\varphi_k$ is Lipschitz, decreasing and $\varphi_k>0$.
By Proposition~\ref{prop:MainRes:SatAss} and Theorem~\ref{thm:MainRes:Cons} for any $\mu\in L^1(X)$ there exists a sequence $\mu_n$ such that $\mu_n\to \mu$ in $TL^1$ and $\limsup_{n\to \infty} \mathcal{E}_n(\mu_n;\varphi_k\circ\pi) \leq \mathcal{E}_\infty(\mu;\varphi_k\circ\pi)$.
Therefore,
\begin{align*}
\limsup_{n\to \infty} \mathcal{E}_n(\mu_n;\varphi\circ\pi) & \geq \limsup_{n\to \infty} \mathcal{E}_n(\mu_n;\varphi_k\circ\pi) \\
 & \geq \mathcal{E}_\infty(\mu;\varphi_k\circ\pi) \\
 & \to \mathcal{E}_\infty(\mu;\varphi\circ\pi) \quad \text{as } k\to \infty \text{ by the monotone convergence theorem.}
\end{align*}

Similarly for the liminf inequality we take a sequence $\varphi_k$ monotonically decreasing sequence of functions such that $\varphi_k\geq \varphi$, $\varphi_k\to \varphi$ pointwise, $\varphi_k$ is Lipschitz, decreasing and with compact support.
Then for any $\mu_n\to \mu$ in $TL^1$ we have
\begin{align*}
\liminf_{n\to \infty} \mathcal{E}_n(\mu_n;\varphi\circ\pi) & \leq \liminf_{n\to \infty} \mathcal{E}_n(\mu_n;\varphi_k\circ\pi) \\
 & = \mathcal{E}_\infty(\mu;\varphi_k\circ\pi) \\
 & \to \mathcal{E}_\infty(\mu;\varphi\circ\pi) \quad \text{as } k\to \infty \text{ by the monotone convergence theorem.}
\end{align*}
This completes the proof.
\end{proof}

\nocite{baldi01,chambolle11,bresson13,jylha15,ambrosio03,strogatz01,trillos14a,vangennip14,garcia15}
\bibliographystyle{plain}
\bibliography{references}

\end{document}